\newtheorem{theorem}{Theorem}[section]
\newtheorem{lemma}[theorem]{Lemma}
\newtheorem{proposition}[theorem]{Proposition}
\theoremstyle{definition}
\newtheorem{example}[theorem]{Example}
\theoremstyle{remark}
\newtheorem{remark}[theorem]{Remark}
\numberwithin{equation}{section}
\newcommand{\R}{\ensuremath{\mathbb{R}}}
\newcommand{\N}{\ensuremath{\mathbb{N}}}
\newcommand{\sd}{\mathbf{d}}
\renewcommand{\sc}{\mathbf{c}}
\newcommand{\set}[1]{\left\{#1\right\}}
\newcommand{\la}{\lambda}
\newcommand{\ep}{\varepsilon}
\newcommand{\f}{\infty}
\newcommand{\om}{\omega}
\newcommand{\al}{\alpha}
\newcommand{\si}{\sigma}
\newcommand{\diam}{\operatorname{diam}}
\begin{document}

\title{Open dynamical  systems with a moving  hole}

\author[D. Kong]{Derong Kong}
\address[D. Kong]{College of Mathematics and Statistics, Center of Mathematics, Chongqing University, Chongqing, 401331, P.R.China}
\email{derongkong@126.com}

\author[B. Sun]{Beibei Sun}
\address[B. Sun]{College of Mathematics and Statistics,   Chongqing University, Chongqing, 401331, P.R.China}
\email{beibeisun9@163.com}

\author[Z. Wang]{Zhiqiang Wang}
\address[Z. Wang]{College of Mathematics and Statistics, Center of Mathematics, Key Laboratory of Nonlinear Analysis and its Applications (Ministry of Education), Chongqing University, Chongqing 401331, P.R.China}
\email{zhiqiangwzy@163.com,~zqwangmath@cqu.edu.cn}

\dedicatory{}


\subjclass[2010]{Primary:28A80, 37B10, 28A78, Secondary: 11A63, 11J83, 37E05,11K60, 15A60}

\begin{abstract}
  Given an integer $b\ge 3$, let  $T_b: [0,1)\to [0,1); x\mapsto bx\pmod 1$ be the expanding map on the unit circle.  For any $m\in\N$ and for any $\om=\om^0\om^1\ldots\in(\set{0,1,\ldots,b-1}^m)^{\N_0}$ let
 \[
   K^\om=\set{x\in[0,1): T_b^n(x)\notin I_{\om^n}~\forall n\ge 0},
 \]
 where $I_{\om^n}$ is the $b$-adic basic interval generated by $\om^n$. Then $K^\om$ is called the survivor set of the open dynamical system $([0,1), T_b, I_\om)$ with respect to the sequence of  holes $I_\om=\set{I_{\om^n}: n\ge 0}$. We show that the Hausdorff and lower box dimensions of $K^\om$ always coincide, and the packing and upper box dimensions of $K^\om$ also coincide. Moreover, we give sharp lower  and upper bounds for the dimensions of $K^\om$, which can be calculated explicitly. Finally, for any admissible $\al\le \beta$ we show that there exist infinitely many $\om$ (in fact of positive dimension) such that $\dim_H K^\om=\alpha$ and $\dim_P K^\om=\beta$.

 As applications we first study badly approximable numbers in Diophantine approximation.  For an arbitrary sequence of balls  $\set{B_n}$ on $[0,1)$, let $K\left(\set{B_n}\right)$ be the set of $x\in[0,1)$ such that  $T_b^n(x)\notin B_n$ for all but finitely many $n\ge 0$.
  Assuming   the limit $\lim_{n\to\f}\diam \left(B_n\right)$ exists, we show that  $\dim_H K\left(\set{B_n}\right)=1$ if and only if $\lim_{n\to\f}\diam  \left(B_n\right)=0$. Similarly, our results can be applied to  the set of non-recurrence points. For any positive function $\phi$  on $\mathbb{N}$, let $E\left(\phi\right)$ be the set of $x\in[0,1)$ satisfying  $|T_b^n (x)-x|\ge \phi(n)$ for all but finitely many $n$. Then we prove, under the existence of the limit $\lim_{n\to\f}\phi(n)$, that $\dim_H E(\phi)=1$ if and only if $\lim_{n\to\f}\phi(n)=0$.   Our results can also be applied to study  joint spectral radius of matrices.  We show that the finiteness property for the joint spectral radius of associated adjacency  matrices holds true.
\end{abstract}

\keywords{Open dynamical system; Hausdorff dimension; packing dimension; badly approximable numbers; non-recurrence points; joint spectral radius}

\maketitle

\section{Introduction}

The mathematical study of dynamical systems with holes, called \emph{open dynamical systems}, was first proposed by Pianigiani and Yorke \cite{Pianigiani-Yorke-1979}. Let $(X, T, \mu)$ be an ergodic dynamical system on    a compact metric space $X$, and let $H\subset X$ be the \emph{hole}  with $\mu(H)>0$. For $n\in\N$ let $P_s(n)$ be the survivor probability defined by $P_s(n):=\mu\left(\set{x\in X: T^j (x)\notin H~\forall 0\le j\le n}\right)$.  In open dynamical systems people always study the decay rate of $P_s(n)$, called the \emph{escape rate}, given by $\rho(H):=\lim_{n\to\f}-\frac{1}{n}\log P_s(n)$. For more information on the escape rate and the related first hitting time we refer to the papers \cite{Bruin-Demers-Todd-2018,  Demers-2005, Demers-Wright-Young-2012, Demers-Young-2006} and the references therein. Note by Birkhoff ergodic theorem (cf.~\cite{Walters-82}) that the survivor set
\[
K(H):=\set{x\in X: T^n (x)\notin H~\forall n\in \mathbb{N}_0}
\]
is a $\mu$-null set. Then the fractal dimension, such as Hausdorff dimension, is applied   to estimate the size of $K(H)$. It turns out that the dimension of $K(H)$ depends not only on the size  but also on the position of $H$ (cf.~\cite{Bunimovich-Yurchenko-2011}). For more results on the dimension of $K(H)$ we refer to the papers \cite{Allaart-Kong-2023, Glendinning-Sidorov-2015, Kalle-Kong-Langeveld-Li-2020, Urbanski-1986}. Note that the study of   $K(H)$ has intimate  connection with unique $\beta$-expansions (cf.~\cite{AlcarazBarrera-Baker-Kong-2016, Allaart-Kong-19, deVries-Komornik-09, Glendinning_Sidorov_2001, Komornik-Kong-Li-2017, Sidorov_2007})  and  badly approximable numbers in Diophantine approximation (cf.~\cite{Dani-88, Farm-Persson-Schmeling-2010, Li-Liao-Velani-Zorin-23, Nielsen_1999}).

 Until now, in most work of open dynamical systems the hole $H$ was assumed to be fixed. In this paper  we will study the survivor set of an open dynamical system with a moving hole. In particular, the hole moves when the time $n$ changes. Our study  involves  a new type of fractals   including the Moran type fractals and graph-directed fractals as special cases. Our results can be applied to  the study of liminf sets in dynamical Diophantine approximations and to the finiteness property for the joint spectral radius of matrices.

\subsection{Open dynamical systems with a moving hole}\label{subsec:open-dynamics}
Let $\N:=\{1,2,3,\ldots\}$ be the set of all positive integers, and let $\N_0 := \N \cup \{0\}$.
For $k \in \N$ let $\N_{\ge k}:= \N \cap [k,+\f)$.

Given $b\in \mathbb{N}_{\geq  3}$, let $D_b:=\left\{0,1,\ldots,b -1\right\}$. For $n\in\N$ let $D_b^n$ be the set of all words of length $n$ over the alphabet $D_b$, i.e.,
 $D_b^n = \{d_1d_2 \ldots d_n : d_i \in D_b~\forall 1 \leq i \leq n\}.$
Let $D_b^*:=\bigcup_{n=1}^\f D_b^n$ be the set of all finite words over the alphabet $D_b$. Moreover,
let $D_b^\N :=\{(d_i)_{i=1}^\f: d_i \in D_b~\forall i \in \N\}$ denote the set of all infinite sequences over the alphabet $D_b$, and let $\sigma$ be the left shift map on $D_b^\N$. Then for $(d_i)_{i=1}^\f\in D_b^\N$ we have $\si((d_i)_{i=1}^\f)=(d_{i+1})_{i=1}^\f \in D_b^\N$. For a finite word $\sd=d_1d_2\ldots d_n\in D_b^*$, we define the cylinder set by $\mathcal{C}_\sd:=\{(\omega_i)_{i =1}^\f \in D_b^\N: \omega_i = d_i~ \forall 1 \le i \le n\}$.

Given $ m \in \mathbb{N} $, write $(D_b^m)^{\N_0}$ for the set of all infinite sequences $\om=\om^0\om^1\ldots$ with each $\om^k\in D_b^m$. For $\om=\om^0\om^1\ldots\in(D_b^m)^{\N_0}$ we first define the \emph{survivor set} of the symbolic dynamical system $(D_b^\N, \sigma)$ with the moving holes $\{\mathcal{C}_{\omega^k}: k \in \N_0\}$ by
\[ \Sigma^\om:=\big\{ (d_i)_{i=1}^\f \in D_b^\N: \sigma^k\big( (d_i)_{i=1}^\f \big) \not \in \mathcal{C}_{\omega^k}~\forall k \in \N_0 \big\}. \]
That is,
\begin{equation}\label{eq:survivor-set-symbolic}
  \Sigma^\om=\big\{ (d_i)_{i=1}^\f \in D_b^\N: d_{k+1}d_{k+2} \ldots d_{k+m} \ne \omega^k ~ \forall k \in \N_0 \big\}.
\end{equation}

Let $\pi_b: D_b^\N \to [0,1]$ be the projection map defined by $\pi_b\big( (d_i)_{i=1}^\f \big):= \sum_{i=1}^{\f} d_i b^{-i}$.
We are  interested in the survivor set
\begin{equation*}\label{eq:survivor-set-K-def}
  K^\om:=\pi_b(\Sigma^\om).
\end{equation*}
It is closely related to the survivor set of the open dynamical system $([0,1), T_b)$ with the moving holes $\big\{ \mathrm{int}\big( \pi_b(\mathcal{C}_{\omega^k}) \big): k \in \N_0\}$,  where $T_b: [0,1)\to [0,1);~ x\mapsto bx \pmod{1}$. Observe  that up to a countable set the map $\pi_b$ is  conjugate   from $(D_b^\N, \si)$ to $([0,1), T_b)$ (cf.~\cite{Lind_Marcus_1995}). So,
the survivor set
\begin{equation*}\label{eq:survivor-set-K-tilde}
  \widetilde{K}^\om :=\big\{ x \in [0,1): T_b^k(x) \not\in \mathrm{int}\big( \pi_b(\mathcal{C}_{\omega^k}) \big)~\forall k \in \N_0 \big\}
\end{equation*}
differs from $K^\om$ by at most countably many points. Since we are interested in the fractal dimension of $K^\om$, in the following we focus on  $K^\om$ which
 has a more explicit geometric construction
\begin{equation}\label{eq:survivor-set-K}
K^{\om}=\bigcap_{k=1}^\f \bigcup_{\sd \in \Sigma_k^{\om}}I_{\sd},
\end{equation}
where $I_\sd := \pi_b(\mathcal{C}_{\sd})$, and
\begin{equation}\label{eq:Sigma-k-om-def}
\Sigma_k^{\om}:=\big\{ d_1\ldots d_k: (d_i)_{i=1}^\f \in \Sigma^\om \big\}.
\end{equation}
Since $b\in \mathbb{N}_{\geq 3}$,  the survivor set $K^{\om}$ is a Cantor set, i.e., $K^\om$ is a compact set with neither interior nor isolated points.
Our work involves determining   various fractal dimensions of the survivor set $K^{\om}$.

For a bounded set $F\subset\R^n$, let $\dim_H F, ~\dim_P F$ denote its Hausdorff  and packing dimensions; let $\underline{\dim}_B F, ~\overline{\dim}_B F$ denote its lower and upper box dimensions (cf.~\cite{Falconer-14}); and let $\dim_L F,~\dim_A F$ denote its lower dimension and Assouad dimension (cf.~\cite{Fraser-2021}).
For a compact set $F\subset \R^n$, these fractal dimensions have the following relation (cf. \cite[Lemma 2.4.3, Theorem 3.4.3]{Fraser-2021})
\begin{equation}\label{eq:fractal-dimension}
  \dim_L F \leq \dim_H F \leq\min\set{\underline{\dim}_B F,\dim_P F}\le \max\set{\underline{\dim}_B F,\dim_P F} \le \overline{\dim}_B F\leq \dim_A F.
\end{equation}
In general, the lower box dimension and the packing dimension are incomparable.
If $F$ is $s$-Ahlfors regular, then all fractal dimensions in (\ref{eq:fractal-dimension}) are equal to $s$.
Recall that a compact set $F \subset \R^n$ is called \emph{$s$-Ahlfors regular} if there exists a constant $c \ge 1$ such that for all $0< r < 1$ and all $x \in F$,
\[ c^{-1} r^s \le \mathcal{H}^s\big( F \cap B(x,r) \big) \le c r^s,\]
where $\mathcal{H}^s$ is the $s$-dimensional Hausdorff measure, and $B(x,r)$ is the closed ball centered at $x$ with radius $r$.

When $m=1$, the set $K^\om$ is a homogeneous Moran set. Its Hausdorff  and packing dimensions were given by Feng et al.~\cite{Feng-Wen-Wu-97}, and its Assouad  and lower dimensions were given by Li et al.~\cite{Li-Li-Miao-Xi-2016} and Chen et al.~\cite{Chen-Wu-Wei-2017}, respectively.
In this case, all of these fractal dimensions of $K^\omega$ are equal to $\log(b-1)/ \log b$.
So in the following we always assume $m\ge 2$.

When $\omega^k = \sd$ for all $k \in \N_0$, we have $(\Sigma^\om,\sigma)$ is the shift of finite type with the forbidden word $\sd$, denoted by $X_\sd$ (cf.~\cite{Lind_Marcus_1995}).
It can be described by a directed graph $G_\sd=(V,E)$, where $V=D_b^{m-1}$, and for $\mathbf u=u_1\ldots u_{m-1}, \mathbf v=v_1\ldots v_{m-1}\in V$ we say $(\mathbf u,\mathbf v) \in E$ if $u_2\ldots u_{m-1}=v_1\ldots v_{m-2}$ and $u_1\ldots u_{m-1}v_{m-1}\ne \sd$.
Let $A_\sd$ be the corresponding adjacency matrix of $G_\sd$, whose size is $b^{m-1}\times b^{m-1}$.
In this case, all fractal dimensions in (\ref{eq:fractal-dimension}) of $K^\omega = \pi_b(X_\sd)$ are equal to $\log \rho(A_\sd) / \log b$, where $\rho(A)$ is the spectral radius of a matrix $A$ (cf.~\cite{Mauldin_Williams_1988}). To the best of our knowledge, $K^\om$ is a new type of fractal set for a general $\om=\om^0\om^1\ldots\in (D_b^m)^{\N_0}$.

For a real matrix $A=(a_{i,j})$ we use the norm $\| A  \|=\sum_{i,j}|a_{i,j}|$.
For a finite set $F$ let $|F|$ denote its cardinality.
In general, we obtain the following results concerning the fractal dimensions of $K^\om$.
\begin{theorem}\label{general-result}
For any $\om=\om^0\om^1\ldots\in (D_b^m)^{\mathbb{N}_0}$  we have
 $$\dim_H K^{\om} = \underline{\dim}_B K^{\om} = \liminf_{n \to \f} \frac{\log |\Sigma_n^{\om}|}{n \log b }=\liminf_{n\to\infty}\frac{1}{n \log b}\log \|A_{\om^0}A_{\om^1}\cdots A_{\om^{n-1}}\|,$$
 and
 $$\dim_P K^{\om} = \overline{\dim}_B K^{\om} = \limsup_{n \to \f} \frac{\log |\Sigma_n^{\om}|}{n \log b}=\limsup_{n\to\infty}\frac{1}{n \log b}\log\|A_{\om^0}A_{\om^1}\cdots A_{\om^{n-1}}\|.$$
\end{theorem}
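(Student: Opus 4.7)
My plan splits the theorem into four ingredients. The matrix identity $|\Sigma_{n+m-1}^\om|=\|A_{\om^0}\cdots A_{\om^{n-1}}\|$ will come from the bijection between admissible length-$(n+m-1)$ words and length-$n$ paths on the $(m-1)$-block state space: a word $d_1\dots d_{n+m-1}$ with $d_{k+1}\dots d_{k+m}\ne\om^k$ for $0\le k\le n-1$ is precisely a path $\mathbf s_k=d_{k+1}\dots d_{k+m-1}$ obeying $(A_{\om^k})_{\mathbf s_k,\mathbf s_{k+1}}=1$. Since $b\ge 3$ leaves at least $b-1\ge 2$ free continuation letters at every future step, every such finite admissible word extends to some infinite $\sd\in\Sigma^\om$, and the shift $n\mapsto n+m-1$ is absorbed in the $\log(\cdot)/(n\log b)$ liminf/limsup. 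For the two box dimensions, the cover $K^\om\subset\bigcup_{\sd\in\Sigma_n^\om}I_\sd$ yields $N_{b^{-n}}(K^\om)\le|\Sigma_n^\om|$, while (using $b\ge 3$) any interval of length $b^{-n}$ meets at most two level-$n$ basic intervals and each $I_\sd$ with $\sd\in\Sigma_n^\om$ contains a $K^\om$-point, so $N_{b^{-n}}(K^\om)\ge|\Sigma_n^\om|/2$. Combined with the general inequalities $\dim_H\le\underline\dim_B$ and $\dim_P\le\overline\dim_B$, it remains to prove matching lower bounds on $\dim_H$ and $\dim_P$.

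For the Hausdorff lower bound, fix $s<\liminf\log|\Sigma_n^\om|/(n\log b)$, so $|\Sigma_n^\om|\ge b^{ns}$ eventually. I will construct a Borel probability measure $\mu$ on $K^\om$ satisfying the uniform cylinder bound $\mu(I_\sd)\le C/|\Sigma_n^\om|$ for every $\sd\in\Sigma_n^\om$ and every $n$, where $C$ depends only on $b$ and $m$. The mass-distribution principle then gives $\mu(U)\le C'|U|^s$ for $|U|$ small (since each $U$ meets $O(1)$ level-$n$ cylinders at scale $b^{-n}\asymp|U|$), so $\dim_HK^\om\ge s$. The construction of $\mu$ is the technical heart: the naive equal-share-to-children measure can assign cylinder mass as large as $(b-1)^{-n}$, missing the target by a factor $(b/(b-1))^n$ when $|\Sigma_n^\om|$ grows near $b^n$. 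The fix is to weight each split by the long-term descendant count, which rests on the submultiplicative estimate
\begin{equation*}
  c\,|\Sigma_n^\om|\cdot|\Sigma_N^{\sigma^n\om}|\le|\Sigma_{n+N}^\om|\le|\Sigma_n^\om|\cdot|\Sigma_N^{\sigma^n\om}|
\end{equation*}
obtained by splitting an $(n+N)$-admissible word at position $n$ and observing that the cross-constraints between prefix and suffix forbid only $m-1$ extra prefix-patterns on the suffix.

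The packing lower bound $\dim_PK^\om\ge\overline\dim_BK^\om$ is the main obstacle. The scale-adapted counting measures from the Hausdorff step do not combine into a single measure with controlled \emph{upper} local dimensions $\mu$-a.e., so the criterion $\dim_P\mathrm{supp}\,\mu\ge\operatorname{ess\,sup}\overline d_\mu$ is not directly applicable. My plan is to use the characterisation $\dim_PF=\inf\{\sup_i\overline\dim_BF_i:F\subset\bigcup_iF_i\}$ together with the shift-stability $\overline\dim_B(K^\om\cap I_\sd)=\overline\dim_BK^\om$ for every non-empty cylinder $I_\sd$, which via the bi-Lipschitz map $T_b^n\colon I_\sd\to[0,1)$ reduces to $\overline\dim_BK^{\sigma^n\om}=\overline\dim_BK^\om$ and then to the submultiplicative estimate above. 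With stability in hand, for any countable decomposition $K^\om=\bigcup_iF_i$ the Baire category theorem produces some $\overline{F_i}$ containing a cylinder piece $K^\om\cap I_\sd$, whence $\overline\dim_BF_i\ge\overline\dim_BK^\om$ and the desired lower bound follows. The hard part will be establishing the submultiplicative lower bound with a constant $c=c(b,m)$ that is uniform in both $\sd$ and $\om$: there are only $b^{m-1}$ possible tails and the cross-constraints forbid only $O(m)$ initial patterns, but making this quantitative when $|\Sigma_N^{\sigma^n\om}|$ is small relative to $b^N$ demands a case-by-case count, and if a uniform $c$ cannot be extracted one must fall back on a pigeonhole argument producing a nested sequence of ``good'' cylinders at which the full upper box dimension is realised in the limit.
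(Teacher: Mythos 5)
Your proposal follows essentially the same route as the paper: the matrix identity via counting paths in the de Bruijn--type graph, box dimensions from the cylinder count, a Frostman-type lower bound for $\dim_H$, and a Baire-category/localization argument for $\dim_P$. The load-bearing ingredient in both is the per-cylinder estimate that, for $\sc\in\Sigma_k^\om$, the descendant count $|\Sigma_{k+n}^\om(\sc)|$ is within a factor $b^{\pm(m-1)}$ of the average $|\Sigma_{k+n}^\om|/|\Sigma_k^\om|$ (the paper's Lemma~\ref{lem:key-estimate}). You phrase the Hausdorff step via a measure satisfying $\mu(I_\sd)\le C/|\Sigma_n^\om|$ and the mass-distribution principle, whereas the paper bounds covers directly; these are equivalent and both rest on the same per-cylinder estimate. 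For packing, the paper invokes \cite[Cor.~3.10]{Falconer-14} (whose proof is the Baire-category argument you sketch) together with the same lemma.

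Two points where your sketch needs repair. First, the shift reduction $T_b^n(K^\om\cap I_\sd)=K^{\sigma^n\om}$ does not hold as an equality: the image is in general a \emph{proper} subset of $K^{\sigma^n\om}$, because admissible tails of $K^{\sigma^n\om}$ need not concatenate with $\sd$ without violating one of the $m-1$ constraints that straddle the boundary. The bi-Lipschitz map therefore only yields the wrong-direction inequality $\overline{\dim}_B(K^\om\cap I_\sd)\le\overline{\dim}_B K^{\sigma^n\om}$. Second, your heuristic that the cross-constraints ``forbid only $m-1$ extra prefix-patterns on the suffix'' undercounts: each of those $m-1$ constraints forbids a \emph{family} of suffixes (all suffixes with a given short prefix), and in the worst case these families could exhaust most of $\Sigma_N^{\sigma^n\om}$, so one cannot conclude a uniform $c$ by that count alone. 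The clean fix for both is what the paper does in Lemma~\ref{lem:key-estimate}: insert a connector block $u_1\ldots u_{m-1}$ chosen to clear all boundary constraints, which is possible precisely because $b\ge3$ leaves at least one legal digit at each connector position; then $\sc\, u\, \sd\in\Sigma_{k+n}^\om$ for \emph{every} $\sd$ admissible for the shifted sequence, giving $|\Sigma_{k+n}^\om(\sc)|\ge|\Sigma^{\tau}_{n-m+1}|$ with $\tau=\sigma^{k+m-1}\om$, and also yielding a sub-cylinder $I_{\sc u}\subset I_\sd$ on which the $T_b^{k+m-1}$-image equals $K^{\tau}$ exactly. That connector trick is exactly what produces the uniform constant $b^{-(m-1)}$ you were worried you might not be able to extract, so the pigeonhole fallback you mention is unnecessary.
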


\begin{remark}\label{rem:dim-equal}
  By a recent work of Fan and Wu \cite{Fan-Wu-2022} it follows that for any ergodic measure $\nu$ on $((D_b^m)^{\N_0}, \tilde\sigma)$ we have
  \[
  \dim_H K^\om=\dim_P K^\om=\lim_{n\to\f}\frac{1}{n\log b}\log \|A_{\om^0}A_{\om^1}\cdots A_{\om^{n-1}}\|\quad\textrm{for $\nu$-a.e. }\om\in (D_b^m)^{\N_0},
  \]
  where $\tilde\si(\om)=\om^1\om^2\ldots$ for $\om=\om^0\om^1\ldots\in (D_b^m)^{\N_0}$.
\end{remark}

To describe the range of  $\dim_H K^\om$ and $\dim_P K^\om$  we introduce two types of sequences  in $(D_b^m)^{\N_0}$, namely the progressively overlapping sequences and the totally distinct sequences.
Let $\om=\om^0\om^1\ldots\in (D_b^m)^{\mathbb{N}_0}$ with $\om^i= \om^i_{1} \ldots \om^i_m \in D_b^m$ for all $i \in \N_0$. For $k \in \N$, we say that the sequence $\om$ is \emph{progressively overlapping at position-$k$} if \[ \omega^k_1 \ldots \omega^k_{m-j} = \omega^{k-j}_{j+1} \ldots \omega^{k-j}_m\quad \forall 1 \le j \le \min\{k,m-1\}. \]
The sequence $\om$ is called \emph{progressively overlapping} if it is progressively overlapping at position-$k$ for all $k\in \mathbb{N}$, that is, $\omega^i_1 \ldots \omega^i_{m-1} = \omega^{i-1}_2 \ldots \omega^{i-1}_{m}$ for all $i \in \N$.
Each sequence $d_1 d_2 \ldots \in D_b^\N$ induces a progressively overlapping sequence $\om=\om^0\om^1\ldots\in (D_b^m)^{\mathbb{N}_0}$ by letting $\om^i=d_{i+1} \ldots d_{i+m}$ for all $i \in \N_0$.

In contrast with progressively overlapping sequences we define the totally distinct sequences.
We say that the sequence $\om$ is \emph{totally distinct at position-$k$} if \[ \omega^k_1 \ldots \omega^k_{m-j} \ne \omega^{k-j}_{j+1} \ldots \omega^{k-j}_m\quad \forall 1 \le j \le \min\{k,m-1\}. \]
The sequence $\om$ is called \emph{totally distinct} if it is totally distinct at position-$k$ for all $k\in \mathbb{N}$.

\begin{example}
  Let $b\in\N_{\ge 3}$ and $m=2$. For any $d_1d_2d_3\ldots \in D_b^\N$ let $\om^i=d_{i+1}d_{i+2}$ for all $i\in\N_0$. Then $\om=\om^0\om^1\ldots\in(D_b^m)^{\N_0}$ is a progressively overlapping sequence.
  Similarly, let $\tau^i=c_{i+1}d_{i+2}$ with $c_{i+1}\ne d_{i+1}$ for all $i\in\N_0$. Then $\tau=\tau^0\tau^1\ldots\in (D_b^m)^{\N_0}$ is a  totally distinct sequence.
So, there exist uncountably many progressively overlapping sequences and uncountably many totally distinct sequences.
\end{example}

When $\om\in(D_b^m)^{\N_0}$ is a progressively overlapping or a totally distinct sequence, the dimensions   in (\ref{eq:fractal-dimension}) are all equal for $K^\om$.
\begin{theorem}\label{thm:H=P=B=AOP}\mbox{}

\begin{enumerate}[{\rm(i)}]
  \item  If $\om \in (D_b^m)^{\mathbb{N}_0}$ is a progressively overlapping sequence, then the set $K^\om$ is $\frac{\log \la}{\log b}$-Ahlfors regular,
where $\la\in(b-1, b)$ is  Pisot number satisfying  the equation $x^m -(b-1)(x^{m-1}+x^{m-2} + \cdots +x+1)=0$.

  \item   If $\om \in (D_b^m)^{\mathbb{N}_0}$ is a totally distinct sequence, then the set $K^\om$ is $\frac{\log \eta}{\log b}$-Ahlfors regular,
  where $\eta\in(b-1, b)$ is the Pisot number satisfying the equation $x^m -bx^{m-1} +1=0$.
\end{enumerate}

\end{theorem}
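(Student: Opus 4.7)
Both (i) and (ii) follow the same scheme: (a) derive a position-independent recursion for the sub-cylinder counts $N_n^{(\sd)}:=|\{\mathbf{e}\in\Sigma_{k+n}^\om:e_1\cdots e_k=\sd\}|$ for $\sd\in\Sigma_k^\om$; (b) identify its exponential growth rate with the Pisot root $r$ of the stated characteristic polynomial; (c) show the uniform bound $N_n^{(\sd)}\asymp r^n$; and (d) build a Borel probability measure $\mu$ on $K^\om$ with $\mu(I_\sd)\asymp r^{-|\sd|}$, from which Ahlfors regularity is immediate by the standard ball--cylinder comparison: for $x\in K^\om$ and $b^{-(k+1)}\le\varrho<b^{-k}$, $B(x,\varrho)$ meets $O(1)$ level-$k$ cylinders and contains a level-$(k+2)$ cylinder, so $\mu(B(x,\varrho)\cap K^\om)\asymp r^{-k}\asymp\varrho^{\log r/\log b}$.

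For (i), progressively overlapping is equivalent to writing $\om^i=d_{i+1}\cdots d_{i+m}$ for some $(d_i)\in D_b^\N$, and $e_1\cdots e_n\in\Sigma_n^\om$ iff no $m$ consecutive indices satisfy $e_i=d_i$. Tracking the length of the current ``match suffix'' with $d$ as a state in $\{0,1,\ldots,m-1\}$ yields an $m\times m$ primitive non-negative transition matrix $M$, with entries $b-1$ on the top row and a single $1$ on the subdiagonal, independent of $(d_i)$. Its characteristic polynomial is exactly $x^m-(b-1)(x^{m-1}+\cdots+1)$; the sign check $p(b-1)<0<p(b)=1$ places the Perron root $\lambda\in(b-1,b)$, which is Pisot by the standard argument. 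The positive right eigenvector $v=(v_0,\ldots,v_{m-1})$ satisfies $\lambda v_{m-1}=(b-1)v_0$ and $\lambda v_j=v_{j+1}+(b-1)v_0$ for $j\le m-2$, which is precisely the consistency condition for the assignment $\mu(I_\sd):=\kappa\lambda^{-|\sd|}v_{s(\sd)}$, where $s(\sd)$ denotes the state of $\sd$; since $\min_j v_j>0$ we get $\mu(I_\sd)\asymp\lambda^{-|\sd|}$, and Ahlfors regularity follows.

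For (ii), the totally distinct hypothesis yields the scalar recurrence $N_n^{(\sd)}=bN_{n-1}^{(\sd)}-N_{n-m}^{(\sd)}$ for all $n\ge m$. The argument: of the $bN_{n-1}^{(\sd)}$ length-$(k+n)$ extensions whose length-$(k+n-1)$ prefix lies in $\Sigma_{k+n-1}^\om$, subtract those with the last $m$ letters equal to $\om^{k+n-m}$. The bad count is exactly $N_{n-m}^{(\sd)}$, because attaching the fixed tail $\om^{k+n-m}_1\cdots\om^{k+n-m}_m$ to any valid length-$(n-m)$ extension of $\sd$ automatically respects each intermediate constraint at position $k+n-m-j$ ($1\le j\le\min(k+n-m,m-1)$): the totally distinct condition gives $\om^{k+n-m}_1\cdots\om^{k+n-m}_{m-j}\ne\om^{k+n-m-j}_{j+1}\cdots\om^{k+n-m-j}_m$, so the last $m-j$ letters of the fixed tail alone already differ from those of $\om^{k+n-m-j}$. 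The characteristic polynomial $x^m-bx^{m-1}+1$ has dominant Pisot root $\eta\in(b-1,b)$ (since $q(b-1)=1-(b-1)^{m-1}<0<q(b)=1$); together with the trivial initial bounds $(b-1)^i\le N_i^{(\sd)}\le b^i$ and Pisot dominance this gives $N_n^{(\sd)}\asymp\eta^n$ uniformly. The measure $\mu(I_\sd):=\lim_{n\to\f}N_n^{(\sd)}/|\Sigma_{k+n}^\om|$ then furnishes the required $\log\eta/\log b$-dimensional Ahlfors regular measure.

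\textbf{Main obstacle.} In (ii), the hardest step is the uniform lower bound $N_n^{(\sd)}\gtrsim\eta^n$ with a constant independent of $\sd$. The trivial lower bound $N_n^{(\sd)}\ge(b-1)^n$ (each extension step forbids at most one letter) is too weak since $\eta>b-1$, and while the upper bound follows directly from the recurrence, the lower bound requires showing that the $\eta$-spectral coefficient of the sequence $(N_n^{(\sd)})_n$ is uniformly bounded away from $0$ on the compact family of admissible initial vectors $(N_0^{(\sd)},\ldots,N_{m-1}^{(\sd)})\in\prod_{i=0}^{m-1}[(b-1)^i,b^i]$. This reduces to verifying that the left $\eta$-eigenvector of the companion matrix of $x^m-bx^{m-1}+1$ pairs strictly positively against every vector in this box, which is handled by explicit computation of the eigenvector combined with a continuity--compactness argument.
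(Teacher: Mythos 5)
Your proposal is correct, but it takes a genuinely different route from the paper. The paper proves Ahlfors regularity of $K^\om$ in two stages: Proposition \ref{prop:Ahlfors-regular} shows that the two-sided bound $|\Sigma_k^\om|\asymp\beta^k$ already implies $\frac{\log\beta}{\log b}$-Ahlfors regularity by a direct Hausdorff-measure covering argument (using Lemma \ref{lem:key-estimate}, which controls every sub-cylinder count $|\Sigma_{k+n}^\om(\sc)|$ in terms of the global ratio $|\Sigma_{k+n}^\om|/|\Sigma_k^\om|$ via a short ``connecting word'' $u_1\ldots u_{m-1}$); Proposition \ref{prop:TD-PO} then supplies the bound $|\Sigma_k^\om|\asymp\lambda^k$ (resp.\ $\eta^k$) from the exact recurrences of Lemma \ref{lem:bound-cardinality} and the matrix estimates of Lemma \ref{lem:bound-A-B}. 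You instead build a Frostman measure $\mu$ on $K^\om$ with $\mu(I_\sd)\asymp r^{-|\sd|}$ and invoke the ball–cylinder comparison, which is a standard alternative way to certify Ahlfors regularity. In case (i) this is particularly clean: the progressively overlapping structure makes the whole system a genuine topological Markov chain over the fixed state space $\{0,1,\ldots,m-1\}$ of match-suffix lengths, so the stationary Markov measure determined by the Perron eigenvector of the (primitive) transfer matrix does the job immediately, bypassing both Lemma \ref{lem:key-estimate} and Proposition \ref{prop:Ahlfors-regular}. In case (ii) the structure is not level-homogeneous, so you rely on the same exact recurrence $N_n^{(\sd)}=bN_{n-1}^{(\sd)}-N_{n-m}^{(\sd)}$ as the paper's Lemma \ref{lem:bound-cardinality} (your combinatorial verification of the subtraction term agrees with the paper's), and the crucial point is precisely the one you single out: the $\eta$-spectral coefficient of the initial data must be bounded away from zero uniformly over admissible $\sd$. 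This is exactly what the paper's Lemma \ref{lem:bound-A-B} does, though by a different device — the paper proves monotonicity of $\|\mathbf{v}B^n\|$ on the set $V$ and uses it to exclude $c_0(\mathbf{v})=0$, then applies compactness, whereas you pair the initial vector with the explicitly positive left $\eta$-eigenvector $w_j=\eta^{m-j}w_m$. Both work because the equation $g(x)=x^m-bx^{m-1}+1$ has simple roots, so left and right Perron eigenvectors are not orthogonal; your route is arguably more direct. One small caution: the admissible initial data really live in the tighter set where each ratio $N_{i+1}^{(\sd)}/N_i^{(\sd)}\in[b-1,b]$ (Lemma \ref{lem:b-1<b}), not just the box $\prod_i[(b-1)^i,b^i]$, but since every vector in the larger box is strictly positive and $w>0$, the pairing is still uniformly positive, so your argument goes through.
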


\begin{remark}\label{rem:eventually-PO}
  If $\om=\om^0\om^1\ldots\in (D_b^m)^{\N_0}$ is eventually progressively overlapping, i.e.,
  there exists $k_0 \in \N$ such that $\om$ is progressively overlapping at position-$k$ for all $k \ge k_0$,
then the set $K^\om$ is also $\frac{\log\la}{\log b}$-Ahlfors regular. Similarly, if $\om$ is eventually totally distinct, then the set $K^\om$ is $\frac{\log\eta}{\log b}$-Ahlfors regular.
\end{remark}

Next, we show that the dimensions obtained in Theorem \ref{thm:H=P=B=AOP} (i) and (ii) are the upper and lower bounds for fractal dimensions of $K^\om$, respectively; and the Hausdorff and packing dimensions have the intermediate value property. Furthermore, the strict inequalities \[ \dim_{L} K^{\om}< \dim_{H} K^{\om} <\dim_{P} K^{\om}< \dim_{A} K^{\om}\] hold for infinitely  many sequences $\omega \in (D_b^m)^{\N_0}$.
In order to consider the Hausdorff dimension in the symbolic space $(D_b^m)^{\N_0}$, we equip $(D_b^m)^{\N_0}$ with the following metric
\begin{equation}\label{eq:metric-varrho}
  \varrho (\om,\tau)=b^{-m \cdot \inf\{ k \in \N_0: \omega^k \ne \tau^k \}} \quad\text{for}\quad\om=\om^0\om^1\ldots,~\tau=\tau^0\tau^1\ldots\in(D_b^m)^{\N_0}.
\end{equation}

\begin{theorem}\label{thm:intermediate value}\mbox{}

\begin{enumerate}[{\rm(i)}]
  \item  For any $\om\in(D_b^m)^{\N_0}$ we have
    \[ \frac{\log \eta}{\log b} \le \dim_L K^\om \le \dim_H K^{\om}\le\dim_P K^{\om}\le \dim_A K^{\om} \le \frac{\log \la}{\log b}, \]
    where $\la$ and $\eta$ are defined as in Theorem  \ref{thm:H=P=B=AOP}.
  \item For any $\frac{\log \eta}{\log b} \le \alpha \le \beta \le \frac{\log \la}{\log b}$ we have
    \[ \dim_H\Big\{\om\in (D_b^m)^{\N_0} : \dim_{H} K^{\om}=\alpha,~ \dim_{P} K^{\om}=\beta  \Big\}\ge \frac{1}{m}. \]
  \item We have
    \[ \dim_H\Big\{\om\in (D_b^m)^{\N_0} :\dim_{L} K^{\om}< \dim_{H} K^{\om}<\dim_{P} K^{\om}< \dim_{A} K^{\om} \Big\}\ge \frac{1}{m}.
    \]
\end{enumerate}

\end{theorem}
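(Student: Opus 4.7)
\emph{For part (i).} By Theorem~\ref{general-result}, $\dim_H K^\om$ and $\dim_P K^\om$ are the $\liminf$ and $\limsup$, respectively, of $\frac{1}{n\log b}\log\|A_{\om^0}\cdots A_{\om^{n-1}}\|$. The plan is to prove a uniform matrix-product sandwich
\[ c\,\eta^n\ \le\ \|A_{\om^k}A_{\om^{k+1}}\cdots A_{\om^{k+n-1}}\|\ \le\ C\,\la^n \]
for constants $0<c\le C$ depending only on $b$ and $m$, valid for every $\om\in(D_b^m)^\N$, every $k\in\N_0$, and every $n\in\N$. The upper bound should follow from an inductive combinatorial argument showing that at each step the progressively overlapping pattern of Theorem~\ref{thm:H=P=B=AOP}(i) maximizes the number of admissible extensions, and the lower bound is the dual statement against the totally distinct pattern of Theorem~\ref{thm:H=P=B=AOP}(ii). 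Setting $k=0$ gives the bounds on $\dim_H$ and $\dim_P$. For the Assouad bound, the local covering number $N_{b^{-n}}(B(x,b^{-k})\cap K^\om)$ for any $x\in K^\om$ at scales $b^{-k}>b^{-n}$ is controlled by entries of the sub-product $A_{\om^k}\cdots A_{\om^{n-1}}$, and is therefore at most $C\la^{n-k}$; the dual estimate yields the lower dimension bound.

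\emph{For part (ii).} Given $\log\eta/\log b\le\al\le\beta\le\log\la/\log b$, pick mixing weights $p_\al,p_\beta\in[0,1]$ with
\[ p_\al\log\la+(1-p_\al)\log\eta=\al\log b,\qquad p_\beta\log\la+(1-p_\beta)\log\eta=\beta\log b. \]
I would construct $\Om_{\al,\beta}\subset(D_b^m)^\N$ by concatenating macro-blocks $M_j$ of rapidly increasing lengths $L_j$ (e.g.\ $L_{j+1}/\sum_{i\le j}L_i\to\f$), each split into a progressively overlapping sub-block of length $\lfloor q_jL_j\rfloor$ followed by a totally distinct sub-block of length $L_j-\lfloor q_jL_j\rfloor$, where $q_j$ alternates between $p_\al$ and $p_\beta$. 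By part~(i) together with Theorem~\ref{thm:H=P=B=AOP}, the matrix-product norm within each sub-block grows at exactly rate $\la$ or $\eta$, so the running average at the end of a $p_\al$-macro-block tends to $\al\log b$ and at the end of a $p_\beta$-macro-block tends to $\beta\log b$, giving $\dim_H K^\om=\al$ and $\dim_P K^\om=\beta$ for every $\om\in\Om_{\al,\beta}$. Each overlapping sub-block of length $\ell$ has $b^{\ell+m-1}$ admissible fillings, and each totally distinct sub-block of length $\ell$ has at least $(b^m-m+1)^\ell$ fillings, independently across sub-blocks up to bounded boundary effects. Placing the natural uniform product measure on $\Om_{\al,\beta}$ and invoking the mass distribution principle in the metric $\varrho$ of~\eqref{eq:metric-varrho} gives
\[ \dim_H\Om_{\al,\beta}\ \ge\ \liminf_{N\to\f}\frac{1}{mN\log b}\log\bigl|\{\om^0\cdots\om^{N-1}:\om\in\Om_{\al,\beta}\}\bigr|\ \ge\ \frac{1}{m}, \]
the last inequality already attained by the overlapping sub-blocks alone, which supply roughly $\ell\log b$ of entropy per $\ell$ successive $\om$-coordinates against the ambient $m\ell\log b$ of the $\varrho$-metric.

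\emph{For part (iii) and the main obstacle.} When $\log\eta/\log b<\al<\beta<\log\la/\log b$, I would refine the construction in part~(ii) by splicing into each sub-block one short window of the \emph{opposite} type of length $\ell_j'$ with $\ell_j'\to\f$ and $\ell_j'/L_j\to 0$. Because these insertions consume a vanishing fraction of coordinates, the global $\liminf$ and $\limsup$ are unchanged and $\dim_H K^\om=\al$, $\dim_P K^\om=\beta$ persist; however, the overlapping inserts inside totally distinct sub-blocks create sliding windows of unbounded length on which the local matrix product grows like $\la^{\ell_j'}$, forcing $\dim_A K^\om=\log\la/\log b>\beta$ via the local covering interpretation, and the dual inserts force $\dim_L K^\om=\log\eta/\log b<\al$. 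Since only a vanishing fraction of coordinates is consumed, the freedom count of part~(ii) is preserved to leading order and the same mass distribution argument yields $\dim_H\ge 1/m$. The chief obstacle throughout is the uniform-in-$k$ matrix-product inequality of part~(i): pointwise $\liminf/\limsup$ control suffices for $\dim_H$ and $\dim_P$, but $\dim_A$ and $\dim_L$ require the extremality of the overlapping and distinct patterns to hold on every finite window, which demands a structural (not just asymptotic) analysis of the transfer matrices $A_{\om^k}$ acting on $(m-1)$-letter histories.
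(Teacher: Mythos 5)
Your approach is essentially the paper's: a uniform, $k$-independent sandwich $C^{-1}\eta^n\le\|A_{\om^k}\cdots A_{\om^{k+n-1}}\|\le C\la^n$ (the paper proves it in the form $C_2^{-1}\eta^n\le|\Sigma_n^{\tilde\sigma^k\om}|\le C_2\la^n$ via an inductive recursion in Lemma~\ref{lem:bound-cardinality-2} and Lemma~\ref{lem:bound-cardinality}), and the local-covering estimate via $\Sigma_{k+n}^\om(\sc)$ then yields all four dimension bounds. This part is fine.

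\textbf{Part (ii) has a genuine gap.} Your construction alternates macro-block types ($q_j$ alternating between $p_\al$ and $p_\beta$) while demanding $L_{j+1}/\sum_{i\le j}L_i\to\f$. That combination does not produce $\dim_H K^\om=\al$, $\dim_P K^\om=\beta$; it forces the extreme values. Concretely, consider the running average $\tfrac{1}{n}\log\|A_{\om^0}\cdots A_{\om^{n-1}}\|$ evaluated just after the \emph{overlapping} sub-block of macro-block $M_{j+1}$. That sub-block has length $p_\beta L_{j+1}$ (resp.\ $p_\al L_{j+1}$), which by your growth assumption dwarfs the cumulative length $\sum_{i\le j}L_i$ of everything before it. Since the matrix-product norm grows at rate $\la$ throughout an overlapping sub-block, the running average at that moment is forced arbitrarily close to $\log\la$, so $\dim_P K^\om=\log\la/\log b$ regardless of $\beta$; dually $\dim_H K^\om=\log\eta/\log b$. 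Checking the running average only at macro-block \emph{boundaries}, as you do, misses these interior spikes. The paper avoids this by using a \emph{single} repeating overlapping-then-distinct pattern with lengths $p_n,q_n$ tuned so that the cumulative fraction of distinct positions oscillates precisely between $s$ and $t$ (Proposition~\ref{prop:intermediate}, equation~(\ref{eq:limit-s-t})), together with the elementary monotonicity Lemma~\ref{lem:inequality} to bound the running average on every window between boundary times; that is what confines $\liminf$ and $\limsup$ to $\al$ and $\beta$ rather than to the endpoints. A smaller point: the displayed inequality $\dim_H\Om_{\al,\beta}\ge\liminf\frac{1}{mN\log b}\log|\cdots|$ is a statement about lower \emph{box} dimension; to get Hausdorff dimension you need a uniform cylinder-measure estimate (the paper instead builds an explicit Lipschitz embedding of $D_b^\N$ into the parameter set in Lemma~\ref{lem:dimension-L}, which is cleaner).

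\textbf{Part (iii).} Your splicing idea is a legitimate route to pushing $\dim_A$ and $\dim_L$ to the extremes, but it inherits the flaw of part (ii), on which it is built. The paper's route is simpler: the sequences $\{p_n\},\{q_n\}$ constructed in Case~I of part~(ii) already satisfy $p_n\to\f$ and $q_n\to\f$, so Proposition~\ref{prop:dim-L-A} (which uses the pointwise covering estimate from Lemma~\ref{lem:key-estimate} together with Proposition~\ref{prop:TD-PO} on a single long window) gives $\dim_L K^\om=\log\eta/\log b$ and $\dim_A K^\om=\log\la/\log b$ with no extra splicing. Your instinct that Assouad/lower dimensions are governed by arbitrarily long local windows of a single type is the right one; the paper just realizes that the construction from part~(ii) already supplies such windows for free.
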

\begin{remark}\label{rem:multifractal}
      \begin{enumerate}[{\rm(i)}]
      \item Note that $\mathcal{A}=\set{A_{\mathbf d}: \mathbf d\in D_b^m}$ is irreducible in the following sense: there exists $N>0$ such that
    \begin{equation*}
      \sum_{k=1}^{N}\left(\sum_{\sd\in D_b^{m}}A_{\sd}\right)^k>\boldsymbol{0},
    \end{equation*}
    which means each element in the matrix $\sum_{k=1}^{N}\left(\sum_{\sd\in D_b^{m}}A_{\sd}\right)^k$ is positive. So by using Theorem 1.1 of   \cite{Feng-2009} we obtain the following multifractal result of $K^\om$. For any $\alpha\in[\log_b\eta, \log_b\la]$ set
    $
    L_{\alpha}:=\left\{\om\in \left(D_b^m\right)^{\N_0} :\dim_{H} K^{\om}=\dim_{P} K^{\om}=\alpha\right\}.
    $ Then \cite[Theorem 1.1]{Feng-2009} implies that
        \[
        \dim_H L_{\alpha}=\dim_P L_{\alpha}=\frac{1}{m}\inf_{q\in \mathbb{R}}\left\{-\alpha q+P_{\mathcal{A}}(q)\right\},
        \]
        where
       \[
          P_{\mathcal{A}}(q):=\lim_{n\rightarrow \infty}\frac{1}{n} \log_b \left(\sum_{\om^0\om^1\ldots\om^{n-1}\in \left(D_b^m\right)^n} \| A_{\om^0}A_{\om^1}\cdots A_{\om^{n-1}}\|^{q}\right).
          \]
    Adapting  the proof of \cite[Theorem 1.1]{Feng-2009} one can prove that for any $\al, \beta\in[\log_b\eta, \log_b\la]$ with $\al\le\beta$,
      \begin{align*}
        \dim_H \set{\om\in \left(D_b^m\right)^{\N_0} : \dim_{H} K^{\om}=\alpha,~ \dim_{P} K^{\om}=\beta  }&=\min \left\{\dim_HL_{\alpha},\dim_HL_{\beta} \right\},\\
         \dim_P \set{\om\in \left(D_b^m\right)^{\N_0} : \dim_{H} K^{\om}=\alpha,~ \dim_{P} K^{\om}=\beta  }&=\max \left\{\dim_HL_{\alpha}, \dim_HL_{\beta} \right\}.
      \end{align*}
      \item
     In the proof of Theorem \ref{thm:intermediate value} we obtain the following  estimation on the size of $\|A_{\om^0}\cdots A_{\om^{n-1}}\|$. For any $\om\in\left(D_b^m\right)^{\N_0}$ there exists a constant $C\ge 1$ such that for all $n\in \mathbb{N}$,
        \[
        C^{-1}\eta^{n}\le \|A_{\om^0}A_{\om^1}\cdots A_{\om^{n-1}}\|\le  C\la^{n}.
        \]
        \end{enumerate}
      \end{remark}

Finally, we show that if the progressively overlapping pattern and the totally distinct pattern occur periodically in the sequence $\om \in (D_b^m)^{\N_0}$, then the set $K^\om$ is Ahlfors regular. 
More precisely, for $p,q \in \N$  let $L_{p,q}$ be the set of sequences $\om \in (D_b^m)^{\N_0}$  that  is progressively overlapping at position-$k$ for $\ell(p+q)< k \le \ell(p+q)+p$ and is totally distinct at position-$k$ for $\ell(p+q) + p < k \le (\ell+1)(p+q)$, where $\ell \in \N_0$.
In other words, a sequence $\om=\om^0 \om^1 \om^2\ldots \in L_{p,q}$ with each $\om^k = \om^k_1 \om^k_2 \ldots \om^k_m \in D_b^m$ if  
it satisfies
\[
\begin{cases}
  \om^k_1 \ldots \om^k_{m-1} = \om^{k-1}_2 \ldots \om^{k-1}_m, & \mbox{for }\ell(p+q)< k \le \ell(p+q)+p,  \\
  \om^k_1 \ne \om^{k-1}_2,\; \om^k_2 \ldots \om^k_{m-1} = \om^{k-1}_3 \ldots \om^{k-1}_m  & \mbox{for }\ell(p+q) + p < k \le (\ell+1)(p+q) 
\end{cases}
\]
for all $\ell\in\N_0$.  Note that for any $(d_i)\in D_b^\N$ we can construct a sequence $\om=\om^0\om^1\om^2\ldots\in L_{p,q}$ by setting $\om^0_1\ldots \om^0_{m}=d_{1}\ldots d_{m}$, $\om^k_1\ldots \om^k_{m}=d_{k+1}\ldots d_{k+m}$ for $\ell(p+q)<k\le \ell(p+q)+p$, and $\om^k_1\ne d_{k+1}, \om^k_2\ldots \om^k_{m}=d_{k+2}\ldots d_{k+m}$ for $\ell(p+q)+p<k\le (\ell+1)(p+q)$, where $\ell\in\N_0$. This implies that for any  $p,q\in\N$ the set $L_{p,q}$ is uncountable.

\begin{theorem}\label{thm:L-p-q}
  For any $p,q \in \N$, there exists $\lambda_{p,q} >1$ such that the set $K^\om$ is $\frac{\log \lambda_{p,q}}{(p+q) \log b}$-Ahlfors regular for all $\om \in L_{p,q}$. 
  Moreover, the set \[ \bigg\{ \frac{\log \lambda_{p,q}}{(p+q)\log b}: p,q \in \N \bigg\} \] is dense in $[\frac{\log\eta}{\log b}, \frac{\log\lambda}{\log b}]$, where $\lambda$ and $\eta$ are defined as in Theorem \ref{thm:H=P=B=AOP}.
\end{theorem}

\subsection{Badly approximable numbers and non-recurrence points}
Based on the study of open dynamical systems with a moving hole, we consider the set of badly approximable numbers.

Given an integer $b\ge 3$, let $T_b: [0,1)\to[0,1);~ x\mapsto b x\pmod 1$ be the expanding map on the circle. For
 a sequence  of open balls $\{B_n\}_{n=0}^\f$ in $[0,1)$, the following set
\[
W(\set{B_n}):=\set{x\in[0,1): T_b^n (x)\in B_n\textrm{ for infinitely many }n}
\]
is well studied in Diophantine approximation,  which is   called the \emph{shrinking target problem} by Hill and Velani \cite{Hill-Velani-95}. Since $W(\set{B_n})=\limsup_{n\to\f}T_b^{-n}(B_n)$, the set $W(\set{B_n})$ is also called a \emph{limsup set}. It is well known that whether the set $W(\set{B_n})$ having zero or full Lebesgue measure depends only on the convergence or divergence of the series $\sum_{n=0}^{\f}r_n$ (cf.~\cite{Li-Liao-Velani-Zorin-23}), where $r_n$ is the radius of $B_n$ for all $n\in \mathbb{N}_0$. When $W(\set{B_n})$ has zero Lebesgue measure, or equivalently, when $\sum_{n=0}^\f r_n<+\f$,
Bugeaud and Wang \cite[Theorem 1.6]{Bugeaud-Wang-2014}  showed that
$ \dim_H W(\set{B_n})=\limsup_{n\to\f}\frac{n}{n-\log_b r_n}$. For more literature on the well approximable numbers we refer to \cite{Beresnevich-Ramirez-Velani-16, Li-Liao-Velani-Zorin-23, Wang-Wu-21} and the references therein.

When the series $\sum_{n=0}^{\f}r_n=\f$, the set $W(\set{B_n})$ has full Lebesgue measure, and then its complement
\begin{equation*}\label{def:Gamma}
\begin{split}
K\left(\set{B_n}\right)& := [0,1) \setminus W(\set{B_n})=  \set{x\in [0,1) : T_{b}^n (x)\notin B_n\textrm{ for all but finitely many }  n }
\end{split}
\end{equation*}
is a null set.  It is natural to consider the Hausdorff dimension of $K\left(\set{B_n}\right)$. The set $K(\set{B_n})$ is closely related to the set of badly approximable numbers (see \cite{Nielsen_1999}). Since $K(\set{B_n})=\liminf_{n\to\f} T_b^{-n}([0,1)\setminus B_n)$, we also call $K(\set{B_n})$ a \emph{liminf set} in Diophantine approximation.
Our next result characterizes when $K\left(\set{B_n}\right)$ has full Hausdorff dimension.

\begin{theorem}\label{thm:otwillappro}
\begin{enumerate}[{\rm(i)}]
  \item If $\lim_{n\to\f}\diam(B_n)=0$, then $\dim_H K(\set{B_n})=1$.
  \item If $\dim_H K(\set{B_n})=1$, then $\liminf_{n\to\f}\diam(B_n)=0$.
\end{enumerate}
\end{theorem}
\begin{remark}\label{rem:K(Bn)}
\begin{enumerate}[{\rm(i)}]
\item
If the limit $\lim_{n\to\f}\diam(B_n)$ exists, then by Theorem \ref{thm:otwillappro} it follows that
$
\dim_H K\left(\set{B_n}\right)=1$ if and only if $\lim_{n\to\infty}\diam(B_n)=0.
$

\item If the balls $B_n=B(y, \frac{1}{n+1})$ have the same center $y\in[0,1)$, then
\[K\left(\set{B_n}\right)\supseteq \set{x\in[0,1): y\notin\overline{\set{T_b^n(x): n\in\N_0}}}=:K\left(y\right),\]
 which has full Hausdorff dimension. This result was first proved by Urba\'nski \cite{Urbanski-1991}. In fact, Tseng \cite{Tseng-2009} proved a stronger result that the set    $K\left(y\right)$ is  $\alpha$-winning for some $\alpha\in(0,1/2]$ in the sense of Schmidt's game (cf.~\cite{Schmidt-1966}).
For a  general sequence of balls $\set{B_n}$ with $\lim_{n\to\infty}\diam(B_n)=0$, it might be interesting to ask whether  $K\left(\set{B_n}\right)$ is a winning set?
\end{enumerate}
 \end{remark}

The classical Poincar\'e's recurrence theorem states  that for Lebesgue almost every $x\in[0,1)$ its orbit $\set{T_b^n (x): n\in \mathbb{N}_0}$ returns to its neighborhood infinitely many times (cf.~\cite{Walters-82}). Boshernitzan \cite{Boshernitzan-93} described  this type of recurrence results   quantitatively.
Given a rate function $\phi: \N\to(0,\f)$,  Tan and Wang showed in \cite{Tan-Wang-2011} that the Hausdorff dimension of the  set
\[
R(\phi):=\set{x\in[0,1): |T_b^n (x)-x|<\phi(n)\textrm{ for infinitely many }n}
\]
is $\limsup_{n\to\f}\frac{n}{n-\log_b\phi(n)}$. In this paper we also consider its complement, that is
\[
E\left(\phi\right):=[0,1)\setminus R(\phi)=\set{x\in [0,1): |T_b^n (x)-x|\ge \phi(n)\textrm{ for all but finitely many }n}.
\]
\begin{theorem}
  \label{thm:notrecurrence}
 \begin{enumerate}[{\rm(i)}]
   \item If $\lim_{n\to\f}\phi(n)=0$, then $\dim_H E(\phi)=1$.
   \item If $\dim_H E(\phi)=1$, then $\liminf_{n\to\f}\phi(n)=0$.
 \end{enumerate}
\end{theorem}
\begin{remark}
  Similar to Remark \ref{rem:K(Bn)} (i),    if the limit $\lim_{n\to\f}\phi(n)$ exists, then   Theorem \ref{thm:notrecurrence} implies that
   $
  \dim_H E\left(\phi\right)=1$ if and only if $\lim_{n\to\infty}\phi(n)=0.
 $
\end{remark}

\subsection{Finiteness property for the joint spectral radius}
 Given a finite set of $d \times d$ real matrices $\mathcal{M} =\left\{M_1, \ldots, M_r\right\}$,    Rota and  Strang \cite{Rota-Strang-1960} introduced  the concept of  the \emph{joint spectral radius}
\begin{equation}\label{eq:joint spectral radius}
\widehat\rho(\mathcal {M}):=\lim_{n\to\infty} \sup_{i_1\ldots i_n\in\set{1,2,\ldots, r}^n}\left\|M_{i_1} \cdots M_{i_n}\right\|^{1 / n},
\end{equation}
which extends the notion of  spectral radius of a single matrix.
The set $\mathcal{M}$ is said to satisfy the \emph{finiteness property} if there exists a finite block $i_1 \ldots i_n\in\set{1,2,\ldots, r}^n$ such that \[\widehat\rho(\mathcal{M})=\rho\left(M_{i_1} \cdots M_{i_n}\right)^{1 / n},\]
where $\rho(M)$ denotes the spectral radius of a matrix $M$.
  Lagarias and   Wang \cite{Lagarias-Wang-1995} conjectured that  every finite set of $d \times d$ real matrices  possesses the finiteness property.
However, this conjecture was proven to be false in a general setting (cf.~\cite{Bochi-Morris-2015, Bousch-Mairesse-2002, Hare-Morris-Sidorov-2011}); but it still
 holds in many situations (cf.~\cite{Ahmadi-Jungers-2016, Cicone-Nicola-Zennaro-2010, Jungers-2009, Kozyakin-2016, Liu-Xiao-2013, Panti-Sclosa-2021}).
 In particular, if $\mathcal{M}$ is a finite set of $d \times d$ nonnegative matrices with $\widehat\rho(\mathcal{M})=\rho(\sum_{M\in \mathcal{M}}M)$, then $\mathcal{M}$ has the finiteness property (cf.~\cite{Michael-James-Philipp-Neil-2023}).

 Jungers and  Blondel \cite[Theorem 4]{Jungers-Blondel-2008} established the following key equivalence: the finiteness property holds for all sets of nonnegative rational square matrices if and only if it holds for all pairs of binary square matrices.
They also proved that the finiteness property holds for all pairs of $2\times 2$ binary matrices.
Note that each adjacency matrix $A_\sd$ of the directed graph $G_\sd$ is a $0-1$ matrix. Our  final result says that the set $\set{A_\sd: \sd\in D_b^m}$ of adjacency matrices satisfies the finiteness property.

\begin{theorem}\label{thm:finiteness property}
 The set $\mathcal{A}=\left\{A_{\sd}: \sd\in D_b^{m}\right\}$ has the finiteness property.
  Furthermore, for any periodic  and progressively overlapping sequence $\om=(\om^0\om^1\ldots\om^{n-1})^\f\in \left(D_b^m\right)^{\N_0}$ we have
    \[
      \widehat\rho(\mathcal A)=\rho(A_{\om^0}A_{\om^1}\cdots A_{\om^{n-1}})^{1/n}=\la,
      \]
    where $\la\in(b-1, b)$ is defined as in Theorem \ref{thm:H=P=B=AOP}.
\end{theorem}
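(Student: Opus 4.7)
The plan is to prove $\hat\rho(\mathcal A)=\la$, from which both the finiteness property and the explicit formula in the theorem follow at once. I will use three imported facts: the dimensional identity from Theorem \ref{general-result}, the Ahlfors regularity (hence exact dimension) from Theorem \ref{thm:H=P=B=AOP}(i), and the universal upper bound $\dim_P K^\om\le\frac{\log\la}{\log b}$ from Theorem \ref{thm:intermediate value}(i). The only external ingredient I need is the Berger--Wang characterization $\hat\rho(\mathcal A)=\limsup_{n}\max_{\mathbf v\in(D_b^m)^n}\rho(A_{\mathbf v})^{1/n}$, where $A_{\mathbf v}:=A_{\om^0}\cdots A_{\om^{n-1}}$ for $\mathbf v=\om^0\ldots\om^{n-1}$.

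For the upper bound $\hat\rho(\mathcal A)\le\la$, I fix an arbitrary word $\mathbf v=\om^0\ldots\om^{n-1}\in(D_b^m)^n$, set $M=A_{\mathbf v}$, and extend $\mathbf v$ periodically to $\tilde\om=(\om^0\ldots\om^{n-1})^\infty\in(D_b^m)^\N$. Restricting the limsup of Theorem \ref{general-result} to the subsequence $k=qn$ and applying Gelfand's spectral radius formula to $M^{q}$ gives
\[
\dim_P K^{\tilde\om}=\limsup_{k\to\infty}\frac{\log\|A_{\tilde\om^0}\cdots A_{\tilde\om^{k-1}}\|}{k\log b}\ge\lim_{q\to\infty}\frac{\log\|M^q\|}{qn\log b}=\frac{\log\rho(M)}{n\log b}.
\]
Since Theorem \ref{thm:intermediate value}(i) forces $\dim_P K^{\tilde\om}\le\frac{\log\la}{\log b}$, this rearranges to $\rho(M)^{1/n}\le\la$. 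Taking the Berger--Wang $\limsup$ then yields $\hat\rho(\mathcal A)\le\la$.

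For the matching lower bound, together with the `furthermore' formula, I fix any periodic progressively overlapping $\om=(\om^0\ldots\om^{n-1})^\infty$ and set $M=A_{\om^0}\cdots A_{\om^{n-1}}$. The same computation now uses Theorem \ref{thm:H=P=B=AOP}(i) in place of Theorem \ref{thm:intermediate value}(i): because $K^\om$ is $\frac{\log\la}{\log b}$-Ahlfors regular, $\dim_P K^\om=\frac{\log\la}{\log b}$ \emph{exactly}, and since the `tails' $A_{\om^0}\cdots A_{\om^{r-1}}$ for $0\le r<n$ have bounded norm, the displayed inequality above becomes an equality $\frac{\log\rho(M)}{n\log b}=\frac{\log\la}{\log b}$, i.e.\ $\rho(M)^{1/n}=\la$. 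The trivial lower bound $\hat\rho(\mathcal A)\ge\rho(M)^{1/n}$ then delivers $\hat\rho(\mathcal A)\ge\la$, and combining the two estimates gives $\hat\rho(\mathcal A)=\la$. The finiteness property is witnessed by this $M$; the simplest concrete witness is $M=A_{d^m}$ for any $d\in D_b$, since the constant word $d^m\in D_b^m$ is progressively overlapping of period one.

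The only place I expect to spend a moment of care is the passage from the full limsup (over all $k$) to the Gelfand limit along $k=qn$ in the periodic case: one must verify that the bounded tail factors $A_{\om^0}\cdots A_{\om^{r-1}}$ do not inflate the limsup. This is a one-line norm estimate after writing $A_{\om^0}\cdots A_{\om^{k-1}}=M^{q}\cdot A_{\om^0}\cdots A_{\om^{r-1}}$ for $k=qn+r$. No substantively new argument is required beyond the imported dimensional results and the classical Berger--Wang theorem.
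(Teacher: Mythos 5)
Your argument is correct, but it takes a noticeably different route from the paper's. The paper's proof of the upper bound $\hat\rho(\mathcal A)\le\la$ is a one-liner: Lemma \ref{lem:counting} identifies $\|A_{\om^0}\cdots A_{\om^{n-1}}\|=|\Sigma_{n+m-1}^\om|$, and Lemma \ref{lem:bound-cardinality-2} gives the uniform estimate $|\Sigma_k^\om|\le C_2\la^k$ for \emph{every} $\om$ and \emph{every} $k$, so the definition \eqref{eq:joint spectral radius} of $\hat\rho$ as a normalized sup of norms settles the bound directly, with no appeal to spectral radii of individual products. You instead only have the limsup-form dimension bound of Theorem \ref{thm:intermediate value}(i), which controls $\limsup_k \tfrac{\log\|A_{\om^0}\cdots A_{\om^{k-1}}\|}{k}$ but not each $\|A_{\mathbf v}\|$ individually; so you are forced to extract $\rho(A_{\mathbf v})^{1/n}\le\la$ for each finite word via periodic extension and Gelfand, and then invoke the Berger--Wang theorem to pass from a bound on the generalized spectral radius back to $\hat\rho$. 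That import is a genuinely heavy external tool which the paper does not need. (Had you granted yourself the uniform two-sided cardinality bound of Lemma \ref{lem:bound-cardinality-2} --- which is the content underlying Theorem \ref{thm:intermediate value}(i) anyway --- Berger--Wang would be unnecessary.)

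For the lower bound and the \textquotedblleft furthermore\textquotedblright\ formula, your argument and the paper's converge: both compute $\rho(M)^{1/n}=\la$ for $M=A_{\om^0}\cdots A_{\om^{n-1}}$ with $\om$ periodic and progressively overlapping, the paper by noting $\|M^k\|=|\Sigma_{nk+m-1}^\om|\asymp\la^{nk}$ from Lemma \ref{lem:counting} and Proposition \ref{prop:TD-PO}, and you by noting the exact identity $\dim_P K^\om=\tfrac{\log\la}{\log b}$ from Ahlfors regularity and then matching it against Gelfand along $k=qn$. Your observation about controlling the bounded tails $A_{\om^0}\cdots A_{\om^{r-1}}$ is indeed the only technical point to check, and it is fine. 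In short: correct proof, more circuitous, and it imports Berger--Wang where the paper's own lemmas already do the work without it.
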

Indeed Theorem \ref{thm:finiteness property} shows that the finiteness property $\widehat\rho(\mathcal A)=\rho(A_{\om^0}A_{\om^1}\ldots A_{\om^{n-1}})^{1/n}$ holds for infinitely many blocks $\om^0\om^1\ldots \om^{n-1}\in\left(D_b^m\right)^*, n\in\N$.

The rest of the paper is organized as follows. In the next section we consider the dimensions of $K^\om$ for a general $\om\in (D_b^m)^{\N_0}$, and prove Theorem \ref{general-result}. When $\om$ is progressively overlapping or totally distinct, we show in Section \ref{sec:dimension-equality-PO-TD} that all of these fractal dimensions of $K^\om$ are equal. In fact, we show in this case that $K^\om$ is Ahlfors regular (see Theorem \ref{thm:H=P=B=AOP}). In Section \ref{sec:intermediate-dimension}, we prove the intermediate value property for the dimensions of $K^\om$, and show that there are infinitely many $\om\in (D_b^m)^{\N_0}$ such that the dimensions of $K^\om$ are not equal (see Theorem \ref{thm:intermediate value}). 
In Section \ref{sec:regularity}, we study some special sequences $\om\in (D_b^m)^{\N_0}$ for which $\dim_H K^\om = \dim_P K^\om$, and prove Theorem \ref{thm:L-p-q}.
As applications, we prove in Section \ref{sec:applications} Theorems \ref{thm:otwillappro} and \ref{thm:notrecurrence} for badly approximable numbers in Diophantine approximation and non-recurrence points, respectively; and prove the finiteness property for the joint spectral radius of matrices (see Theorem \ref{thm:finiteness property}). Finally, in Section \ref{sec:final-remarks} we make some remarks on possible extensions of our work to higher dimensions or to homogeneous self-similar IFSs.

\section{Hausdorff and packing dimensions of $K^\om$}\label{sec:dimension-K}

In this section we will determine the Hausdorff and packing dimensions of $K^\om$ for a general sequence $\om\in(D_b^m)^{\N_0}$, and prove Theorem \ref{general-result}.
Note that $b \in \N_{\ge 3}$ and $m \in \N_{\ge 2}$.
For a sequence $\om=\om^0\om^1\ldots\in(D_b^m)^{\N_0}$, recall the definitions of $\Sigma^\om$ and $\Sigma_k^\om$ from (\ref{eq:survivor-set-symbolic}) and (\ref{eq:Sigma-k-om-def}), and one can easily check that
\begin{equation}\label{eq:Sigma-k-om}
  \Sigma_k^\om =
  \begin{cases}
    D_b^k, & \mbox{if } 1 \le k < m, \\
    \big\{ d_1d_2\ldots d_k\in D_b^k: d_{i+1}\ldots d_{i+m}\ne \om^i~\forall 0 \le i \le k-m \big\}, & \mbox{if } k \ge m.
  \end{cases}
\end{equation}
First we obtain the following lemma on the cardinality of $\Sigma_k^{\om}$. The sharp upper and lower bounds on $|\Sigma_{k}^{\om}|$ will be given in Lemma \ref{lem:bound-cardinality} later.

\begin{lemma}\label{lem:b-1<b}
For any $\om \in (D_b^m)^{\mathbb{N}_0}$ and for any $k\in\N$ we have
\begin{equation*}\label{eq:growth-Sigma-k}
 (b-1)|\Sigma_k^{\om}| \le |\Sigma_{k+1}^{\om}| \le b |\Sigma_k^{\om}|.
\end{equation*}
\end{lemma}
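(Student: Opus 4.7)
The plan is to exploit the one-step recursive description of $\Sigma_{k+1}^\om$ in terms of $\Sigma_k^\om$ given by the characterisation (\ref{eq:Sigma-k-om}). The key observation is that a word $d_1\ldots d_k d_{k+1}$ lies in $\Sigma_{k+1}^\om$ exactly when its prefix $d_1\ldots d_k$ lies in $\Sigma_k^\om$ \emph{and}, provided $k+1\ge m$, the additional forbidden-word condition $d_{k-m+2}\ldots d_{k+1}\ne \om^{k-m+1}$ holds. This last condition involves only the single new letter $d_{k+1}$, so the natural strategy is to count admissible extensions letter by letter.

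For the upper bound, I would consider the projection map $d_1\ldots d_{k+1}\mapsto d_1\ldots d_k$. The observation above shows that this map sends $\Sigma_{k+1}^\om$ into $\Sigma_k^\om$, and each fibre has cardinality at most $|D_b|=b$; this immediately yields $|\Sigma_{k+1}^\om|\le b|\Sigma_k^\om|$. The boundary case $k<m-1$, where no forbidden-word condition is in force, is handled trivially since then $\Sigma_{k+1}^\om=D_b^{k+1}$ and $\Sigma_k^\om=D_b^k$.

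For the lower bound I would fix $d_1\ldots d_k\in\Sigma_k^\om$ and count how many choices of $d_{k+1}\in D_b$ keep the extended word in $\Sigma_{k+1}^\om$. When $k+1<m$ there is no new constraint and every choice works; when $k+1\ge m$, the only new constraint $d_{k-m+2}\ldots d_{k+1}\ne \om^{k-m+1}$ forbids at most one value of $d_{k+1}$, namely $\om^{k-m+1}_m$, and it is actually forbidden only if the already-fixed suffix $d_{k-m+2}\ldots d_k$ happens to coincide with $\om^{k-m+1}_1\ldots\om^{k-m+1}_{m-1}$. In any case at least $b-1$ letters remain admissible, so summing over prefixes gives $|\Sigma_{k+1}^\om|\ge (b-1)|\Sigma_k^\om|$. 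The small-$k$ boundary cases $k\le m-1$ should be checked separately but are entirely straightforward from $|\Sigma_k^\om|=b^k$ or $|\Sigma_m^\om|=b^m-1$.

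I do not anticipate a genuine obstacle here: the statement is a purely combinatorial, uniform-per-letter extension argument, and the only mild subtlety is keeping track of which regime ($k<m-1$, $k=m-1$, or $k\ge m$) is active when applying (\ref{eq:Sigma-k-om}).
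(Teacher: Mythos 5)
Your proof is correct and follows essentially the same approach as the paper: reduce to the small cases $k<m$ via the explicit formulas $|\Sigma_k^\om|=b^k$ ($k<m$), $|\Sigma_m^\om|=b^m-1$, and for $k\ge m$ observe that the only new constraint on $d_{k+1}$ is $d_{k-m+2}\ldots d_{k+1}\ne\om^{k-m+1}$, which rules out at most the single value $\om^{k-m+1}_m$, giving at least $b-1$ admissible extensions per prefix and at most $b$ extensions trivially.
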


\begin{proof}
  Note by (\ref{eq:Sigma-k-om}) that $|\Sigma_k^{\om}|=b^k$ for all $1\le k<m$ and $|\Sigma_m^{\om}| = b^m -1$.
  Thus the inequalities hold for all $1\le k < m$.
  Now we assume $k\ge m$, and write $\om=\om^0\om^1\ldots\in (D_b^m)^{\N_0}$ with each $\om^i=\om^i_1\om^i_2\ldots \om^i_m \in D_b^m$.
  If $d_1d_2\ldots d_k\in\Sigma_k^{\om}$ and $d_{k+1}\in D_b\setminus\set{\om_m^{k-m+1}}$, then by (\ref{eq:Sigma-k-om}) we must have $d_1d_2\ldots d_k d_{k+1}\in\Sigma_{k+1}^{\om}$. So, we obtain $|\Sigma_{k+1}^{\om}|\ge (b-1)|\Sigma_k^{\om}|$. On the other hand, if $d_1 d_2 \ldots d_{k+1} \in \Sigma_{k+1}^{\om}$, then by (\ref{eq:Sigma-k-om}) its prefix $d_1 d_2 \ldots d_k \in \Sigma_k^{\om}$. Clearly, we have $d_{k+1} \in D_b$.
  It follows that $|\Sigma_{k+1}^{\om}|\le b |\Sigma_k^{\om}|$.
\end{proof}

For $\sc=c_1 c_2 \ldots c_k\in\Sigma_k^{\om}$ and $n\in\N$ let
\begin{equation}\label{eq:Gyn}
  \Sigma_{k+n}^\om(\sc) := \set{d_{1}d_{2}\ldots d_{k+n}\in \Sigma_{k+n}^{\om}:   d_1 d_2 \ldots d_k=\sc}.
\end{equation}
The average cardinality of $\Sigma_{k+n}^\om(\sc)$ for $\sc\in\Sigma_k^\om$ is $|\Sigma_{k+n}^\om| / |\Sigma_k^\om|$.
Next, we show that each set $\Sigma_{k+n}^\om(\sc)$ has the cardinality comparable with the average value.

\begin{lemma}\label{lem:key-estimate}
  Let $\om \in (D_b^m)^{\mathbb{N}_0}$ and $k \in \N_{\geq m}$. Then for $\sc\in\Sigma_k^{\om}$ and $n \in \N$ we have
  \begin{equation}\label{eq:estimate}
    b^{-(m-1)} \frac{|\Sigma_{k+n}^\om|}{|\Sigma_k^\om|} \le |\Sigma_{k+n}^\om(\sc)| \le b^{m-1} \frac{|\Sigma_{k+n}^{\om}|}{|\Sigma_k^{\om}|}.
  \end{equation}
\end{lemma}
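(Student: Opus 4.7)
Plan:

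The key observation is that $|\Sigma_{k+n}^\om(\sc)|$ depends on $\sc$ only through its length-$(m-1)$ suffix $\u := c_{k-m+2}\ldots c_k$. Indeed, given $\sc \in \Sigma_k^\om$, the extra validity constraints on an extension $\sc\, d_{k+1}\ldots d_{k+n} \in \Sigma_{k+n}^\om$ are exactly the forbidden-word conditions at positions $k-m+1,\ldots,k+n-m$, and each references only $\u$ together with the new symbols. Writing $g(\u) := |\Sigma_{k+n}^\om(\sc)|$ and letting $N_k(\u)$ count the $\sc \in \Sigma_k^\om$ with this suffix, we obtain
\[
|\Sigma_k^\om| = \sum_{\u} N_k(\u), \qquad |\Sigma_{k+n}^\om| = \sum_{\u} N_k(\u)\, g(\u),
\]
so the ratio $|\Sigma_{k+n}^\om|/|\Sigma_k^\om|$ is a weighted average of the $g(\u)$. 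Both inequalities in \eqref{eq:estimate} therefore reduce to the uniform ratio estimate
\[
g(\u) \le b^{m-1}\, g(\u') \quad \text{for all } \u,\u' \in D_b^{m-1} \text{ with } N_k(\u), N_k(\u') > 0.
\]
For $n \le m-1$ this is immediate from the trivial bounds $g(\u) \le b^n \le b^{m-1}$ and $g(\u') \ge (b-1)^n \ge 1$, the latter by iterating the lower-bound construction in the proof of Lemma \ref{lem:b-1<b}.

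For $n \ge m$, I plan to prove the ratio estimate via a combinatorial injection
\[
\varphi \colon \Sigma_{k+n}^\om(\sc) \longrightarrow \Sigma_{k+n}^\om(\sc') \times D_b^{m-1},
\]
where $\sc'$ is any fixed prefix in $\Sigma_k^\om$ with suffix $\u'$, that rewires only the first $m-1$ new symbols. Given $\sc\, d_{k+1}\ldots d_{k+n} \in \Sigma_{k+n}^\om(\sc)$, record the block $\mathbf{e} := d_{k+1}\ldots d_{k+m-1}$ and set
\[
\varphi(\sc\, d_{k+1}\ldots d_{k+n}) := (\sc'\, e_1'\ldots e_{m-1}'\, d_{k+m}\ldots d_{k+n},\; \mathbf{e}),
\]
with $e_1'\ldots e_{m-1}'$ chosen canonically (say lexicographically smallest) among valid rewirings making the first coordinate lie in $\Sigma_{k+n}^\om(\sc')$. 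The retained block $\mathbf{e}$ ensures $\varphi$ is injective, and swapping the roles of $\sc$ and $\sc'$ yields the reverse direction.

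The main obstacle will be to verify that a valid rewiring $e_1'\ldots e_{m-1}'$ always exists. The block must simultaneously satisfy the $m-1$ boundary constraints at positions $k-m+1,\ldots,k-1$ (involving $\sc'$'s suffix and the $e_i'$'s) and the $m-1$ bridge constraints at positions $k,\ldots,k+m-2$ (involving the $e_i'$'s and $d_{k+m}, d_{k+m+1},\ldots$). Each such constraint is either vacuous or, respectively, fixes a prefix $e_1'\ldots e_j'$ or a suffix $e_{j+1}'\ldots e_{m-1}'$ to specific values, thereby ruling out $b^{m-1-j}$ or $b^j$ configurations out of $b^{m-1}$ total. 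A union bound gives at most $2(b^{m-1}-1)/(b-1)$ forbidden configurations, which is strictly less than $b^{m-1}$ precisely because $b \ge 3$. Hence a valid rewiring always exists, the injection $\varphi$ is well-defined, and the uniform ratio estimate, together with its symmetric counterpart, completes the proof.
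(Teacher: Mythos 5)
Your proposal is correct, and it takes a genuinely different route from the paper. The paper's proof introduces the auxiliary survivor set $\Sigma_{n-m+1}^\tau$ with $\tau = \om^{k+m-1}\om^{k+m}\ldots$ and chooses a single explicit connecting word $u_1\ldots u_{m-1}$ position-by-position, requiring only $u_i \notin \{\om^{k-m+i}_m, \om^{k-1+i}_1\}$ (two forbidden values, hence $b \ge 3$); this gives $|\Sigma_{n-m+1}^\tau| \le |\Sigma_{k+n}^\om(\sc)| \le b^{m-1}|\Sigma_{n-m+1}^\tau|$ for \emph{every} $\sc$ simultaneously, after which summing over $\sc$ closes the argument. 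You instead note that $|\Sigma_{k+n}^\om(\sc)|$ is a function $g(\u)$ of the length-$(m-1)$ suffix $\u$ of $\sc$ alone, recast $|\Sigma_{k+n}^\om|/|\Sigma_k^\om|$ as a weighted average of the $g(\u)$, and prove the uniform two-sided bound $b^{-(m-1)} g(\u') \le g(\u) \le b^{m-1} g(\u')$ directly via an injection $\Sigma_{k+n}^\om(\sc) \hookrightarrow \Sigma_{k+n}^\om(\sc') \times D_b^{m-1}$. The existence of a valid rewiring $e_1'\ldots e_{m-1}'$ in your injection is established by a union bound counting at most $2(b^{m-1}-1)/(b-1) < b^{m-1}$ forbidden configurations (boundary constraints at positions $k-m+1,\ldots,k-1$ contribute at most $\sum_{j=1}^{m-1} b^{j-1}$, bridge constraints at positions $k,\ldots,k+m-2$ the same), which again uses $b \ge 3$ in exactly the spot the paper does. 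Both arguments are correct and of comparable complexity; the paper's is slightly more constructive (one connecting word is chosen once and works for all tails), while yours is perhaps more conceptual in isolating the weighted-average structure and applying a generic injection/counting template that could transfer to other forbidden-block settings. One small exposition point: your phrasing "fixes a prefix $\ldots$ to specific values" is a touch ambiguous — each non-vacuous constraint \emph{excludes} one prefix (or suffix) value, and the count $b^{m-1-j}$ (resp.\ $b^j$) is the number of full configurations sharing that excluded value — but your tally is right.
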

\begin{proof}
By Lemma \ref{lem:b-1<b} we have \[ (b-1)^n \le \frac{ |\Sigma_{k+n}^\om|}{|\Sigma_k^\om|} \le b^n\quad \forall n\in\N. \]
Thus, for $1\le n < m$ we have \[ b^{-(m-1)} \frac{|\Sigma_{k+n}^\om|}{|\Sigma_k^\om|} \le 1 \le |\Sigma_{k+n}^\om(\sc)| \le b^{m-1} \le b^{m-1} \frac{|\Sigma_{k+n}^{\om}|}{|\Sigma_k^{\om}|}. \]
In the following we assume that $n \ge m$.

Write $\om=\om^0\om^1\ldots\in (D_b^m)^{\N_0}$ with each $\om^i=\om^i_1\om^i_2\ldots \om^i_m \in D_b^m$.
Since $b\ge 3$, we can choose a word $u_1\ldots u_{m-1}\in D_b^{m-1}$ such that
\begin{equation}\label{eq:ui}
u_i\notin\set{\om^{k-m+i}_m, \om^{k-1+i}_1}\quad\forall 1\le i\le m-1.
\end{equation}
Let $\tau:=\om^{k+m-1}\om^{k+m}\ldots\in (D_b^m)^{\N_0}$.
We claim that
\[
\mathbf{c} u_1\ldots u_{m-1} \mathbf{d}\in \Sigma_{k+n}^{\om} \quad\text{for any}~ \sd \in \Sigma_{n-m+1}^\tau.
\]

Write $v_1 v_2 \ldots v_{k+n}=\mathbf{c} u_1\ldots u_{m-1} \mathbf{d}$ for some $\sd \in \Sigma_{n-m+1}^\tau$. By (\ref{eq:Sigma-k-om}) it suffices to show that
\begin{equation}\label{eq:condition-1}
  v_{i+1} \ldots v_{i+m} \ne \om^i \quad \forall 0 \le i \le k+n-m.
\end{equation}
For $0 \le i \le k-m$, (\ref{eq:condition-1}) follows from $\sc\in\Sigma_k^{\om}$;
for $k-m +1 \le i \le k-1$, (\ref{eq:condition-1}) follows from (\ref{eq:ui}) that $v_{i+m}=u_{i+m-k}\ne \om_m^i$; for $k\le i\le \min\{k+n-m,k+m -2\}$, (\ref{eq:condition-1}) follows from (\ref{eq:ui}) that $v_{i+1}=u_{i+1-k}\ne \om_1^i$; and for $k+m-1 \le i \le k+n -m$ if possible, the fact that $\sd \in \Sigma_{n-m+1}^\tau$ implies (\ref{eq:condition-1}).
The claim has been established.
Thus, we obtain that
\begin{equation}\label{eq:nov28-1}
  |\Sigma_{k+n}^\om(\sc)|\ge |\Sigma_{n-m+1}^\tau|.
\end{equation}
On the other hand, for any word $d_1d_2\ldots d_{k+n}\in\Sigma_{k+n}^\om(\sc)$ we have
\[
d_1\ldots d_k=\sc,\quad d_{k+1}\ldots d_{k+m-1}\in D_b^{m-1},\quad\textrm{and}\quad d_{k+m}\ldots d_{k+n}\in\Sigma_{n-m+1}^\tau,
\]
which implies that
\begin{equation}\label{eq:nov28-2}
|\Sigma_{k+n}^\om( \sc)|\le b^{m-1}|\Sigma_{n-m+1}^\tau|.
\end{equation}

Observe that
\[
\Sigma_{k+n}^{\om} =\bigcup_{\widetilde{\sc}\in\Sigma_k^{\om}}\Sigma_{k+n}^\om(\widetilde{\sc}),
\]
where the unions are pairwise disjoint.
Then by (\ref{eq:nov28-1}) and (\ref{eq:nov28-2}) we obtain that
\[
|\Sigma_k^{\om}|\cdot |\Sigma_{n-m+1}^\tau|\le |\Sigma_{k+n}^{\om}|\le b^{m-1}|\Sigma_k^{\om}|\cdot |\Sigma_{n-m+1}^\tau|,
\]
which implies that
\[
b^{-(m-1)}\frac{|\Sigma_{k+n}^{\om}|}{|\Sigma_k^{\om}|}\le |\Sigma_{n-m+1}^\tau|\le \frac{|\Sigma_{k+n}^{\om}|}{|\Sigma_k^{\om}|}.
\]
This together with (\ref{eq:nov28-1}) and (\ref{eq:nov28-2}) implies (\ref{eq:estimate}).
\end{proof}

For $\sd \in D_b^m$, recall that $A_{\sd}$ is the adjacency matrix of the directed graph $G_{\sd}$, which describes the shift of finite type $X_\sd$. Recall that for a matrix $A=(a_{i,j})$ we use the norm $\| A \|=\sum_{i,j}|a_{i,j}|$.

\begin{lemma}\label{lem:counting}
  For any $\om=\om^0\om^1\ldots\in (D_b^m)^{\N_0}$ and $k \in \N_{\ge m}$ we have
  \[ |\Sigma_k^\om|=\|A_{\om^0}A_{\om^1}\cdots A_{\om^{k-m}}\|. \]
\end{lemma}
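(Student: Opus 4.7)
The plan is to set up an explicit bijection between words in $\Sigma_k^\om$ and walks of length $k-m+1$ in a sequence of labelled directed graphs, and then to identify the total number of such walks with the entrywise $1$-norm of the corresponding matrix product.

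First, I would revisit the combinatorial content of the adjacency matrix $A_\sd$. Its rows and columns are indexed by the $b^{m-1}$ words in $V=D_b^{m-1}$, and $(A_\sd)_{\mathbf u,\mathbf v}=1$ precisely when $\mathbf u=u_1\ldots u_{m-1}$ and $\mathbf v=v_1\ldots v_{m-1}$ satisfy the overlap condition $u_2\ldots u_{m-1}=v_1\ldots v_{m-2}$ together with the forbidden-word condition $u_1u_2\ldots u_{m-1}v_{m-1}\ne\sd$. In particular, an entry equal to $1$ encodes a one-letter extension of an $(m-1)$-window that does not create the pattern $\sd$ at the right end.

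Next I would associate to each word $d_1d_2\ldots d_k\in D_b^k$, for $k\ge m$, the sequence of sliding $(m-1)$-windows
\[ \mathbf u_i:=d_{i+1}d_{i+2}\ldots d_{i+m-1}\quad(0\le i\le k-m+1), \]
so that consecutive windows automatically satisfy the overlap condition. The key observation is that the constraint defining $\Sigma_k^\om$, namely $d_{i+1}\ldots d_{i+m}\ne\om^i$ for all $0\le i\le k-m$, translates step-by-step into the condition $(A_{\om^i})_{\mathbf u_i,\mathbf u_{i+1}}=1$. Thus the map
\[ d_1d_2\ldots d_k\longmapsto(\mathbf u_0,\mathbf u_1,\ldots,\mathbf u_{k-m+1}) \]
is a bijection from $\Sigma_k^\om$ onto the set of walks $\mathbf u_0\to\mathbf u_1\to\cdots\to\mathbf u_{k-m+1}$ in which the $i$-th edge is an edge of $G_{\om^i}$; the inverse simply reads off the last letter of each successive window.

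It then remains to recognize that the number of such walks starting at a prescribed $\mathbf u_0$ and ending at a prescribed $\mathbf u_{k-m+1}$ is exactly the $(\mathbf u_0,\mathbf u_{k-m+1})$-entry of the matrix product $A_{\om^0}A_{\om^1}\cdots A_{\om^{k-m}}$; this is a straightforward induction on $k$ using the definition of matrix multiplication and the fact that all entries of the $A_\sd$ lie in $\{0,1\}$. Summing over all pairs $(\mathbf u_0,\mathbf u_{k-m+1})\in V\times V$ gives
\[ |\Sigma_k^\om|=\sum_{\mathbf u_0,\mathbf u_{k-m+1}\in V}(A_{\om^0}A_{\om^1}\cdots A_{\om^{k-m}})_{\mathbf u_0,\mathbf u_{k-m+1}}=\|A_{\om^0}A_{\om^1}\cdots A_{\om^{k-m}}\|, \]
as required. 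The only nontrivial step is the equivalence between the forbidden-window constraints defining $\Sigma_k^\om$ and the edge conditions of the $G_{\om^i}$; this is where one must carefully check that the overlap condition on $(\mathbf u_i,\mathbf u_{i+1})$ forces consistency with the letter $d_{i+m}$, so that the two descriptions of a length-$k$ word agree.
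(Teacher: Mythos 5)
Your argument is correct and is essentially the same as the paper's: the paper expands $\|A_{\om^0}\cdots A_{\om^{k-m}}\|$ as a sum of products of $0$--$1$ entries indexed by sequences $(\mathbf I_0,\ldots,\mathbf I_{k-m+1})\in (D_b^{m-1})^{k-m+2}$ and matches each nonzero term to a word in $\Sigma_k^\om$ via the sliding $(m-1)$-windows, which is precisely your walk-counting bijection phrased algebraically. The only cosmetic difference is that you package the identity ``number of walks $=$ entry of the matrix product'' as a separate induction, whereas the paper reads it off directly from the definition of matrix multiplication.
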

\begin{proof}
  For $i \in \N_0$, write \[ A_{\om^i} = \big( a_i(\mathbf{I},\mathbf{J}) \big)_{\mathbf{I},\mathbf{J}\in D_b^{m-1}}. \]
  Then we have \[ \|A_{\om^0}A_{\om^1}\cdots A_{\om^{k-m}}\| = \sum_{\mathbf{I}_0,\mathbf{I}_1,\ldots,\mathbf{I}_{k-m+1} \in D_b^{m-1}} \prod_{i=0}^{k-m} a_i(\mathbf{I}_i, \mathbf{I}_{i+1}). \]
  Every block $d_1 d_2 \ldots d_k \in D_b^k$ corresponds to a unique term in the above sum
  \[ \prod_{i=0}^{k-m} a_i\big( d_{i+1} \ldots d_{i+m-1},~ d_{i+2} \ldots d_{i+m} \big). \]
  By (\ref{eq:Sigma-k-om}), we have $d_1 d_2 \ldots d_k \in \Sigma_k^\om$ if and only if $d_{i+1} \ldots d_{i+m-1} d_{i+m} \ne \om^{i}$ for all $0\le i \le k-m$.
  By the definition of $A_{\om^i}$, it is also equivalent to \[ \prod_{i=0}^{k-m} a_i\big( d_{i+1} \ldots d_{i+m-1}, d_{i+2} \ldots d_{i+m} \big)=1. \]
  The desired equality follows directly.
\end{proof}

\begin{proof}[Proof of Theorem \ref{general-result}]
Note by (\ref{eq:survivor-set-K}) that  for any $k\in \mathbb{N}$, $K^\om\subset\bigcup_{\sd\in \Sigma_k^\om}I_\sd$ and $K^\om \cap I_\sd \ne \emptyset$ for all $\sd \in \Sigma_k^\om$.
By the definition of box dimension (cf.~\cite{Falconer-14}) and Lemma \ref{lem:counting}, we obtain
$$\underline{\dim}_B K^{\om} = \liminf_{n \to \f} \frac{\log |\Sigma_n^{\om}|}{n \log b }=\liminf_{n\to\infty}\frac{1}{n \log b}\log \|A_{\om^0}A_{\om^1}\cdots A_{\om^{n-1}}\|,$$
and
$$\overline{\dim}_B K^{\om} = \limsup_{n \to \f} \frac{\log |\Sigma_n^{\om}|}{n \log b}=\limsup_{n\to\infty}\frac{1}{n \log b}\log\|A_{\om^0}A_{\om^1}\cdots A_{\om^{n-1}}\|.$$

Note that $\dim_H K^\om \le \underline{\dim}_B K^{\om} =: s$. We only need to show that $\dim_{H} K^\om \ge s$.
By Lemma \ref{lem:b-1<b} and using $b\in\N_{\ge 3}$, we have $s\ge \log(b-1)/\log b >0$.
Take $0 < t < s$. Then we can find $k_0 \in \N_{\geq m}$ such that
\begin{equation}\label{eq:t-n}
  |\Sigma_k^{\om}| \ge b^{k t}\quad \forall k\ge k_0.
\end{equation}
We will prove that $\mathcal{H}^t\big( K^\om \big) >0$.
Note that $K^\om$ is a compact set. By the definition of Hausdorff measure (cf.~\cite{Falconer-14}) it suffices to show that
\begin{equation}\label{eq:H-t}
\sum_{i=1}^q\big(\diam(U_i)\big)^{t} \geq b^{-(m+s)}
\end{equation}
for any finite open $b^{-k_0}$-covering $\left\{U_i\right\}_{i=1}^q$ of $K^\om$.

Let $\left\{U_i\right\}_{i=1}^q$ be an open $b^{-k_0}$-covering of $K^\om$.
For each $1\le i \le q$, choose $k_i\in \N_{\ge k_0}$ such that
\begin{equation}\label{eq:U-i}
{b}^{-(k_i+1)} \le \diam(U_i)< {b}^{-k_i}.
\end{equation}
Fix an integer $k > \max\{ k_1,k_2,\ldots, k_q \}$.
Since $\left\{U_i\right\}_{i=1}^q$ is a covering of $K^\om$, by (\ref{eq:survivor-set-K}) we have
\begin{equation}\label{eq:covering-K}
  \Sigma_k^\om = \bigcup_{i=1}^q \{ \sd \in \Sigma_k^\om:   I_\sd \cap U_i \ne \emptyset\}.
\end{equation}
For each $1 \le i \le q$, since $\diam(U_i) < b^{-k_i}$, the set $U_i$ intersects at most two intervals in $\{ I_\sd : \sd \in \Sigma_{k_i}^\om \}$, denoted by $I_{\sc_i}, I_{\widetilde{\sc}_i}$ with $\sc_i, \widetilde{\sc}_i \in \Sigma_{k_i}^\om$.
Recall the definitions of $\Sigma_{k}^\om(\sc_i)$ and $ \Sigma_{k}^\om(\widetilde{\sc}_i)$ in (\ref{eq:Gyn}).
Then for each $1\le i\le q$ we have
 \[
  \{ \sd \in \Sigma_k^\om:  I_\sd \cap U_i \ne \emptyset\} \subset \Sigma_{k}^\om(\sc_i) \cup \Sigma_{k}^\om(\widetilde{\sc}_i).
   \]
 By (\ref{eq:covering-K}) and Lemma \ref{lem:key-estimate}  we obtain
\[
|\Sigma_k^\om|\le\sum_{i=1}^{q}|\{ \sd \in \Sigma_k^\om:  I_\sd \cap U_i \ne \emptyset\}| \le \sum_{i=1}^{q}\left(|\Sigma_{k}^\om(\sc_i)| +|\Sigma_{k}^\om(\widetilde{\sc}_i)|\right)\le 2 b^{m-1} |\Sigma_k^\om|\sum_{i=1}^{q}|\Sigma_{k_i}^\om|^{-1}.
 \]
Note that $b\geq 3$. It follows that
 \begin{equation}
   \label{eq:march3-1}
   \sum_{i=1}^{q}|\Sigma_{k_i}^\om|^{-1}\ge b^{-m}.
 \end{equation}
Since $k_i \ge k_0$, by (\ref{eq:t-n}) and (\ref{eq:U-i}) we have
$|\Sigma_{k_i}^\om|^{-1}\le b^{-k_i t}<b^s \big(\diam(U_i)\big)^t$.
Then (\ref{eq:H-t}) follows directly from (\ref{eq:march3-1}).
Thus we obtain $\dim_H K^\om \ge t$.
Since $t\in(0,s)$ was chosen arbitrarily, we conclude that $\dim_{H} K^\om \ge s$.

In order to show $\dim_P K^\om =\overline{\dim}_B K^\om$, by \cite[Corollary 3.10]{Falconer-14} it suffices to show that for any open set $V$ with $V \cap K^\om \ne \emptyset$,
\begin{equation}\label{eq:packing-dimension}
  \overline{\dim}_B \big( V \cap K^\om \big) = \overline{\dim}_B K^\om = \limsup_{k \to \f} \frac{\log |\Sigma_k^\om|}{k \log b}.
\end{equation}
Take an open set $V$ with $V \cap K^\om \ne \emptyset$.
Then we can find $k_0 \in \N_{\geq m}$ and $\sc \in \Sigma_{k_0}^\om$ such that $ I_\sc \cap K^\om \subset V \cap K^\om.$
Recall the definition of $\Sigma_{k}^\om(\sc)$ in (\ref{eq:Gyn}).
Then for any $k >k_0$ we have
 $$\Sigma_k^\om(\sc) \subset \big\{\sd \in D_b^k:  I_\sd \cap (I_\sc \cap K^\om) \ne \emptyset \big\}.$$
Thus, we obtain
$$\overline{\dim}_B \big( V \cap K^\om \big) \ge \overline{\dim}_B \big( I_\sc \cap K^\om \big) \ge \limsup_{k \to \f} \frac{\log | \Sigma_{k}^\om(\sc)|}{k\log b}.$$
It follows from Lemma \ref{lem:key-estimate} that
\[ \overline{\dim}_B \big( V \cap K^\om \big) \ge \limsup_{k \to \f} \frac{\log |\Sigma_k^\om| -\log |\Sigma_{k_0}^\om| - (m-1)\log b}{k \log b} = \limsup_{k \to \f} \frac{\log |\Sigma_k^\om|}{k \log b}. \]
We clearly have \[ \overline{\dim}_B \big( V \cap K^\om \big) \le \overline{\dim}_B K^\om = \limsup_{k \to \f} \frac{\log |\Sigma_k^\om|}{k \log b}.\]
Therefore, we obtain (\ref{eq:packing-dimension}), completing the proof.
\end{proof}
In general, we can not expect $\dim_H K^\om=\dim_P K^\om$. However, if $\om$ is a progressively overlapping sequence or a totally distinct sequence, then we have the equality $\dim_H K^\om=\dim_P K^\om$, which will be discussed  in the next section.

\section{Dimensions of $K^\om$  for progressively overlapping or totally distinct $\om$}\label{sec:dimension-equality-PO-TD}
In this section we will prove the dimension equality of $K^\om$ for progressively overlapping and totally distinct sequences $\om\in(D_b^m)^{\N_0}$, and establish Theorem \ref{thm:H=P=B=AOP}.
Recall the definitions of progressively overlapping and totally distinct sequences in subsection \ref{subsec:open-dynamics}, and keep in mind that $b \in \N_{\ge 3}$ and $m \in \N_{\ge 2}$.

\begin{proposition}\label{prop:Ahlfors-regular}
  For $\omega \in (D_b^m)^{\N_0}$, if there exist $\beta > 1$ and $C>1$ such that
  \[ C^{-1}\beta^k \le |\Sigma_k^\om| \le C \beta^k \quad \forall k \in \N, \]
  then the set $K^\om$ is $\frac{\log \beta}{\log b}$-Ahlfors regular.
\end{proposition}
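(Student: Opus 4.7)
The plan is to build a natural probability measure $\mu$ on $K^\om$ whose $\mu$-mass on every basic cylinder $I_\sc$, $\sc\in\Sigma_k^\om$, is comparable to $\beta^{-k}$, and then to deduce the Ahlfors regularity from the resulting two-sided ball estimates. Set $s := \log\beta/\log b$, so that $\beta = b^s$, and note that $s \in (0,1]$ by Lemma \ref{lem:b-1<b} and the hypothesis.

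\textbf{Step 1 (measure construction).} For each $n \ge m$, let $\mu_n$ be normalized Lebesgue on $\bigcup_{\sd\in\Sigma_n^\om} I_\sd$; equivalently, $\mu_n(I_\sd)=1/|\Sigma_n^\om|$ for $\sd\in\Sigma_n^\om$ with Lebesgue distribution inside each $I_\sd$. By Banach--Alaoglu, pass to a weak-$*$ subsequential limit $\mu$, which is supported on $K^\om$ because the sets $\bigcup_{\sd\in\Sigma_n^\om} I_\sd$ decrease to $K^\om$. For $k\ge m$ and $\sc\in\Sigma_k^\om$, whenever $n\ge k$, the identity $\mu_n(I_\sc)=|\Sigma_n^\om(\sc)|/|\Sigma_n^\om|$ combined with Lemma \ref{lem:key-estimate} and the hypothesis $C^{-1}\beta^k\le |\Sigma_k^\om|\le C\beta^k$ gives
\[
b^{-(m-1)}C^{-1}\beta^{-k} \le \mu_n(I_\sc) \le b^{m-1}C\,\beta^{-k}.
\]
Since $\partial I_\sc$ is a two-point set, a minor boundary adjustment (working with $\overline{I_\sc}$ or splitting off possible atoms, which can occur at only countably many $b$-adic rationals and are negligible for what follows) shows that $\mu$ inherits constants $c_1,c_2>0$ with $c_1\beta^{-k}\le \mu(I_\sc\cap K^\om)\le c_2\beta^{-k}$ for all $k\ge m$ and $\sc\in\Sigma_k^\om$.

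\textbf{Step 2 (ball estimates).} Fix $x\in K^\om$ and $0<r<b^{-m}$; choose $k\ge m$ with $b^{-(k+1)}\le r< b^{-k}$. For the upper bound, $B(x,r)$ meets at most three basic intervals at level $k$ (the one containing $x$ and at most two neighbors), so $\mu(B(x,r))\le 3c_2\beta^{-k}\le 3c_2 b^s\, r^s$. For the lower bound, the unique $I_\sd$ with $\sd\in\Sigma_{k+1}^\om$ containing $x$ has diameter $b^{-(k+1)}\le r$, hence $I_\sd\subset B(x,r)$ and $\mu(B(x,r))\ge c_1\beta^{-(k+1)}\ge c_1 b^{-s}r^s$.

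\textbf{Step 3 (from ball estimates to Ahlfors regularity).} The lower bound on $\mathcal H^s$ is the standard mass distribution principle: for any cover $\{U_i\}$ of $K^\om\cap B(x,r)$ by sets of small diameter, $\mu(U_i)\le 3c_2 b^s\,|U_i|^s$, so $\mathcal H^s(K^\om\cap B(x,r))\gtrsim\mu(K^\om\cap B(x,r))\gtrsim r^s$. For the matching upper bound, I would cover $K^\om\cap B(x,r)$ at each scale $k'\ge k$ by those $I_\sd$, $\sd\in\Sigma_{k'}^\om$, that meet $B(x,r)$; these are descendants of at most three level-$k$ cylinders, so by Lemma \ref{lem:key-estimate} their total count is $\lesssim \beta^{k'-k}$, and the sum $\sum |I_\sd|^s \lesssim \beta^{k'-k}\cdot b^{-k's}=\beta^{-k}\lesssim r^s$, uniformly in $k'$. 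Letting $k'\to\infty$ yields $\mathcal H^s(K^\om\cap B(x,r))\lesssim r^s$.

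The only genuine technicality is the boundary issue for $\mu$ at $b$-adic rationals in Step~1; this is handled routinely since such points form a countable set on which one can shift mass between adjacent half-open cylinders without changing the bounds. Beyond this, the argument is essentially a direct combination of Lemma \ref{lem:key-estimate} with the hypothesized growth of $|\Sigma_k^\om|$.
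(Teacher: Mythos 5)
Your proposal is correct, but it takes a genuinely different route from the paper. You construct an explicit probability measure $\mu$ on $K^\om$ as a weak-$*$ limit of normalized Lebesgue measures on $\bigcup_{\sd\in\Sigma_n^\om}I_\sd$, show that $\mu$ satisfies two-sided bounds $\mu(B(x,r))\asymp r^s$ on balls, and then deduce the lower $\mathcal H^s$-bound from the mass distribution principle and the upper $\mathcal H^s$-bound by a direct covering. The paper instead works entirely with the covering definition of Hausdorff measure: the upper bound is established via the same multi-scale covering by descendants of at most three level-$k_0$ cylinders, but the lower bound is proved directly by taking an arbitrary finite open cover $\{U_i\}$ of $K^\om\cap B(x,r)$, assigning to each $U_i$ a level $k_i$ adapted to its diameter, pushing the covering property down to a single common fine level $k$, and then invoking Lemma~\ref{lem:key-estimate} together with the hypothesis to force $\sum_i \diam(U_i)^s\gtrsim r^s$. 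What your approach buys is a cleaner, more modular argument and the slightly stronger conclusion that $K^\om$ carries a natural Ahlfors-regular measure; what it costs is having to handle the weak-$*$ limit carefully (atoms can concentrate at $b$-adic endpoints, and the Portmanteau inequalities go in the wrong direction for the upper cylinder bound unless one covers by slightly fattened open sets and lets the fattening scale go to zero). You flag this boundary issue and sketch a fix, which is indeed routine; the paper's measure-free argument sidesteps it entirely. Both proofs hinge on the same key input, Lemma~\ref{lem:key-estimate}, and are of comparable length.
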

\begin{proof}
  Write $s= \log \beta/\log b$. It suffices to show that there exists a constant $c\ge 1$ such that for all $0< r< b^{-m}$ and for all $x \in K^\om$,
  \begin{equation}\label{eq:Ahlfors-regular}
    c^{-1} r^s \le \mathcal{H}^s\big( K^\om \cap B(x,r) \big) \le c r^s.
  \end{equation}

  Fix $0< r < b^{-m}$ and $x \in K^\om$. Choose $k_0 \in \N_{\ge m}$ such that
  \begin{equation}\label{eq:r}
    \frac{1}{b^{k_0+1}} \le r < \frac{1}{b^{k_0}}.
  \end{equation}
  We first show the upper bound in (\ref{eq:Ahlfors-regular}).
  Since $r< b^{-k_0}$, the ball $B(x,r)$ intersects at most three intervals in $\{I_\sd : \sd\in \Sigma_{k_0}^\om\}$, denoted by $I_{\sc_1},I_{\sc_2},I_{\sc_3}$ with $\sc_i \in \Sigma_{k_0}^\om$ for $1\le i\le 3$.
  That is, $K^\om \cap B(x,r) \subset I_{\sc_1} \cup I_{\sc_2} \cup I_{\sc_3}$. Recall the definition of $\Sigma_{k_0+n}^\om(\sc_i)$ in (\ref{eq:Gyn}).
  For $n \in \N$, the family of intervals $\mathcal{I}_n:=\{I_\sd : \sd \in \Sigma_{k_0+n}^\om(\sc_1)\cup \Sigma_{k_0+n}^\om(\sc_2)\cup\Sigma_{k_0+n}^\om(\sc_3)\}$ forms a $b^{-(k_0+n)}$-covering of $K^\om \cap B(x,r)$.
  By the assumption and using Lemma \ref{lem:key-estimate} and (\ref{eq:r}), we have
  \begin{align*}
    \sum_{I \in \mathcal{I}_n} \big( \mathrm{diam}(I)\big)^s & \le |\mathcal{I}_n| b^{-s(k_0+n)} \le \big( |\Sigma_{k_0+n}^\om(\sc_1)| +  |\Sigma_{k_0+n}^\om(\sc_2)|+|\Sigma_{k_0+n}^\om(\sc_3)| \big) b^{-s(k_0+n)} \\
    & \le 3b^{m-1} \frac{|\Sigma_{k_0+n}^\om|}{|\Sigma_{k_0}^\om|} b^{-s(k_0+n)} \le 3 b^{m-1}C^2 \cdot \beta^n b^{-s(k_0+n)} \\
    & \le 3 b^{m+s-1} C^2 \cdot r^s.
  \end{align*}
  This implies that \[ \mathcal{H}^s\big( K^\om \cap B(x,r) \big) \le 3 \beta b^{m-1} C^2 \cdot r^s.\]

  In order to show the lower bound in (\ref{eq:Ahlfors-regular}), noting that $K^\om \cap B(x,r)$ is compact, it suffices to show that for any finite open $b^{-k_0}$-covering $\left\{U_i\right\}_{i=1}^q$ of $K^\om \cap B(x,r)$,
  \begin{equation*}\label{eq:H-s}
  \sum_{i=1}^q\big( \diam(U_i) \big)^{s} \geq c^{-1}  r^s.
  \end{equation*}

  Let $\left\{U_i\right\}_{i=1}^q$ be an open $b^{-k_0}$-covering of $K^\om$.
  For each $1\le i \le q$, choose $k_i\in \N_{\ge k_0}$ such that
  \begin{equation}\label{eq:diam-U-i}
  {b}^{-(k_i+1)} \le \diam(U_i)< {b}^{-k_i}.
  \end{equation}
  Then the set $U_i$ intersects at most two intervals in $\{ I_\sd : \sd \in \Sigma_{k_i}^\om \}$, denoted by $I_{\sc_i}, I_{\widetilde{\sc}_i}$ with $\sc_i, \widetilde{\sc}_i \in \Sigma_{k_i}^\om$.
  Since $r\ge b^{-(k_0+1)}$, we can find $\sc \in \Sigma_{k_0+1}^\om$ such that $K^\om \cap I_{\sc} \subset K^\om \cap B(x,r)$.
  Fix an integer $k> \max\{k_0+1,k_1,\ldots,k_q\}$.
  Recall the definitions of $\Sigma_{k}^\om(\sc_i)$, $ \Sigma_{k}^\om(\widetilde{\sc}_i)$, and  $\Sigma_{k}^\om(\sc)$ in (\ref{eq:Gyn}).
  Note that $K^\om \cap I_{\sc} \subset K^\om \cap B(x,r)$ and $\{U_i\}_{i=1}^q$ is a cover of $K^\om $.
  Then we have \[ \Sigma_{k}^\om(\sc) \subset \{\sd\in \Sigma_k^\om : I_\sd \cap K^\om \cap B(x,r)\ne\emptyset \} \subset \bigcup_{i=1}^q \big( \Sigma_{k}^\om(\sc_i) \cup \Sigma_{k}^\om(\widetilde{\sc}_i) \big). \]
  It follows that \[ |\Sigma_{k}^\om(\sc)| \le \sum_{i=1}^q \big( |\Sigma_{k}^\om(\sc_i)| + |\Sigma_{k}^\om(\widetilde{\sc}_i)| \big). \]
  By Lemma \ref{lem:key-estimate}, we obtain \[ b^{-(m-1)} \frac{|\Sigma_k^\om|}{|\Sigma_{k_0+1}^\om|}\le|\Sigma_k^\om(\mathbf c)| \le 2 b^{m-1} \sum_{i=1}^q \frac{|\Sigma_k^\om|}{|\Sigma_{k_i}^\om|}. \]
  By the assumption this implies \[ \frac{\beta^{-(k_0+1)}}{b^{m-1} C } \le 2b^{m-1} C \sum_{i=1}^{q} \beta^{-k_i}.  \]
  This together with (\ref{eq:r}) and (\ref{eq:diam-U-i}) implies that
  \[ \sum_{i=1}^q\big( \diam(U_i) \big)^{s} \ge \sum_{i=1}^q b^{-s(k_i+1)} = \sum_{i=1}^q \beta^{-(k_i+1)}
    \ge \frac{\beta^{-(k_0+2)}}{2b^{2m-2} C^2 } \ge \frac{r^s}{2 \beta^2 b^{2m-2} C^2 }. \]
  Hence, (\ref{eq:Ahlfors-regular}) holds by taking $c=\max\set{3\beta b^{m-1}C^2, 2\beta^2 b^{2m-2} C^2}$, completing the proof.
\end{proof}

\begin{lemma}\label{lem:bound-cardinality}
 For any $\om \in(D_b^m)^{\N_0}$ and any $k\in\N$ we have
 \[ b|\Sigma_{k+m-1}^\om|-|\Sigma_{k}^\om| \le |\Sigma_{k+m}^\om|\le (b-1)\sum_{j=0}^{m-1} |\Sigma_{k+j}^\om|. \]
 Moreover, the lower bound equality holds if $\om$ is totally distinct at position-$k$, and the upper bound equality holds if $\om$ is progressively overlapping at position-$k$.
\end{lemma}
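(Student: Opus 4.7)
My plan is to derive both bounds from a single recursion and then analyze the two equality cases via two explicit injections. First, I would establish
\[
|\Sigma_{k+m}^\om| \;=\; b\,|\Sigma_{k+m-1}^\om| - |F_k|,\qquad F_k := \bigl\{ d \in \Sigma_{k+m-1}^\om : d_{k+1}\ldots d_{k+m-1} = \om^k_1\ldots\om^k_{m-1} \bigr\},
\]
by counting one-digit extensions of words in $\Sigma_{k+m-1}^\om$: only words in $F_k$ are forced to avoid $d_{k+m}=\om^k_m$ (giving $b-1$ choices), while the rest admit all $b$. The truncation $d\mapsto d_1\ldots d_k$ injects $F_k$ into $\Sigma_k^\om$ (the tail being fixed), so $|F_k|\le|\Sigma_k^\om|$ and the lower bound follows. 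If $\om$ is totally distinct at position-$k$, this truncation is also surjective: appending $\om^k_1\ldots\om^k_{m-1}$ to any $d_1\ldots d_k\in\Sigma_k^\om$ can create a forbidden block at index $i=k-j$ only if $\om^k_1\ldots\om^k_{m-j}=\om^{k-j}_{j+1}\ldots\om^{k-j}_m$, which TD forbids; hence $|F_k|=|\Sigma_k^\om|$ and the lower bound is tight.

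For the upper bound, I plan to partition $\Sigma_{k+m}^\om=\bigsqcup_{j=0}^{m-1}\mathcal{T}_j$, where $\mathcal{T}_j$ is the set of $d$ whose longest common suffix between $d_{k+1}\ldots d_{k+m}$ and $\om^k$ has length exactly $j$; equivalently, $d_{k+m-j+1}\ldots d_{k+m}=\om^k_{m-j+1}\ldots\om^k_m$ and $d_{k+m-j}\ne\om^k_{m-j}$. Since the last $j$ digits are forced, the map
\[
\mathcal{T}_j \;\longrightarrow\; \Sigma_{k+m-j-1}^\om \times \bigl( D_b \setminus \{\om^k_{m-j}\}\bigr),\qquad d\mapsto\bigl(d_1\ldots d_{k+m-j-1},\,d_{k+m-j}\bigr),
\]
is well-defined and injective, yielding $|\mathcal{T}_j|\le(b-1)|\Sigma_{k+m-j-1}^\om|$; summing over $j=0,1,\ldots,m-1$ gives the upper bound.

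The main obstacle, and the step I expect to be the most delicate, will be the surjectivity of this latter injection in the progressively overlapping case. Given any $(e,c)\in\Sigma_{k+m-j-1}^\om\times(D_b\setminus\{\om^k_{m-j}\})$, I would set $d:=e\,c\,\om^k_{m-j+1}\ldots\om^k_m$ and verify $d\in\Sigma_{k+m}^\om$. The constraints $d_{i+1}\ldots d_{i+m}\ne\om^i$ for $i\le k-j-1$ are inherited from $e$. For each remaining $i=k-l$ with $0\le l\le \min\{j,k\}$, I would argue by contradiction: the block $d_{i+1}\ldots d_{i+m}$ contains $c=d_{k+m-j}$ at block-offset $m-j+l$, and if this block equalled $\om^{k-l}$ then the progressively-overlapping identity $\om^{k-l}_{l+s}=\om^k_s$ (valid for $1\le s\le m-l$, applied at $s=m-j\le m-l$) would force $c=\om^k_{m-j}$, contradicting $c\ne\om^k_{m-j}$; the case $l=0$ is immediate since the block contains $c\ne\om^k_{m-j}$ at position $m-j$. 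Hence each map is surjective, so the upper bound is tight when $\om$ is progressively overlapping at position-$k$.
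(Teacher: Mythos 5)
Your proposal is correct and follows essentially the same route as the paper's proof: the exact recursion $|\Sigma_{k+m}^\om| = b|\Sigma_{k+m-1}^\om| - |F_k|$ together with the injection (surjection under TD) $F_k \hookrightarrow \Sigma_k^\om$ is a cleaner restatement of the paper's inclusion $\Sigma_{k+m-1}^\om \times D_b \subset H_{k+m}^\om \cup \Sigma_{k+m}^\om$ with $|H_{k+m}^\om| = |\Sigma_k^\om|$, and your partition of $\Sigma_{k+m}^\om$ by longest common suffix length $j$ is the paper's partition $\Sigma_{k+m}^\om(j')$ (by the largest index $j'$ with $d_{k+j'} \ne \om^k_{j'}$) under the reindexing $j' = m - j$. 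The PO-surjectivity verification via the overlapping identity $\om^{k-\ell}_{\ell+s}=\om^k_s$ matches the paper's argument as well.
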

\begin{proof}
  Write $\om = \omega^0 \omega^1 \omega^2 \ldots \in(D_b^m)^{\N_0}$ with each $\om^i=\om_1^i \om_2^i \ldots \om_m^i \in D_b^m$, and fix $k\in\N$.

  (i) We first show the lower bound.
  Let \[
H_{k+m}^\om:=\big\{ d_1d_2\ldots d_{k+m}\in D_b^{k+m}: d_1d_2\ldots d_k\in\Sigma_k^\om, d_{k+1}d_{k+2}\ldots d_{k+m}=\om^k \big\}.
\]
Then it's clear that $H_{k+m}^\om\cap \Sigma_{k+m}^\om=\emptyset$.
For any $d_1\ldots d_{k+m-1}\in \Sigma_{k+m-1}^\om$ and any $d_{k+m}\in D_b$, if $d_{k+1}d_{k+2}\ldots d_{k+m}\neq \om_1^{k}\om_2^{k}\ldots\om_m^k$, then $d_1\ldots d_{k+m}\in\Sigma_{k+m}^\om$; otherwise, we have $d_1\ldots d_{k+m}\in H_{k+m}^\om$ because $d_1\ldots d_{k}\in\Sigma_{k}^\om$.
It follows that
\begin{equation}\label{eq:lower-1}
  \Sigma_{k+m-1}^\om\times D_b \subset H_{k+m}^\om\cup\Sigma_{k+m}^\om,
\end{equation}
which implies that
\begin{equation*}
|\Sigma_{k+m}^\om|\ge|\Sigma_{k+m-1}^\om\times D_b|-|H_{k+m}^\om|= b|\Sigma_{k+m-1}^\om|-|\Sigma_{k}^\om|.
\end{equation*}

Next, we assume that $\om$ is totally distinct at position-$k$. In order to show the equality, by (\ref{eq:lower-1}) it suffices to show
\[ H_{k+m}^\om\cup\Sigma_{k+m}^\om \subset \Sigma_{k+m-1}^\om\times D_b. \]
We clearly have $\Sigma_{k+m}^\om \subset \Sigma_{k+m-1}^\om\times D_b$. It remains to show
\begin{equation}\label{eq:lower-2}
  H_{k+m}^\om\subset \Sigma_{k+m-1}^\om\times D_b.
\end{equation}
Take a word $d_1d_2\ldots d_{k+m}\in H_{k+m}^\om$. Then $d_1\ldots d_k\in\Sigma_k^\om$ and $d_{k+1}d_{k+2}\ldots d_{k+m}=\om_1^{k}\om_2^{k}\ldots\om_m^k$.
Since $\om$ is totally distinct at position-$k$, we have \[ \omega^k_1 \ldots \omega^k_{m-j} \ne \omega^{k-j}_{j+1} \ldots \omega^{k-j}_m\quad \forall 1 \le j \le \min\{ k, m-1\}. \]
It follows that for $\max\{k-m+1,0\} \le i < k$, \[ d_{i+1} \ldots d_{i+m} = d_{i+1} \ldots d_k \omega^k_{1} \ldots \omega^k_{m-k+i} \ne \omega^i.  \]
This implies that $d_1\ldots d_{k+m-1}\in\Sigma_{k+m-1}^\om$. Thus, we obtain (\ref{eq:lower-2}) as desired.

  (ii) Now we prove the upper bound.
  Note by (\ref{eq:Sigma-k-om}) that each $d_1d_2\ldots d_{k+m}\in \Sigma_{k+m}^\om$ satisfies $d_{k+1} d_{k+2}\ldots d_{k+m}\ne \om_1^k\om_2^k\ldots \om_m^k$. Conditioned on the maximum index $j$ such that $d_{k+j}\ne \om_j^k$ we make the following partition of $\Sigma_{k+m}^\om$ as
  \begin{equation}\label{eq:upper-1}
    \Sigma_{k+m}^\om=\bigcup_{j=1}^m \Sigma_{k+m}^\om(j),
  \end{equation}
  where for $1\le j \le m$, \[ \Sigma_{k+m}^\om(j):=\big\{d_1d_2\ldots d_{k+m}\in \Sigma_{k+m}^\om: d_{k+j}\ne \om^k_j, d_{k+j+1}\ldots d_{k+m}=\om^k_{j+1}\ldots \om^k_{m}\big\}.\]

  If $d_1\ldots d_{k+m}\in \Sigma_{k+m}^\om(j)$, then it has the form $d_1\ldots d_{k+j-1}d_{k+j}\om^k_{j+1}\om_{j+2}^k\ldots \om_m^k$ with $d_1\ldots d_{k+j-1}\in\Sigma_{k+j-1}^\om$ and $d_{k+j}\in D_b\setminus\big\{\om^k_j\big\}$. So,
 $|\Sigma_{k+m}^\om(j)|\le (b-1)|\Sigma_{k+j-1}^\om|$ for all $1\le j\le m$.
  By (\ref{eq:upper-1}) it follows that
  \begin{equation}\label{eq:upper-2}
   |\Sigma_{k+m}^\om|=\sum_{j=1}^m|\Sigma_{k+m}^\om(j)|\le (b-1)\sum_{j=0}^{m-1}|\Sigma_{k+j}^\om|.
  \end{equation}

  Next, we assume that $\om$ is progressively overlapping at position-$k$.
  Given $1\le j\le m$, we take a word $d_1d_2\ldots d_{k+j-1}\in\Sigma_{k+j-1}^\om$ and take a digit $d_{k+j}\in D_b\setminus\big\{\om^k_j\big\}$. We claim that
  \[
  c_1c_2\ldots c_{k+m}:=d_1d_2\ldots d_{k+j-1}d_{k+j} \om^k_{j+1}\om^k_{j+2}\ldots \om_m^k\in \Sigma_{k+m}^\om(j).
  \]
  By the definition of $\Sigma_{k+m}^\om(j)$ it suffices to prove that the word $c_1\ldots c_{k+m}\in \Sigma_{k+m}^\om$.
  Note that $c_{1}\ldots c_{k+j-1}=d_{1}\ldots d_{k+j-1}\in \Sigma_{k+j-1}^\om$.
  We only need to verify \begin{equation}\label{eq:april-26-1} c_{i+1}c_{i+2}\ldots c_{i+m}\ne \om_1^i\om_2^i\ldots \om_m^i\quad \forall  \max \{k+j-m,0\} \le i\le k. \end{equation}
  Since $\om$ is progressively overlapping at position-$k$, we have \[ \omega^k_1 \ldots \omega^k_{m-\ell} = \omega^{k-\ell}_{\ell+1} \ldots \omega^{k-\ell}_m\quad \forall 1 \le \ell \le \min\{ k, m-1\}. \]
  Thus, for $\max \left\{k+j-m,0\right\}\le i\le k$ we have
  \[ c_{i+(k+j-i)}=c_{k+j}=d_{k+j}\ne\om^k_j=\om_{k+j-i}^i.\]
  This implies that $c_{i+1}c_{i+2}\ldots c_{i+m}\ne \om_1^i\om_2^i\ldots \om_m^i$, which establishes (\ref{eq:april-26-1}).

   By (\ref{eq:april-26-1}) it follows that
   \[
   \set{d_1\ldots d_{k+j}\om_{j+1}^k\ldots \om_m^k: d_1\ldots d_{k+j-1}\in\Sigma_{k+j-1}^\om, d_{k+j}\in D_b\setminus\{ \om_j^k\}} \subset\Sigma_{k+m}^\om(j),
   \]
   which implies that $(b-1)|\Sigma_{k+j-1}^\om|\le |\Sigma_{k+m}^\om(j)|$. By (\ref{eq:upper-1}) we obtain that
   \[
  |\Sigma_{k+m}^\om|= \sum_{j=1}^{m}|\Sigma_{k+m}^\om(j)|\ge (b-1)\sum_{j=0}^{m-1}|\Sigma_{k+j}^\om|.
   \]
  This together with (\ref{eq:upper-2}) proves the equality.
\end{proof}

Recall that a \emph{Pisot number} is an algebraic integer greater than $1$ whose algebraic conjugates are of modulus strictly less than $1$.
\begin{lemma}\label{lem:pisot-number}
Given $b\in\N_{\ge 3}$ and $m \in \N_{\ge 2}$, let $\la$ and $\eta$   be the largest real root of
\[x^m=(b-1)(x^{m-1}+x^{m-2}+\cdots+x+1)\quad\textrm{and}\quad x^m=bx^{m-1}-1,\]
respectively. Then $b-1<\eta<\la<b$, and both $\la$ and $\eta$ are Pisot numbers.
\end{lemma}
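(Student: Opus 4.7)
My plan is to first establish the chain $b-1<\eta<\la<b$ by direct evaluation, and then to prove the Pisot property for $\la$ and $\eta$ separately by two different techniques. For the bounds, I would compute $P(b-1)=-\sum_{k=1}^{m-1}(b-1)^k<0$ and $P(b)=1>0$, where $P(x):=x^m-(b-1)(x^{m-1}+\cdots+1)$; together with Descartes's rule of signs (which shows $P$ has exactly one positive real root, since its coefficient signs are $+,-,-,\ldots,-$), these values locate $\la\in(b-1,b)$. Similarly $R(b-1)=1-(b-1)^{m-1}<0$ and $R(b)=1>0$ place $\eta\in(b-1,b)$, where $R(x):=x^m-bx^{m-1}+1$. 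For the comparison $\eta<\la$, multiplying $P(\la)=0$ by $(\la-1)$ yields $Q(\la)=0$ where $Q(x):=x^{m+1}-bx^m+(b-1)$; dividing by $\la$ gives $\la^m=b\la^{m-1}-(b-1)/\la$, whence $R(\la)=(\la-(b-1))/\la>0$. Combined with $R(1)=2-b<0$ and the fact that $\eta$ is the largest real root of $R$, this forces $\la>\eta$.

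The main obstacle is the Pisot property for $\la$: a direct application of Rouch\'e's theorem to $Q$ or to its reciprocal fails because the natural estimates give equality on the unit circle at $x=1$. My approach uses both $P$ and $Q=(x-1)P$ simultaneously. Let $x_1,\ldots,x_{m-1}$ denote the roots of $P$ other than $\la$; these are precisely the roots of $Q$ other than $\la$ and $1$. From $Q(x_i)=0$, i.e., $bx_i^m=x_i^{m+1}+(b-1)$, the triangle inequality gives $b|x_i|^m\le|x_i|^{m+1}+(b-1)$, that is, $Q(|x_i|)\ge 0$; an analysis of $Q'(x)=x^{m-1}((m+1)x-bm)$ together with $Q(0)=b-1>0$ and $Q(1)=Q(\la)=0$ shows $\{y\ge 0:Q(y)\ge 0\}=[0,1]\cup[\la,\infty)$. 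On the other hand, from $P(x_i)=0$, i.e., $x_i^m=(b-1)\sum_{k=0}^{m-1}x_i^k$, the triangle inequality gives $P(|x_i|)\le 0$; since $\la$ is the unique positive real root of $P$, this implies $|x_i|\le\la$, and the equality case of the triangle inequality forces any $|x_i|=\la$ to satisfy $x_i=\la$ (as all $x_i^k$ must then have the same argument as $x_i^0=1$, hence be positive reals). Thus $|x_i|<\la$, and combining with the dichotomy above yields $|x_i|\le 1$. Finally, if $|x_i|=1$, then equality in the triangle inequality applied to $Q(x_i)=0$ forces $x_i^{m+1}=1$ and $x_i^m=1$, whence $x_i=1$, contradicting $x_i\ne 1$. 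Therefore $|x_i|<1$ strictly, and since $\la>1$ is a root of the monic integer polynomial $P$, $\la$ is Pisot.

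For $\eta$, the argument is cleaner: I would work with the reciprocal polynomial $\hat R(x):=x^m R(1/x)=x^m-bx+1$, whose roots are exactly the reciprocals of the roots of $R$ (since $R(0)=1\ne 0$). On the unit circle, $|x^m+1|\le 2<3\le b=|{-bx}|$, so Rouch\'e's theorem applied to the splitting $\hat R=(-bx)+(x^m+1)$ shows that $\hat R$ has exactly one zero in $|x|<1$, matching that of $-bx$; the strict inequality further rules out zeros on $|x|=1$. This unique zero is necessarily $1/\eta\in(0,1)$, so the remaining $m-1$ roots $1/\mu_i$ of $\hat R$ lie in $|x|>1$, giving $|\mu_i|<1$ for every non-$\eta$ root $\mu_i$ of $R$. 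Since $\eta>1$ is a root of the monic integer polynomial $R$, $\eta$ is Pisot.
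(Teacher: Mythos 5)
Your proof is correct. The bounds and the comparison $\eta<\la$ follow the paper essentially verbatim: both you and the paper compute $R(\la)>0$ from the relation $\la^{m+1}-b\la^m+(b-1)=0$ and conclude, and your expression $R(\la)=(\la-(b-1))/\la$ is a slightly tidier way to organize the same algebra. (Both write-ups are a touch terse at the final step --- one formally needs that $R<0$ on $(\eta_1,\eta)$ where $\eta_1$ is the smaller positive root, which follows since $R$ is decreasing-then-increasing on $(0,\infty)$ and $R(b-1)<0$ places $b-1$ between the roots.) For $\eta$ you apply Rouch\'e to the reciprocal polynomial $\hat R(x)=x^m-bx+1$ with the splitting $(-bx)+(x^m+1)$, while the paper applies Rouch\'e directly to $g(x)=x^m-bx^{m-1}+1$ with the splitting $g_2-g_1=x^m-(bx^{m-1}-1)$; these are the same estimate transported by $x\mapsto 1/x$, so this part is only cosmetically different.

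Where you genuinely depart from the paper is the Pisot property for $\la$. The paper applies Rouch\'e to $\widetilde f(x)=(x-1)f(x)=x^{m+1}-bx^m+(b-1)$ on circles $|z|=r$ with $r\to 1^+$, using a derivative computation near $x=1$ to show the comparison $|bz^m-(b-1)|>|z^{m+1}|$ holds for $r$ just above $1$, and then separately rules out zeros on the unit circle and shows $z=1$ is simple. Your argument is Rouch\'e-free: writing $Q=(x-1)P$, you extract from $Q(x_i)=0$ the inequality $Q(|x_i|)\ge 0$, and from $P(x_i)=0$ the inequality $P(|x_i|)\le 0$ with strict inequality (via the equality case of the triangle inequality, forcing $x_i$ real positive otherwise). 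Combining these with the explicit shape of $Q$ on $[0,\infty)$ (namely $\{Q\ge 0\}\cap[0,\infty)=[0,1]\cup[\la,\infty)$) pins $|x_i|\le 1$, and a second equality-case analysis at $|x_i|=1$ forces $x_i^{m+1}=x_i^m=1$, hence $x_i=1$, which is excluded since $P(1)<0$. This buys you a fully elementary proof that avoids the somewhat delicate limiting contour argument, at the cost of a slightly longer case analysis; it also gives, as a by-product, that $\la$ is a simple root of $P$ and that the other roots avoid the entire closed unit disk rather than merely the circle, which the paper obtains in a less transparent way.
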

\begin{proof}
Let $f(x) := x^m-(b-1)(x^{m-1}+x^{m-2}+\cdots+x+1)$.
It is easy to verify that $f(b-1)<0$ and $f(x)\geq f(b)=1$ for all $x\geq b$.
By the intermediate value theorem the largest real root $\la \in(b-1, b)$.
Write $\widetilde{f}(x) := (x-1)f(x) = x^{m+1} - b x^m +b-1 = f_1(x) - f_2(x)$ with $f_1(x) := x^{m+1}$ and $f_2(x) := b x^m -(b-1)$.
Note that \[ \lim_{x \to 1^+} \frac{f_2(x)-1}{f_1(x)-1} = \frac{mb}{m+1} >1.\]
There is $r_0 > 1$ such that $b r^m - (b-1)=f_2(r)>f_1(r)= r^{m+1}$ for all $1 < r < r_0$.
Fix $1 < r < r_0$. For any $z \in \mathbb{C}$ with $|z| =r$, we have $$|f_2(z)|= |bz^{m}-(b-1)|\geq b r^m-(b-1) > r^{m+1}=|f_1(z)|.$$
Since $f_2(x)=b x^m-(b-1)$ has $m$ zeros in $\{ z \in \mathbb{C}:|z| < r\}$, by Rouche's theorem (cf.~\cite{Stein-Shakarchi-2003}) we obtain that $\widetilde{f}(x)$ also has $m$ zeros in $\{ z \in \mathbb{C}:|z| < r\}$.
By the arbitrariness of $r>1$, we conclude that $\widetilde{f}(x)$ has $m$ zeros in $\{ z \in \mathbb{C}:|z|\le 1\}$.
If $z \in \mathbb{C}$ with $|z| =1$ and $\widetilde{f}(z)=0$, then we have
\[ 0 = |\widetilde{f}(z)| = |z^{m+1} - b z^m +b-1| \ge |b -z| - (b-1) \ge b -|z| -(b-1) =0, \]
which implies that $z=1$.
Note by using $m\in\N_{\ge 2}, b\in\N_{\ge 3}$ that the derivative $\widetilde{f}'(1) \ne 0$. Thus, $z = 1$ is the unique zero of $\widetilde{f}$ on the unit circle. This implies that $f(x) = \widetilde{f}(x)/(x-1)$ has $m-1$ zeros inside the unit circle.
It follows that $\lambda$ is a Pisot number.

Let $g(x):=x^m-bx^{m-1}+1$.
It is easy to check $g(b-1)<0$ and $g(x)\geq g(b)=1$ for all $x\geq b$. By the intermediate value theorem the largest real root $\eta \in(b-1, b)$.
Write $g_1(x) := b x^{m-1} - 1$ and $g_2(x) := x^m$.
Note by $b\ge 3$ that $$|g_1(z)|= |bz^{m-1}-1|\geq b-1 > 1=|g_2(z)| \quad \text{for any} ~ z \in \mathbb{C} ~\text{with}~|z|=1.$$
By Rouche's theorem, $g_1(x)$ and $g(x) = g_2(x) - g_1(x)$ have the same number of zeros inside the unit circle.
Since $g_1(x)=b x^{m-1}-1 $ has $m-1$ zeros inside the unit circle, so is $g(x)$.
It follows that $\eta$ is a Pisot number.
Since $1<\la<b$ and $\la^m(\la-1)=(b-1)(\la^m-1)$, we have
\begin{align*}
 (\la-1)g(\la)&=(\la-1)(\la^m-b\la^{m-1}+1)=(b-1)(\la^m-1)+(\la-1)(1-b\la^{m-1})\\
 &=b\la^{m-1}-\la^m+\la-b=(\la^{m-1}-1)(b-\la)>0,
\end{align*}
which implies that $g(\la)>0$.
By the definition of $\eta$ we conclude that $\eta<\la$.
\end{proof}

For $\mathbf{v} = (v_1, v_2, \ldots, v_n) \in \R^n~\text{or}~\mathbb{C}^n$, let $\|\mathbf{v}\|:= \sum_{i=1}^n |v_i|$.
Set
\begin{equation}\label{eq:V}
  V :=\big\{ \mathbf v=(v_m,\ldots ,v_1) \in \R^m: \|\mathbf{v}\|=1,~ v_1 >0,~ (b-1) v_i \le v_{i+1} \le b v_i~~\forall 1\leq i< m\big\}.
\end{equation}
Let
\begin{equation}\label{eq:A-B}
  A :=
   \left(
     \begin{array}{ccccc}
    b-1 &1  & 0 & \cdots & 0 \\
    b-1 & 0 & 1 & \cdots & 0 \\
    \vdots & \vdots & \vdots & \ddots & \vdots \\
    b-1 & 0 &0 & \cdots  & 1 \\
    b-1&0 &0 &\cdots &0
     \end{array}
   \right)_{m \times m},\quad
  B :=
   \left(
     \begin{array}{ccccc}
    b &1  & 0 & \cdots & 0 \\
    0 & 0 & 1 & \cdots & 0 \\
    \vdots & \vdots & \vdots & \ddots & \vdots \\
    0 & 0 &0 & \cdots  & 1 \\
    -1&0 &0 &\cdots &0
     \end{array}
   \right)_{m \times m}.
\end{equation}
It's easy to check that the characteristic polynomials of $A$ and $B$ are $f(x) =x^m-(b-1)(x^{m-1}+x^{m-2}+\cdots+x+1)$ and $g(x) =x^m-bx^{m-1}+1$, respectively. So, by Lemma \ref{lem:pisot-number} it follows that $\la$ and $\eta$ are the spectral radii of $A$ and $B$, respectively.

\begin{lemma}\label{lem:bound-A-B}
There exists a constant $C_1 > 1$, depending on $m$ and $b$, such that for all $n \in \N$ and for all $\mathbf v\in V$ we have
$$C_1^{-1}\lambda^{n} \le \| \mathbf v A^n \|  \le C_1 \lambda^{n} \quad\text{and}\quad  C_1^{-1} \eta^{n} \le \| \mathbf v B^n \|  \le C_1  \eta^{n},$$
where $\lambda$ and $\eta$ are defined as in Theorem \ref{thm:H=P=B=AOP}.
\end{lemma}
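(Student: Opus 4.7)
The plan is to exploit the Pisot property of $\la$ and $\eta$ established in Lemma~\ref{lem:pisot-number} through a spectral decomposition of $A$ and $B$. Since $\la$ is a simple eigenvalue of $A$ (its characteristic polynomial is $f$) dominating all other eigenvalues in modulus, I can write $A^n = \la^n \Pi_A + M_A^n$, where $\Pi_A = \mathbf{w}_A \mathbf{u}_A/(\mathbf{u}_A \mathbf{w}_A)$ is the rank-one spectral projector built from the right and left eigenvectors $\mathbf{w}_A, \mathbf{u}_A$ of $A$ for $\la$, and $\|M_A^n\|_{\mathrm{op}} \le C_0 \rho_A^n$ for some $\rho_A \in (0,\la)$ by Gelfand's formula. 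An analogous decomposition $B^n = \eta^n \Pi_B + M_B^n$ holds with $\|M_B^n\|_{\mathrm{op}} \le C_0 \rho_B^n$ and $\rho_B < \eta$. The upper bounds $\|\mathbf{v} A^n\| \le C \la^n$ and $\|\mathbf{v} B^n\| \le C \eta^n$ are then immediate from $\|\mathbf{v}\| = 1$ and the triangle inequality.

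For the lower bounds, the crux is to show that $\delta_A := \inf_{\mathbf{v}\in V}\|\mathbf{v} \Pi_A\| > 0$ and $\delta_B := \inf_{\mathbf{v}\in V}\|\mathbf{v} \Pi_B\| > 0$. First I would observe that $V$ is a compact subset of the strictly positive orthant: the constraints $v_1 > 0$ and $v_{i+1} \ge (b-1) v_i$ propagate to give $v_i > 0$ for every $i$, and combined with $\sum v_i = 1$ yield uniform bounds such as $v_m \ge (b-1)^m/(b^m-1)$. The case of $A$ is then routine via Perron-Frobenius: $A$ is nonnegative and irreducible (its first column is strictly positive, making the underlying directed graph strongly connected), so both $\mathbf{u}_A$ and $\mathbf{w}_A$ have strictly positive entries; hence $\mathbf{v} \cdot \mathbf{w}_A > 0$ on $V$, and compactness supplies $\delta_A > 0$.

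The case of $B$ is the main technical obstacle, because $B$ is not nonnegative and its right $\eta$-eigenvector $\mathbf{w}_B$ has mixed signs. A direct calculation gives
\[ \mathbf{u}_B = (1,\eta^{-1},\ldots,\eta^{-(m-1)}), \qquad \mathbf{w}_B = \bigl(1,\, -(b-\eta),\, -(b-\eta)\eta,\, \ldots,\, -(b-\eta)\eta^{m-2}\bigr)^T, \]
together with the Pisot identity $(b-\eta)\eta^{m-1} = 1$ coming from $g(\eta) = 0$. Using this identity the inner product for $\mathbf{v} = (v_m,\ldots,v_1) \in V$ simplifies to
\[ \mathbf{v} \cdot \mathbf{w}_B \;=\; v_m - \sum_{j=1}^{m-1} v_j\,\eta^{-j}. \]
The constraint $v_j \le v_m/(b-1)^{m-j}$ (from $v_{j+1} \ge (b-1)v_j$), together with summation of the resulting geometric series, bounds the correction by $v_m \cdot U$ where $U = (1-((b-1)/\eta)^{m-1})/((b-1)^{m-1}(\eta-b+1))$. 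Substituting $((b-1)/\eta)^{m-1} = (b-1)^{m-1}(b-\eta)$ and simplifying, the inequality $U < 1$ collapses to $(b-1)^{m-1} > 1$, which is automatic since $b \ge 3$ and $m \ge 2$. Hence $\mathbf{v} \cdot \mathbf{w}_B \ge v_m(1-U)$ is bounded below by a positive constant on $V$, giving $\delta_B > 0$.

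Finally, to upgrade the spectral lower bound to one valid for every $n \in \N$ I would split the range. For $n \ge N_0$ with $N_0$ large enough that $C_0 \rho_A^n \le \tfrac12 \delta_A \la^n$ (and similarly for $B$), the decomposition yields $\|\mathbf{v} A^n\| \ge \tfrac12 \delta_A \la^n$ directly. For each of the finitely many $n < N_0$ the continuous map $\mathbf{v} \mapsto \|\mathbf{v} A^n\|$ is strictly positive on the compact set $V$ (positivity because $A \ge 0$ and $(\mathbf{v} A)_1 = (b-1)\|\mathbf{v}\| > 0$, which propagates under iteration), so compactness supplies a uniform positive lower bound. The argument for $B$ is parallel; the nonvanishing of $\mathbf{v} B^n$ on $V$ follows from forward invariance of $V$ under $\mathbf{v} \mapsto \mathbf{v} B/\|\mathbf{v} B\|$, which I would verify by computing $\mathbf{v} B = (bv_m-v_1,\, v_m,\, v_{m-1},\, \ldots,\, v_2)$ and checking the ratio constraints, using $bv_m > v_1$ (itself a consequence of $v_m \ge (b-1)^{m-1} v_1$ and $b(b-1)^{m-1} > 1$). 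Choosing $C_1$ large enough to majorise the constants produced for both matrices completes the proof.
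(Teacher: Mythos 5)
Your proof is correct but takes a genuinely different route from the paper's for the harder matrix $B$. For $A$ both you and the paper rest on Perron--Frobenius: the paper cites the sharp entrywise estimate $c^{-1}\la^k\le (A^k)_{i,j}\le c\la^k$ for primitive matrices and combines it with the uniform bound $\delta_0\le v_i<1$ on $V$, while you phrase the same content as a rank-one spectral projection plus a Gelfand remainder; these are essentially equivalent. For $B$, however, the paper avoids computing eigenvectors entirely: it defines $v_{n+m}=bv_{n+m-1}-v_n$, proves by induction that $(v_n)$ is strictly increasing so that $\|\mathbf v B^n\|\ge\|\mathbf v\|=1$ for all $n$, and then argues by contradiction (if the coefficient $c_0(\mathbf v)$ of the $\eta$-eigendirection vanished, then $\|\mathbf v B^n\|\to 0$ by the Pisot property) before invoking compactness of $V$ to make the bound uniform. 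You instead compute the left and right $\eta$-eigenvectors explicitly, use the Pisot identity $(b-\eta)\eta^{m-1}=1$ to collapse $\mathbf v\cdot\mathbf w_B$ to $v_m-\sum_{j=1}^{m-1}v_j\eta^{-j}$, and then bound this below by $v_m(1-U)$ with $U<1$ reducing to $(b-1)^{m-1}>1$ --- a direct, quantitative estimate replacing the paper's indirect contradiction argument. Both routes handle small $n$ by an auxiliary device: the paper reuses its monotonicity fact $\|\mathbf v B^n\|\ge 1$, whereas you appeal to compactness and the nonvanishing of $\mathbf v B^n$ via forward invariance of $V$. Your version is more explicit and yields effective constants, at the cost of checking the eigenvector formulas; the paper's version is shorter and cleaner because the monotonicity observation does the work of your inner-product estimate. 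One small point worth flagging: the spectral decomposition $B^n=\eta^n\Pi_B+M_B^n$ with $\Pi_B$ rank one presupposes that $\eta$ is a \emph{simple} eigenvalue, which does not follow from the Pisot property alone but is established in the paper's proof (by the Rouch\'e count: exactly $m-1$ roots of $g$ lie inside the unit disk, leaving exactly one simple root outside, and separately $g$ and $g'$ share no root); you should state this step rather than leave it implicit.
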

\begin{proof}
  We first handle the matrix $A$. It's easy to verify that each entry in $A^m$ is positive, i.e., $A$ is a primitive nonnegative matrix (cf. \cite[Theorem 4.5.8]{Lind_Marcus_1995}). It follows from \cite[Theorem 4.5.12]{Lind_Marcus_1995} that there exists a constant $c>1$ depending on $m$ and $b$ such that for any $1\le i, j\le m$,
\begin{equation*}\label{eq:recursion-1}
  c^{-1} \la^k\le (A^k)_{i,j}\le c \la^k\quad \forall k\in\N,
\end{equation*}
where $\lambda$ is the largest real root of $x^m=(b-1)(x^{m-1}+x^{m-2}+\cdots+x+1)$.
Note that for any $\mathbf v=(v_m,\ldots ,v_1) \in V$ we have
$\delta_0 \le v_i < 1$ for all $1 \le i \le m,$
 where $\delta_0=(1+b+\cdots+b^{m-1})^{-1}>0$.
Therefore, we conclude that for $n \in \N$ and $\mathbf v\in V$,
$$ c^{-1} m^2 \delta_0 \lambda^{n} \le \| \mathbf v A^n \|  \le c m^2  \lambda^{n}.$$

Next, we focus on the matrix $B$. We first show that for any $\mathbf{v}\in V$ the sequence $\{\|\mathbf{v} B^n\|\}_{n=0}^\f$ is increasing.
Write $\mathbf{v}=(v_m, \ldots, v_1) \in V$.
For $n \in \N$, we recursively define $v_{n+m} = b v_{n+m-1} - v_{n}$.
Then we have $\mathbf{v} B^n = (v_{n+m}, \ldots, v_{n+1})$ for all $n \in \N_0$.
Note that $v_m > v_{m-1} >\cdots> v_1$. Then $v_{m+1} = b v_m - v_1= (b-1) v_m + (v_m -v_1) > v_m$. By induction, we can show that the sequence $\{v_n\}_{n=1}^\f$ is increasing. It follows that the sequence $\{\|\mathbf{v} B^n\|\}_{n=0}^\f$ is increasing.

Write $g(x) =x^m-bx^{m-1}+1$ for the characteristic polynomial of $B$.
Note that all roots of equation $g'(x) =0$ are $x=0$ and $x=b(m-1)/m >1$.
By Lemma \ref{lem:pisot-number}, $x=\eta$ is the unique root of equation $g(x)=0$ outside the unit circle.
Thus, the equations $g(x)=0$ and $g'(x)=0$ do not share a common root.
This means that the equation $g(x)=0$ has $m$ distinct roots in $\mathbb{C}$, denoted by $\eta, \eta_1,\ldots,\eta_{m-1}$.
By Lemma \ref{lem:pisot-number}, we have $|\eta_i| < 1$ for all  $1\le i \le m-1$.
Let $\mathbf{v}^0,\mathbf{v}^1,\mathbf{v}^2, \ldots, \mathbf{v}^{m-1} \in \mathbb{C}^{m}$ with $\|\mathbf{v}_i\|=1$ be the left eigenvectors corresponding to $\eta,\eta_1,\eta_2,\ldots,\eta_{m-1}$, respectively. That is, $\mathbf{v}^0 B = \eta \mathbf{v}^0$ and $\mathbf{v}^i B = \eta_i \mathbf{v}^i$ for all $1 \le i \le m-1$.
Then $\mathbf{v}^0,\mathbf{v}^1,\mathbf{v}^2, \ldots, \mathbf{v}^{m-1}$ are linearly independent in $\mathbb{C}^m$.
So, each $\mathbf{v}\in V$ can be uniquely written as
\begin{equation*}\label{eq:uniquely written}
  \mathbf{v} = c_0(\mathbf{v}) \mathbf{v}^0 + c_1(\mathbf{v})\mathbf{v}^1+\cdots +c_{m-1}(\mathbf{v}) \mathbf{v}^{m-1} \quad\text{with each}\quad c_i(\mathbf{v}) \in \mathbb{C}.
\end{equation*}
We claim that $ c_0(\mathbf{v}) \ne 0$ for any  $\mathbf{v}\in V$.

Suppose on the contrary $c_0(\mathbf{v}) = 0$ for some $\mathbf{v}=(v_m,\ldots,v_1)\in V$. Then we have
\[ \mathbf{v} B^n = c_1(\mathbf{v})\eta_1^n\mathbf{v}^1+\cdots +c_{m-1}(\mathbf{v}) \eta_{m-1}^n \mathbf{v}^{m-1} \quad \forall n\in\mathbb{N}. \]
Note that $|\eta_i| < 1$ for all  $1\le i \le m-1$. We obtain that
\[\lim_{n\to \f} \| \mathbf{v} B^n\| =0.\]
However, we have proved that $\| \mathbf{v} B^n\|\ge\|\mathbf{v}\|=1$.
This leads to a contradiction.

Note that $V$ is a compact set in $\mathbb{C}^m$, and for $0\le i \le m-1$ the coefficient $c_i(\mathbf{v})$ is continuous as a function of $\mathbf{v}$.
It follows from the claim that \[ c_0:=\inf\{ |c_0(\mathbf{v})|: \mathbf{v} \in V \} >0\quad\text{and}\quad {\widetilde{c}_0:=\sup\{ |c_0(\mathbf{v})|: \mathbf{v} \in V \}<\f.} \]
Note that $\sup\{ |c_i(\mathbf{v})|: \mathbf{v} \in V,~1 \le i \le m-1\}<\f$.
For $\mathbf{v} \in V$, we have $\mathbf{v} B^n =c_0(\mathbf{v})\eta^n \mathbf{v}^0+\sum_{i=1}^{m-1} c_i(\mathbf{v})\eta_i^n \mathbf{v}^i$. Therefore, there exists $n_0 \in \N$ such that for all $n \ge n_0$ and for all $\mathbf{v} \in V$,
\[ \frac{1}{2} c_0 \eta^n \le \|\mathbf{v} B^n\| \le \frac{3}{2} \widetilde{c}_0 \eta^n. \]
Furthermore, for $1 \le n \le n_0$ and $\mathbf{v} \in V$, we have
\[ \frac{1}{\eta^{n_0}} \eta^n \le 1 \le \|\mathbf{v} B^n\| \le \|\mathbf{v} B^{n_0}\| \le \frac{3}{2} \widetilde{c}_0 \eta^{n_0} \eta^n. \]
Hence, by taking $C_1=\max\{ \eta^{n_0}, 3\widetilde{c}_0 \eta^{n_0}/2, 2/c_0, 3\widetilde{c}_0/2, c/(m^2 \delta_0), cm^2\}$, we complete the proof.
\end{proof}

\begin{proposition}\label{prop:TD-PO}
  There exists a constant $C_2 >1$ such that  for any progressively overlapping sequence $\zeta \in (D_b^m)^{\N_0}$ and for any totally distinct sequence $\xi \in (D_b^m)^{\N_0}$, we have
  \[  C_2^{-1} \la^k \le |\Sigma_k^\zeta| \le C_2 \la^k  \quad\text{and}\quad C_2^{-1} \eta^k \le |\Sigma_k^\xi| \le C_2 \eta^k \quad \forall k \in \N,\]
where $\la$ and $\eta$ are defined as in Theorem  \ref{thm:H=P=B=AOP}.
\end{proposition}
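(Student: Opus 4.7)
The plan is to use Lemma \ref{lem:bound-cardinality} to obtain an \emph{exact} linear recurrence for $|\Sigma_k^\om|$ in each of the two regimes, recast the recurrence as right-multiplication by the matrices $A$ and $B$ of (\ref{eq:A-B}), and then invoke Lemma \ref{lem:bound-A-B} to conclude the uniform comparability with $\lambda^k$ and $\eta^k$.

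First I would set $\mathbf v_k^\om := (|\Sigma_{k+m-1}^\om|, |\Sigma_{k+m-2}^\om|, \ldots, |\Sigma_k^\om|)$ for each $k \ge 1$. For a totally distinct $\xi$, Lemma \ref{lem:bound-cardinality} gives the equality $|\Sigma_{k+m}^\xi| = b|\Sigma_{k+m-1}^\xi| - |\Sigma_k^\xi|$ at every position $k \ge 1$, and a direct column-by-column inspection of $B$ then yields $\mathbf v_{k+1}^\xi = \mathbf v_k^\xi B$, so iterating gives $\mathbf v_k^\xi = \mathbf v_1^\xi B^{k-1}$. For a progressively overlapping $\zeta$, the parallel equality $|\Sigma_{k+m}^\zeta| = (b-1)\sum_{j=0}^{m-1}|\Sigma_{k+j}^\zeta|$ from Lemma \ref{lem:bound-cardinality} gives $\mathbf v_{k+1}^\zeta = \mathbf v_k^\zeta A$ by the same check, whence $\mathbf v_k^\zeta = \mathbf v_1^\zeta A^{k-1}$.

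The crucial observation is that by (\ref{eq:Sigma-k-om}) the initial vector $\mathbf v_1 = (b^m - 1,\, b^{m-1},\, b^{m-2},\, \ldots,\, b)$ is the \emph{same} for every $\om \in (D_b^m)^\N$. Normalising and checking the constraints defining $V$ in (\ref{eq:V}), the interior inequalities $(b-1)b^i \le b^{i+1} \le b \cdot b^i$ for $1 \le i < m-1$ are trivial, and the boundary inequalities $(b-1)b^{m-1} \le b^m - 1 \le b \cdot b^{m-1}$ both hold because $b \ge 3$ and $m \ge 2$. Hence $\mathbf v_1/\|\mathbf v_1\| \in V$, and applying Lemma \ref{lem:bound-A-B} produces a constant depending only on $b$ and $m$ such that $\|\mathbf v_k^\zeta\| \asymp \lambda^{k-1}$ and $\|\mathbf v_k^\xi\| \asymp \eta^{k-1}$ for all $k \ge 1$, uniformly in $\zeta$ and $\xi$.

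To finish, Lemma \ref{lem:b-1<b} gives $|\Sigma_k^\om| \le \|\mathbf v_k^\om\| \le (1 + b + \cdots + b^{m-1})|\Sigma_k^\om|$, so the two-sided comparability transfers from $\|\mathbf v_k^\om\|$ to $|\Sigma_k^\om|$ after absorbing the $b$--$m$ dependent factors and the scalar $\|\mathbf v_1\|$ into a single constant $C_2$; the finitely many small values $1 \le k \le m$ are handled trivially since $|\Sigma_k^\om|$, $\lambda^k$ and $\eta^k$ are all bounded above and below by positive constants there. I expect the only real care lies in the $V$-membership verification for $\mathbf v_1/\|\mathbf v_1\|$ and the bookkeeping needed to convert $\lambda^{k-1}$ (respectively $\eta^{k-1}$) into $\lambda^k$ (respectively $\eta^k$); once the matrix set-up is in place the argument reduces to a single application of Lemma \ref{lem:bound-A-B}.
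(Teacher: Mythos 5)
Your proof is correct and follows essentially the same route as the paper: exact recurrences from Lemma \ref{lem:bound-cardinality}, recast as right-multiplication by $A$ or $B$, $V$-membership of the normalised initial vector, then Lemma \ref{lem:bound-A-B} followed by Lemma \ref{lem:b-1<b} to pass from $\|\mathbf v_k\|$ to $|\Sigma_k^\om|$. The only cosmetic differences are a shift of one in the indexing of the vectors and that you verify $\mathbf v_1/\|\mathbf v_1\|\in V$ by the explicit formula $(b^m-1,b^{m-1},\ldots,b)$ whereas the paper invokes Lemma \ref{lem:b-1<b}; both are fine.
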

\begin{proof}
By Lemma \ref{lem:bound-cardinality} we have
$ |\Sigma_{k+m}^\zeta|=(b-1)\sum_{j=0}^{m-1}|\Sigma_{k+j}^\zeta|$ for $ k\in\N.$
The recurrence relation can be rewritten as
$$(|\Sigma_{k+m}^\zeta|, |\Sigma_{k+m-1}^\zeta|, \ldots, |\Sigma_{k+1}^\zeta|) = (|\Sigma_{k+m-1}^\zeta|, |\Sigma_{k+m-2}^\zeta|,\ldots, |\Sigma_k^\zeta|)A,$$
where $A$ is defined in (\ref{eq:A-B}).
It follows that
\begin{equation}\label{eq:recursion}
  (|\Sigma_{k+m}^\zeta|, |\Sigma_{k+m-1}^\zeta|,\ldots, |\Sigma_{k+1}^\zeta|)=(|\Sigma_m^\zeta|, |\Sigma_{m-1}^\zeta|, \ldots, |\Sigma_1^\zeta|)A^k\quad\forall k\in\N.
\end{equation}
Write $\mathbf{v}_k = (|\Sigma_{k+m}^\zeta|, |\Sigma_{k+m-1}^\zeta|,\ldots, |\Sigma_{k+1}^\zeta|)$ for $k \in \N_0$.
By Lemma \ref{lem:b-1<b}, we have $\mathbf{v}_0/\|\mathbf{v}_0\| \in V$.
By Lemma \ref{lem:bound-A-B} and (\ref{eq:recursion}), there exists a constant $C_1>1$ such that \[ C_1^{-1}\|\mathbf{v}_0\| \lambda^k \le \|\mathbf{v}_k\| \le C_1\|\mathbf{v}_0\| \lambda^k \quad\forall k\in\N. \]
By Lemma \ref{lem:b-1<b}, we have \[ |\Sigma_{k}^\zeta| \le  |\Sigma_{k+1}^\zeta| \le \|\mathbf{v}_k\| =\sum_{j=1}^m |\Sigma_{k+j}^\zeta| \le |\Sigma_{k}^\zeta| \sum_{j=1}^{m} b^j \le  b^{m+1} |\Sigma_{k}^\zeta|. \]
Therefore, we conclude that
\[ \frac{\|\mathbf{v}_0\|}{C_1 b^{m+1}} \lambda^k  \le |\Sigma_{k}^\zeta| \le C_1\|\mathbf{v}_0\| \lambda^k\quad\forall k\in\N. \]

Using the same argument, by Lemmas \ref{lem:bound-cardinality} and \ref{lem:bound-A-B}, we can also show that
\[ \frac{\|\widetilde{\mathbf{v}}_0\|}{C_1 b^{m+1}} \eta^k  \le |\Sigma_{k}^\xi| \le C_1\|\widetilde{\mathbf{v}}_0\| \eta^k\quad\forall k\in\N, \]
where $\widetilde{\mathbf{v}}_0=(|\Sigma_m^\xi|, |\Sigma_{m-1}^\xi|, \ldots, |\Sigma_1^\xi|)$.
\end{proof}

\begin{remark}\label{rem:TD-PO}
For $n\in \mathbb{N}$, let  $\zeta \in (D_b^m)^{\N_0}$ be progressively overlapping at position-$k$ for all $1\leq k\leq n$, and let  $\xi \in (D_b^m)^{\N_0}$ be totally distinct at position-$k$ for all $1\leq k\leq n$.
Then by the proof of Proposition \ref{prop:TD-PO}, there exists a constant $C_2 >1$ such that
  \[  C_2^{-1} \la^k \le |\Sigma_k^\zeta| \le C_2 \la^k  \quad\text{and}\quad C_2^{-1} \eta^k \le |\Sigma_k^\xi| \le C_2 \eta^k \quad \forall 1\leq k \leq n+m.\]
\end{remark}

\begin{proof}[Proof of Theorem \ref{thm:H=P=B=AOP}]
It  follows directly from Propositions \ref{prop:Ahlfors-regular} and \ref{prop:TD-PO}.
\end{proof}

\section{Intermediate value property for dimensions of $K^\om$}\label{sec:intermediate-dimension}

In this section we will show the intermediate value property for Hausdorff and packing dimensions of $K^\om$, and prove Theorem \ref{thm:intermediate value}.
In the following we always let $b-1 < \eta < \la <b$ satisfy the equations
\[x^m-bx^{m-1}+1=0 \quad \text{and}\quad x^m-(b-1)(x^{m-1}+x^{m-2}+\cdots+x+1)=0,\]
respectively.

\begin{lemma}\label{lem:bound-cardinality-2}
  There exists a constant $C_2>1$ such that for any $\om \in (D_b^m)^{\mathbb{N}}$  we have
  \[ C_2^{-1} \eta^k \le |\Sigma_k^\om| \le C_2 \la^k \quad \forall k \in \N.\]
\end{lemma}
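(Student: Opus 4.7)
The plan is to establish the two bounds separately by comparing $|\Sigma_k^\om|$ with sequences satisfying the linear recurrences whose characteristic polynomials have dominant roots $\la$ and $\eta$ (cf.~Lemma \ref{lem:pisot-number}). Write $a_k := |\Sigma_k^\om|$ throughout.

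For the upper bound, I would use the right inequality $a_{k+m} \le (b-1)\sum_{j=0}^{m-1} a_{k+j}$ from Lemma \ref{lem:bound-cardinality}. Introduce an auxiliary sequence $\tilde a_k$ defined by $\tilde a_k := b^k$ for $1 \le k \le m$ and extended via the \emph{equality} $\tilde a_{k+m} = (b-1)\sum_{j=0}^{m-1} \tilde a_{k+j}$ for $k\ge 1$. Since $a_k \le b^k = \tilde a_k$ for $1 \le k \le m$ by Lemma \ref{lem:b-1<b} (trivially), a straightforward block induction yields $a_k \le \tilde a_k$ for every $k \in \N$. The characteristic polynomial of the recurrence is $x^m - (b-1)(x^{m-1}+\cdots+1)$, which by Lemma \ref{lem:pisot-number} has $\la$ as a simple dominant real root and all other roots strictly inside the unit disk (by the Pisot property). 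Hence the standard theory of linear recurrences produces $\tilde a_k \le C\la^k$, where $C$ depends only on $b$ and $m$ (the initial data $\tilde a_1,\ldots,\tilde a_m$ being determined by $b,m$).

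The lower bound is the delicate part, since the inequality $a_{k+m} \ge b a_{k+m-1} - a_k$ from Lemma \ref{lem:bound-cardinality} carries a negative term and so cannot be iterated directly against a nonnegative comparison sequence. The key idea is to absorb the $-a_k$ using the elementary upper bound $a_{k+m-1} \le b^{m-1} a_k$ obtained by iterating Lemma \ref{lem:b-1<b} exactly $m-1$ times, that is, $a_k \ge b^{-(m-1)} a_{k+m-1}$. Substituting, we get the clean ratio bound
\[ a_{k+m} \ge b a_{k+m-1} - b^{-(m-1)} a_{k+m-1} = \rho\, a_{k+m-1}, \qquad \rho := b - b^{-(m-1)}, \]
valid for all $k \ge 1$. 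Iterating gives $a_j \ge \rho^{\,j-m} a_m$ for $j \ge m$.

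It remains to compare $\rho$ with $\eta$. Rewriting the defining equation $\eta^m - b \eta^{m-1} + 1 = 0$ as $\eta = b - \eta^{-(m-1)}$ and using $\eta < b$ (from Lemma \ref{lem:pisot-number}), we obtain $\eta^{-(m-1)} > b^{-(m-1)}$ and hence $\eta < \rho$. Therefore $a_j \ge \rho^{\,j-m} \ge \eta^{\,j-m}$ for $j \ge m$. Choosing $C_2$ sufficiently large to absorb the factor $\eta^{-m}$ and to handle the trivial cases $1 \le j \le m$ (where $a_j \ge 1$) produces a uniform constant depending only on $b$ and $m$. Combined with the upper-bound constant above and taking the maximum, one obtains the $C_2$ of the lemma. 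The hardest step is thus the seemingly innocent trick of invoking the universal upper bound on the one-step growth rate to convert the subtractive lower bound of Lemma \ref{lem:bound-cardinality} into a pure multiplicative ratio estimate.
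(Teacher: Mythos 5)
The upper-bound part of your argument is correct and in substance matches the paper: the paper compares $|\Sigma_k^\om|$ to $|\Sigma_k^\zeta|$ for a progressively overlapping $\zeta$ (which by Lemma \ref{lem:bound-cardinality} attains equality in $|\Sigma_{k+m}^\zeta|=(b-1)\sum_{j=0}^{m-1}|\Sigma_{k+j}^\zeta|$), which is exactly your auxiliary sequence $\tilde a_k$ up to the initial value $\tilde a_m$; the block induction then gives $|\Sigma_k^\om|\le|\Sigma_k^\zeta|\le C\la^k$ via Proposition \ref{prop:TD-PO}.

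The lower bound, however, has a genuine gap, and in fact the approach cannot be made to work as stated. You need to \emph{lower}-bound $b\,a_{k+m-1}-a_k$, which requires an \emph{upper} bound on $a_k$ in terms of $a_{k+m-1}$. But the inequality you invoke, $a_k\ge b^{-(m-1)}a_{k+m-1}$, is a \emph{lower} bound on $a_k$: it yields $b\,a_{k+m-1}-a_k\le\rho\,a_{k+m-1}$, not $\ge$, so you cannot chain it with $a_{k+m}\ge b\,a_{k+m-1}-a_k$ to obtain $a_{k+m}\ge\rho\,a_{k+m-1}$. The upper bound actually available from Lemma \ref{lem:b-1<b} is $a_k\le(b-1)^{-(m-1)}a_{k+m-1}$, which produces the ratio $\rho'=b-(b-1)^{-(m-1)}$. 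Unfortunately $\rho'<\eta$: since $\eta>b-1$ one has $\eta^{-(m-1)}<(b-1)^{-(m-1)}$, hence $\eta=b-\eta^{-(m-1)}>b-(b-1)^{-(m-1)}=\rho'$. So even after correcting the sign you only get $a_k\gtrsim\rho'^{\,k}$, which is strictly weaker than the required $\eta^k$. (Your comparison of $\rho$ with $\eta$ goes the other way, $\eta<\rho$, precisely because $\rho$ was obtained by misusing the inequality.)

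The paper sidesteps this by comparing with $|\Sigma_k^\xi|$ for a totally distinct $\xi$ (for which Lemma \ref{lem:bound-cardinality} gives \emph{equality} $|\Sigma_{k+m}^\xi|=b|\Sigma_{k+m-1}^\xi|-|\Sigma_k^\xi|$) and running the induction on the \emph{difference} $\Delta_k:=|\Sigma_k^\om|-|\Sigma_k^\xi|$ with the strengthened hypothesis $\Delta_{j+1}\ge\eta\,\Delta_j$ for $j<k+m-1$. That hypothesis provides the \emph{correct} control $\Delta_k\le\eta^{-(m-1)}\Delta_{k+m-1}$ on the subtracted term, and then $\Delta_{k+m}\ge b\,\Delta_{k+m-1}-\Delta_k\ge\big(b-\eta^{-(m-1)}\big)\Delta_{k+m-1}=\eta\,\Delta_{k+m-1}$, closing the induction. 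Since $\Delta_1=0$ this gives $|\Sigma_k^\om|\ge|\Sigma_k^\xi|\ge C_2^{-1}\eta^k$. The crucial idea you are missing is that the one-step ratio bound of size $\eta^{-(m-1)}$ is not available for the raw counting sequence (only the weaker $(b-1)^{-(m-1)}$ is), but it \emph{is} available for the difference sequence as part of the induction hypothesis; this is what lets the true rate $\eta$ emerge.
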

\begin{proof}
Let $\xi$ and $\zeta$ be  totally distinct and  progressively overlapping sequences in $(D_b^m)^{\N_0}$, respectively.
By Proposition \ref{prop:TD-PO}, it suffices to show that
\[ |\Sigma^\xi_k| \le |\Sigma_k^\om| \le |\Sigma_k^\zeta| \quad \forall k \in \N.  \]
Clearly, for all $1 \le k \le m$ we have {$|\Sigma^\xi_k| = |\Sigma_k^\om| = |\Sigma_k^\zeta|$}.

Assume that for some $k \in \N$ we have $|\Sigma_{j}^\om| \le |\Sigma_{j}^\zeta|$ for all $1 \le j < k+m$. Then
by Lemma \ref{lem:bound-cardinality} we obtain
$$|\Sigma_{k+m}^\om| \le  (b-1)\sum_{j=0}^{m-1}|\Sigma_{k+j}^\om| \le (b-1)\sum_{j=0}^{m-1}|\Sigma_{k+j}^\zeta|=|\Sigma_{k+m}^\zeta|.$$
By induction, we conclude that $|\Sigma_k^\om| \le |\Sigma_k^\zeta|$ for all $k \in \N$.

Note that $\eta$ satisfies the equation $x^m-bx^{m-1}+1=0$, i.e., $b-\eta^{-(m-1)} = \eta$.
Assume that for some $k \in \N$ we have $|\Sigma_{j+1}^\om|- |\Sigma_{j+1}^\xi|\ge \eta (|\Sigma_{j}^\om|- |\Sigma_{j}^\xi|) $ for all $1 \le j < k+m-1$.
Then by Lemma \ref{lem:bound-cardinality} we have
\begin{align*}
 |\Sigma_{k+m}^\om|- |\Sigma_{k+m}^\xi|&\ge  b(|\Sigma_{k+m-1}^\om|- |\Sigma_{k+m-1}^\xi|) -(|\Sigma_{k}^\om|- |\Sigma_{k}^\xi|)\\
 &\ge b(|\Sigma_{k+m-1}^\om|- |\Sigma_{k+m-1}^\xi|) -\eta^{-(m-1)}(|\Sigma_{k+m-1}^\om|- |\Sigma_{k+m-1}^\xi|)\\
 &=\eta(|\Sigma_{k+m-1}^\om|- |\Sigma_{k+m-1}^\xi|),
\end{align*}
By induction, we conclude that $|\Sigma_{k+1}^\om|- |\Sigma_{k+1}^\xi|\ge \eta (|\Sigma_{k}^\om|- |\Sigma_{k}^\xi|)$ for all $k \in \N$. Note that $\eta>0$ and $|\Sigma_{1}^\om|= |\Sigma_{1}^\xi|$. Thus we obtain that $|\Sigma_k^\om| \ge |\Sigma_k^\xi|$ for all $k \in \N$.
\end{proof}

Take $\om\in(D_b^m)^{\N_0}$. Recall from (\ref{eq:Gyn}) that for $\sc=c_1 c_2 \ldots c_k\in\Sigma_k^{\om}$ and $n\in\N$ we define
\[ \Sigma_{k+n}^\om(\sc) = \set{d_{1}d_{2}\ldots d_{k+n}\in \Sigma_{k+n}^{\om}:   d_1 d_2 \ldots d_k=\sc}. \]
For $k \ge m$ and $n \ge m$, by (\ref{eq:nov28-1}) and (\ref{eq:nov28-2}) in the proof of Lemma \ref{lem:key-estimate} we have showed that $ |\Sigma^\tau_{n-m+1}| \le |\Sigma_{k+n}^\om(\sc)| \le b^{m-1}|\Sigma^\tau_{n-m+1}|$ for some $\tau \in (D_b^m)^{\N_0}$.
By Lemma \ref{lem:bound-cardinality-2}, for $k \ge m$ and $n \ge m$ we have
\begin{equation}\label{eq:bound-c}
  C_2^{-1} \eta^{n-m+1} \le |\Sigma_{k+n}^\om(\sc)| \le C_2 b^{m-1} \la^{n-m+1}.
\end{equation}

For a bounded set $E \subset \R^n$, the Assouad   and lower dimensions of $E$ are defined by (cf.~\cite{Fraser-2021})
\begin{align*}
  \dim_A E  &:= \inf \left\{ s \geq 0 : \exists C > 0,   \rho>0,   \forall 0 < r < R < \rho,  \sup_{x\in E} N_r(B(x, R) \cap E) \leq C \left(\frac{R}{r}\right)^s \right\},\\
  \dim_L E  &:= \sup \left\{ s \geq 0 : \exists C > 0,   \rho>0,   \forall 0 < r < R < \rho,  \inf_{x\in E} N_r(B(x, R) \cap E) \geq C \left(\frac{R}{r}\right)^s \right\},
\end{align*}
where $N_r(F)$ is the smallest number of balls of diameter $r$ required to cover a bounded set $F$.

\begin{proposition}\label{prop:bound-dimension}
For any $\om \in (D_b^m)^{\mathbb{N}_0}$  we have
\begin{equation*}
  \frac{\log \eta}{\log b}  \leq \dim_L K^\om \le \dim_H K^\om\le \dim_P K^{\om} \le \dim_A K^\om \le \frac{\log \lambda}{\log b}.
\end{equation*}
\end{proposition}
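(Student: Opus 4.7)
The chain of inequalities $\dim_L K^\om \le \dim_H K^\om \le \dim_P K^\om \le \dim_A K^\om$ is the standard relation recorded in (\ref{eq:fractal-dimension}). Thus the plan reduces to establishing the two outer bounds
\[
\dim_A K^\om \le \frac{\log \la}{\log b}
\quad\text{and}\quad
\dim_L K^\om \ge \frac{\log \eta}{\log b},
\]
and both follow a common template: convert a pair of scales $0<r<R$ into integers $k$ and $n$ with $b^{-k}\asymp R$ and $b^{-(k+n)}\asymp r$, translate covering numbers for $K^\om\cap B(x,R)$ into counts of cylinder words in $\Sigma_{k+n}^\om(\sc)$, and then invoke the uniform bound (\ref{eq:bound-c}). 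This is the source of both $\eta$ and $\la$ in the statement.

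For the Assouad bound, I would fix $x\in K^\om$ and scales $0<r<R<b^{-2m}$, and pick integers $k\ge m$, $n\ge m$ such that $b^{-(k+1)}\le R<b^{-k}$ and $b^{-(k+n+1)}\le r<b^{-(k+n)}$. The ball $B(x,R)$ meets only a bounded number (independent of $x,R$) of basic intervals $I_{\sc}$ with $\sc\in\Sigma_{k}^\om$; say $\sc_1,\ldots,\sc_p$ with $p$ a constant. Then $K^\om\cap B(x,R)$ is covered by the family of basic intervals $\{I_{\mathbf d}:\mathbf d\in\bigcup_{j\le p}\Sigma_{k+n}^\om(\sc_j)\}$, each of length $b^{-(k+n)}\le r$. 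By (\ref{eq:bound-c}), this yields at most
\[
p\,C_2 b^{m-1}\la^{n-m+1}\le C'\la^n = C'\bigl(b^{n}\bigr)^{\log\la/\log b}\le C''\left(\frac{R}{r}\right)^{\log\la/\log b}
\]
covering sets, proving the Assouad upper bound. The small-scale condition $R<b^{-2m}$ is harmless since the Assouad dimension only depends on behaviour for $R$ below some $\rho>0$.

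For the lower-dimension bound, I would again fix $x\in K^\om$ and $0<r<R$ small, choose the same integers $k,n$, and this time find a word $\sc\in\Sigma_{k+m}^\om$ with $I_{\sc}\subset B(x,R)$ (possible for $R$ small enough, since $|I_{\sc}|=b^{-(k+m)}$ is at most a fixed constant times $R$ and we have enough consecutive prefixes in $\Sigma_{k+m}^\om$ filling a neighbourhood of $x$). Any cover of $K^\om\cap B(x,R)$ by sets of diameter at most $r$ must in particular cover the $|\Sigma_{k+n+m}^\om(\sc)|$ disjoint nonempty pieces $K^\om\cap I_{\mathbf d}$ with $\mathbf d\in\Sigma_{k+n+m}^\om(\sc)$, and each cover element of diameter $\le r<b^{-(k+n)}$ intersects a bounded number of these (at most $2$). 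By the lower half of (\ref{eq:bound-c}),
\[
N_r\bigl(B(x,R)\cap K^\om\bigr)\ge \tfrac12|\Sigma_{k+n+m}^\om(\sc)|\ge \tfrac12 C_2^{-1}\eta^{n+1}\ge c\left(\frac{R}{r}\right)^{\log\eta/\log b},
\]
which is exactly the inequality defining $\dim_L K^\om\ge\log\eta/\log b$.

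The main obstacle is the lower-dimension step: one must exhibit a full basic interval $I_{\sc}$ lying inside $B(x,R)$ for every centre $x\in K^\om$ and every small $R$, uniformly. This is where the argument really uses the Cantor-set structure of $K^\om$ and the fact (established via Lemma \ref{lem:key-estimate} and the two-sided bound (\ref{eq:bound-c})) that every cylinder in $K^\om$ branches with a positive density of descendants; once the correct depth $k+m$ is chosen so that two consecutive basic intervals of that depth fit inside $B(x,R)$, the bookkeeping above goes through.
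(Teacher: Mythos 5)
Your proof is essentially the same as the paper's: reduce via (\ref{eq:fractal-dimension}) to the two outer bounds, convert $(r,R)$ into depths, and bound $N_r(B(x,R)\cap K^\om)$ above and below via the two-sided estimate (\ref{eq:bound-c}) on $|\Sigma_{k+n}^\om(\sc)|$, exactly as the paper does with its depths $k_1+1$ and $k_2$. Two small slips in your lower-dimension step are worth flagging, though neither changes the outcome. First, because you pass to depth $k+n+m$ while $r$ may be as large as nearly $b^{-(k+n)}$, a single cover element of diameter $\le r$ can meet on the order of $b^m$ of the level-$(k+n+m)$ intervals, not $2$; either correct the constant to $O(b^m)$ or mimic the paper and count cylinders at depth $k+n$ (which genuinely gives the factor $2$). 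Second, applying (\ref{eq:bound-c}) with base depth $k+m$ yields $|\Sigma_{k+n+m}^\om(\sc)|\ge C_2^{-1}\eta^{n-m+1}$, not $\eta^{n+1}$, but this too only shifts the constant. Finally, the ``obstacle'' you worry about at the end is a non-issue: since $x\in K^\om$, the level-$(k+m)$ survivor cylinder $I_\sc$ containing $x$ has length $b^{-(k+m)}\le b^{-(k+1)}\le R$, hence lies entirely in $B(x,R)$ automatically — no branching or density argument is required, and the paper simply takes the cylinder of $x$ at depth $k_1+1$ for the same reason.
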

\begin{proof}
Write $s_1=\log\eta/\log b$ and $s_2=\log \la / \log b$. By (\ref{eq:fractal-dimension}) we only need to prove $\dim_L K^\om\ge s_1$ and $\dim_A K^\om\le s_2$. By the definitions of $\dim_L$ and $\dim_A$
it suffices to prove that there exists a constant $c>1$ such that
\begin{equation*}\label{eq:lower-and-Assouad}
  c^{-1} \left( \frac{R}{r} \right)^{s_1}\leq N_r(B(x, R) \cap K^\om)\leq c \left( \frac{R}{r} \right)^{s_2} \quad \forall 0<r<R<b^{-m}, \ \forall x\in K^\om.
 \end{equation*}

Take $0 < r < R<b^{-m}$ and $x \in K^\om$.
Without loss of generality, we can assume $R/r \ge b^{m+1}$.
Then there exist $k_1, k_2 \in \N_{\geq m}$ with $k_2 \ge k_1 +m+1$ such that
\begin{equation}\label{eq:r-R}
  \frac{1}{{b}^{k_1+1}}\le R < \frac{1}{{b}^{k_1}} \quad \text{and} \quad  \frac{1}{{b}^{k_2+1}}\le r < \frac{1}{{b}^{k_2}}.
\end{equation}

Since $R \ge b^{-(k_1+1)}$, we can find $\sc \in \Sigma_{k_1+1}^\om$ such that $ I_\sc \cap K^\om \subset B(x, R)  \cap K^\om$.
Note that
$$\Sigma_{k_2}^\om(\sc) \subset \big\{\sd \in D_b^{k_2}:  I_\sd \cap (I_\sc \cap K^\om) \ne \emptyset \big\}.$$
By (\ref{eq:bound-c}), we obtain
 \[ N_r(B(x, R) \cap K^\om)  \ge N_{b^{-k_2}}(I_\sc \cap K^\om) \ge \frac{1}{2}|\Sigma_{k_2}^\om(\sc)| \ge \frac{1}{2C_2} \eta^{k_2-k_1 -m} .\]
 Note by (\ref{eq:r-R}) that $(R/r)^{s_1} \le (b^{k_2-k_1+1})^{s_1}= \eta^{k_2-k_1+1}$.
 Therefore, we conclude that
  \[ N_r(B(x, R) \cap K^\om) \ge  \frac{1}{2C_2 \eta^{m+1}} \left( \frac{R}{r} \right)^{s_1}. \]

  Since $R< b^{-k_1}$, the ball $B(x,R)$ intersects at most three intervals in $\{I_\sd : \sd\in \Sigma_{k_1}^\om\}$, denoted by $I_{\sc_1},I_{\sc_2},I_{\sc_3}$ with $\sc_i \in \Sigma_{k_1}^\om$ for $1\le i\le 3$.
  That is, $B(x,R) \cap K^\om \subset I_{\sc_1} \cup I_{\sc_2} \cup I_{\sc_3}$.
  Then the family of intervals $\{I_\sd : \sd \in \Sigma_{k_2+1}^\om(\sc_1)\cup \Sigma_{k_2+1}^\om(\sc_2)\cup\Sigma_{k_2+1}^\om(\sc_3)\}$ forms a $b^{-k_2-1}$-covering of $B(x,R) \cap K^\om$.
   Thus, by (\ref{eq:bound-c}) we obtain
   \begin{align*}
     N_r(B(x, R) \cap K^\om) & \le N_{b^{-k_2-1}}(B(x, R) \cap K^\om)\\
    & \le |\Sigma_{k_2+1}^\om(\sc_1)|+| \Sigma_{k_2+1}^\om(\sc_2)|+|\Sigma_{k_2+1}^\om(\sc_3)| \\
    & \le 3 C_2 b^{m-1} \la^{k_2-k_1-m+2}  .
   \end{align*}
   Note by (\ref{eq:r-R}) that $(R/r)^{s_2} \ge (b^{k_2-k_1-1})^{s_2}= \lambda^{k_2-k_1-1}$.
   It follows that
   \[ N_r(B(x, R) \cap K^\om)  \le  3 C_2 b^{m-1} \lambda^{3-m} \left( \frac{R}{r} \right)^{s_2},  \]
 completing the proof.
\end{proof}

In the following we fix two sequences $\{p_n\}_{n=1}^\f$ and $\{q_n\}_{n=1}^\f$ of positive integers. Set $p_0=q_0=0$ and
\[ \ell_n:=\sum_{i=0}^{n}(p_i+q_i) + mn \quad\text{for}~n \in \N_0. \]
Let $L\big(\{p_n\},\{q_n\} \big)$ be the set of sequences $\om \in (D_b^m)^{\N_0}$ that is progressively overlapping at position-$k$ for all $\ell_n < k \le \ell_n +p_{n+1}$ and is totally distinct at position-$k$ for all $\ell_n + p_{n+1} < k \le \ell_n +p_{n+1}+q_{n+1}$.
For $\om \in (D_b^m)^{\N_0}$ and $k\in \mathbb{N}_{0}$ write \[
\mathbf{N}^\om_{k}:=(|\Sigma^\om_{k+m}|, |\Sigma^\om_{k+m-1}|,\ldots, |\Sigma^\om_{k+1}|).
\]
Recall the set $V$ in (\ref{eq:V}) and the matrices $A$ and $B$ in (\ref{eq:A-B}).

\begin{lemma}\label{lem:control}
 There exists a constant $C_1>1$ such that for any $\om \in L\big(\{p_n\},\{q_n\} \big)$, we have
  \begin{equation}\label{eq:control-1}
     C_1^{-2n-1} \eta^{\sum_{i=0}^{n} q_i}\lambda^{k-\ell_n+\sum_{i=0}^{n} p_i}  \le \frac{\|\mathbf{N}^\om_{k}\|}{\|\mathbf{N}^\om_{0}\|} \le C_1^{2n+1} b^{nm} \eta^{\sum_{i=0}^{n} q_i}\lambda^{k-\ell_n+\sum_{i=0}^{n} p_i}
  \end{equation}for $\ell_n < k \le \ell_n + p_{n+1}$;
  and
  \begin{equation}\label{eq:control-2}
  \begin{split}
   C_1^{-2n-2} \eta^{k- (\ell_n +p_{n+1})+\sum_{i=0}^{n} q_i}\lambda^{\sum_{i=0}^{n+1} p_i}\le\frac{ \|\mathbf{N}^\om_{k}\|}{\|\mathbf{N}^\om_{0}\|} & \le C_1^{2n+2} b^{nm} \eta^{k- (\ell_n +p_{n+1})+\sum_{i=0}^{n} q_i}\lambda^{\sum_{i=0}^{n+1} p_i}
  \end{split}
  \end{equation}for $\ell_n +p_{n+1} < k \le \ell_n +p_{n+1} + q_{n+1}$.
\end{lemma}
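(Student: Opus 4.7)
The plan is to recast the lemma as a matrix-product estimate and then read off the growth from the spectral bounds in Lemma~\ref{lem:bound-A-B}.

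First I would establish the following transition rule, which is a direct consequence of Lemma~\ref{lem:bound-cardinality}:
\[ \mathbf{N}^\om_{k+1} = \mathbf{N}^\om_k \cdot A \text{ when } \om \text{ is progressively overlapping at position-}(k+1), \]
\[ \mathbf{N}^\om_{k+1} = \mathbf{N}^\om_k \cdot B \text{ when } \om \text{ is totally distinct at position-}(k+1), \]
where $A,B$ are the matrices in (\ref{eq:A-B}). Indeed, the leading entry of $\mathbf{N}^\om_{k+1}$ is $|\Sigma^\om_{k+m+1}|$, which equals $(b-1)\|\mathbf{N}^\om_k\|$ in the first case and $b\,|\Sigma^\om_{k+m}| - |\Sigma^\om_{k+1}|$ in the second; the remaining entries of $\mathbf{N}^\om_{k+1}$ are just shifted coordinates of $\mathbf{N}^\om_k$, matching the other columns of $A$ and of $B$. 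Next, Lemma~\ref{lem:b-1<b} gives $(b-1)|\Sigma^\om_{k+i}| \le |\Sigma^\om_{k+i+1}| \le b|\Sigma^\om_{k+i}|$, so the normalised vector $\mathbf{N}^\om_k/\|\mathbf{N}^\om_k\|$ always lies in the set $V$ of (\ref{eq:V}), no matter the $k$. Consequently Lemma~\ref{lem:bound-A-B} applies with a single uniform constant $C_1>1$ every time we feed it a base vector $\mathbf{N}^\om_j/\|\mathbf{N}^\om_j\|$ and iterate by $A$ or $B$.

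With these two ingredients in hand I would prove (\ref{eq:control-1}) and (\ref{eq:control-2}) simultaneously by induction on $n$. The base case $n=0$ reduces to $\mathbf{N}^\om_k = \mathbf{N}^\om_0 A^k$ on $0<k\le p_1$ and $\mathbf{N}^\om_k = \mathbf{N}^\om_{p_1} B^{k-p_1}$ on $p_1<k\le p_1+q_1$, each handled by one application of Lemma~\ref{lem:bound-A-B}, absorbing one factor $C_1^{\pm 1}$ per matrix block. For the inductive step from $n$ to $n+1$ I would chain three estimates: (a) the inductive bound evaluated at $k=\ell_n+p_{n+1}+q_{n+1}$; (b) the $m$ unstructured positions between $\ell_n+p_{n+1}+q_{n+1}$ and $\ell_{n+1}$, where iterating Lemma~\ref{lem:b-1<b} gives
\[ (b-1)^m \le \frac{\|\mathbf{N}^\om_{\ell_{n+1}}\|}{\|\mathbf{N}^\om_{\ell_n+p_{n+1}+q_{n+1}}\|} \le b^m, \]
so since $b\ge 3$ the gap contributes a multiplicative factor at least $1$ on the lower side and at most $b^m$ on the upper side, accumulating to the $b^{nm}$ in the claim after $n$ gaps; (c) the matrix iterations $A^{k-\ell_{n+1}}$ on the next progressively overlapping block and $B^{k-\ell_{n+1}-p_{n+2}}$ on the next totally distinct block, each contributing one $C_1^{\pm 1}$ via Lemma~\ref{lem:bound-A-B}. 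A quick tally then matches the exponents $-2(n+1)-1,\;2(n+1)+1$ in (\ref{eq:control-1}) and $-2(n+1)-2,\;2(n+1)+2$ in (\ref{eq:control-2}).

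The main obstacle will really only be the bookkeeping of constants: one must verify that the gap lower factor $(b-1)^m$ is at least $1$ (so that no further $C_1^{-1}$ is lost) and that after each block boundary the base vector still sits in $V$ so that Lemma~\ref{lem:bound-A-B} may be reapplied. Both are immediate, the first from $b\ge 3$ and the second from Lemma~\ref{lem:b-1<b}. Beyond the matrix recurrence and the spectral estimate, no new idea should be needed.
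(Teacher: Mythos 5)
Your proposal is correct and follows essentially the same route as the paper: the same matrix transition rule from Lemma~\ref{lem:bound-cardinality}, the same observation that $\mathbf{N}^\om_k/\|\mathbf{N}^\om_k\|\in V$ uniformly so Lemma~\ref{lem:bound-A-B} applies with a fixed $C_1$ at each block, and the same $b^m$-bound for the $m$ unconstrained positions between $\ell_n+p_{n+1}+q_{n+1}$ and $\ell_{n+1}$. The only cosmetic difference is that you cast the chaining as an explicit induction on $n$, whereas the paper first establishes the one-period jump estimate (its (\ref{eq:iteration-3})) and then multiplies these together, but the bookkeeping of the $C_1^{\pm 1}$ and $b^m$ factors is identical.
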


\begin{proof}
  Fix a sequence $\om \in L\big(\{p_n\},\{q_n\} \big)$. By Lemma \ref{lem:bound-cardinality}, for $k\in \mathbb{N}$ we have
  \begin{align*}
  \mathbf{N}^\om_{k} & = \mathbf{N}^\om_{k-1} A \quad\text{if $\om$ is progressively overlapping at position-$k$}, \\
  \mathbf{N}^\om_{k} & = \mathbf{N}^\om_{k-1} B \quad\text{if $\om$ is totally distinct at position-$k$}.
  \end{align*}
  It follows that
  \begin{align*}
    \mathbf{N}^\om_{k} & = \mathbf{N}^\om_{\ell_n} A^{k-\ell_n} \quad \text{for}~~ \ell_n < k \le \ell_n +p_{n+1}, \\
    \mathbf{N}^\om_{k} & = \mathbf{N}^\om_{\ell_n+p_{n+1}} B^{k-\ell_n-p_{n+1}} \quad \text{for}~~ \ell_n +p_{n+1} < k \le \ell_n +p_{n+1} + q_{n+1}.
  \end{align*}
  By Lemma \ref{lem:b-1<b}, we have $\mathbf{N}^\om_{k}/\|\mathbf{N}^\om_{k}\| \in V$ for all $k \in \N_0$.
  By Lemma \ref{lem:bound-A-B}, we conclude that
  \begin{equation}\label{eq:iteration-1}
    C_1^{-1} \lambda^{k-\ell_n} \|\mathbf{N}^\om_{\ell_n}\| \le \|\mathbf{N}^\om_{k}\| \le C_1 \lambda^{k-\ell_n} \|\mathbf{N}^\om_{\ell_n}\| \quad \text{for}~~ \ell_n < k \le \ell_n +p_{n+1};
  \end{equation}
  and for $\ell_n +p_{n+1} < k \le \ell_n +p_{n+1} + q_{n+1} = \ell_{n+1} -m$,
  \begin{equation}\label{eq:iteration-2}
    C_1^{-1} \eta^{k-\ell_n-p_{n+1}} \|\mathbf{N}^\om_{\ell_n+p_{n+1}} \| \le \|\mathbf{N}^\om_{k}\| \le C_1 \eta^{k-\ell_n-p_{n+1}} \|\mathbf{N}^\om_{\ell_n+p_{n+1}} \|.
  \end{equation}
  By Lemma \ref{lem:b-1<b}, we have $\|\mathbf{N}^\om_{\ell_{n+1}-m}\| \le \|\mathbf{N}^\om_{\ell_{n+1}}\| \le b^m \|\mathbf{N}^\om_{\ell_{n+1}-m}\|$.
  This together with (\ref{eq:iteration-1}) and (\ref{eq:iteration-2}) implies that
  \begin{equation}\label{eq:iteration-3}
    C_1^{-2}  \eta^{q_{n+1}} \la^{p_{n+1}} \|\mathbf{N}^\om_{\ell_n}\|\le \|\mathbf{N}^\om_{\ell_{n+1}}\| \le C_1^2 b^m  \eta^{q_{n+1}} \la^{p_{n+1}} \|\mathbf{N}^\om_{\ell_n}\|.
  \end{equation}
  Thus, we obtain (\ref{eq:control-1}) by (\ref{eq:iteration-1}) and (\ref{eq:iteration-3}).
  Then (\ref{eq:control-2}) follows directly from (\ref{eq:control-1}) and (\ref{eq:iteration-2}).
\end{proof}

\begin{proposition}\label{prop:intermediate}
Given $0\leq s\leq t\leq 1$, if there exist sequences $\set{p_n}_{n=1}^\f, \set{q_n}_{n=1}^\f\subset\N$ satisfying
\begin{equation}\label{eq:limit-s-t}
  \lim_{n \to \f} \frac{\sum_{i=1}^n q_i}{ \sum_{i=1}^n (p_i+q_i) + p_{n+1}}=s, \quad \lim_{n \to \f} \frac{\sum_{i=1}^n q_i}{ \sum_{i=1}^n (p_i+q_i)} =t,
\end{equation}
and
\begin{equation}\label{eq:limit-0}
  \lim_{n\to\infty}\frac{n}{\sum_{i=1}^{n}(p_i+q_i)}= 0,
\end{equation}
 then for any $\om \in L\big(\{p_n\},\{q_n\} \big)$, we have
\[ \dim_H K^\om = t \frac{\log \eta}{\log b} + (1-t) \frac{\log \la}{\log b}  \quad\textrm{and}\quad  \dim_P K^\om = s \frac{\log \eta}{\log b} + (1-s) \frac{\log \la}{\log b}.  \]
\end{proposition}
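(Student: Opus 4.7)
The plan is to apply Theorem~\ref{general-result}, which reduces the statement to computing the $\liminf$ and $\limsup$ of $R(k) := \tfrac{\log |\Sigma_k^\om|}{k \log b}$. Since Lemma~\ref{lem:b-1<b} gives $|\Sigma_k^\om| \le \|\mathbf{N}^\om_k\| \le b^{m+1}|\Sigma_k^\om|$, the bounds on $\|\mathbf{N}^\om_k\|$ provided by Lemma~\ref{lem:control} transfer directly to $|\Sigma_k^\om|$ at the cost of an $O(1/k)=o(1)$ perturbation of $R(k)$. Hypothesis (\ref{eq:limit-0}) yields $n \ll \sum_{i=1}^n(p_i+q_i) \le \ell_n \le k$, so upon taking logarithms the exponential-in-$n$ prefactors $C_1^{2n+2}b^{nm}\|\mathbf{N}^\om_0\|$ appearing in (\ref{eq:control-1}) and (\ref{eq:control-2}) contribute only $O(n/k)=o(1)$ to $R(k)$.

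Setting $\Delta := \log_b \la - \log_b \eta > 0$, the surviving content of Lemma~\ref{lem:control} is the following. For $\ell_n < k \le \ell_n + p_{n+1}$ (inside a progressively overlapping block) one has
\begin{equation*}
  R(k) = \log_b \la - \alpha_k \Delta + o(1), \qquad \alpha_k := \frac{\sum_{i=1}^n q_i}{k};
\end{equation*}
and for $\ell_n + p_{n+1} < k \le \ell_{n+1}-m$ (inside a totally distinct block) the same identity holds with $\alpha_k := (k - \ell_n - p_{n+1} + \sum_{i=1}^n q_i)/k$. In both ranges the complementary coefficient of $\log_b \la$ equals $1-\alpha_k - mn/k$, and $mn/k=o(1)$ by (\ref{eq:limit-0}). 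On the $m$ buffer positions $\ell_{n+1}-m < k \le \ell_{n+1}$, Lemma~\ref{lem:b-1<b} gives $|R(k)-R(\ell_{n+1}-m)|=O(m/k)=o(1)$. Now $\alpha_k$ is monotone within each block: on the PO block the numerator is constant while the denominator grows, so $\alpha_k$ decreases; on the TD block one writes $\alpha_k = 1 + a/k$ with $a = \sum_{i=1}^n q_i - \ell_n - p_{n+1} < 0$, so $\alpha_k$ increases. Since $\Delta>0$, the function $R(k)$ attains its maximum over the $n$-th cycle at $k_n := \ell_n + p_{n+1}$ and its minimum at $\widetilde{k}_n := \ell_{n+1}-m$, both up to $o(1)$.

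It remains to evaluate $\alpha$ at these two distinguished positions. By construction $k_n = \sum_{i=1}^n(p_i+q_i) + p_{n+1} + mn$ and $\widetilde{k}_n = \sum_{i=1}^{n+1}(p_i+q_i) + mn$; hypothesis (\ref{eq:limit-0}) absorbs the $mn$ terms, and (\ref{eq:limit-s-t}) then gives $\alpha_{k_n} \to s$ and $\alpha_{\widetilde{k}_n} \to t$. Therefore
\[
  \limsup_{k\to\f} R(k) = \log_b \la - s\Delta = s\log_b \eta + (1-s)\log_b \la, \qquad \liminf_{k\to\f} R(k) = \log_b \la - t\Delta = t\log_b \eta + (1-t)\log_b \la,
\]
which by Theorem~\ref{general-result} yields the claimed formulae for $\dim_H K^\om$ and $\dim_P K^\om$. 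The main technical hurdle is the bookkeeping needed to show that the exponential-in-$n$ prefactors from Lemma~\ref{lem:control} are absorbed uniformly into the $o(1)$ term along the entire $n$-th cycle; this is precisely what condition (\ref{eq:limit-0}) is designed to guarantee.
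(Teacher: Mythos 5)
Your proof is correct and follows essentially the same route as the paper: reduce via Theorem~\ref{general-result} to the $\liminf/\limsup$ of $\log\|\mathbf{N}^\om_k\|/(k\log b)$, apply Lemma~\ref{lem:control}, absorb the $O(n)$-in-logarithm correction terms via~(\ref{eq:limit-0}), and locate the extremal positions at $\ell_n+p_{n+1}$ and $\ell_{n+1}-m$. Your reformulation in terms of the ratio $\alpha_k$ and its piecewise monotonicity is a clean repackaging of exactly what the paper's Lemma~\ref{lem:inequality} provides, but the substance is identical.
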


To prove Proposition \ref{prop:intermediate}, we need the following inequalities, which can be checked directly by using $\la > \eta>1$.
\begin{lemma}\label{lem:inequality}
  For any $n_1,n_2,n \in \N$ , we have
  \[ \frac{(n_1+n) \log \eta + n_2 \log \la}{n_1+n_2+n} \le \frac{n_1 \log \eta + n_2 \log \la}{n_1+n_2} \le \frac{n_1 \log \eta + (n_2+n) \log \la}{n_1+n_2+n}.  \]
\end{lemma}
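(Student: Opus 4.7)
The plan is to reduce both inequalities to elementary polynomial comparisons. Set $\alpha := \log\eta$ and $\beta := \log\lambda$; since $\lambda > \eta > 1$ by Lemma \ref{lem:pisot-number}, we have $\beta > \alpha > 0$. The middle expression $\frac{n_1\alpha + n_2\beta}{n_1+n_2}$ is simply a weighted average of $\alpha$ and $\beta$, so the conceptual content of the claim is that replacing some weight by $\alpha$ (the smaller value) shifts the average down, while replacing weight by $\beta$ (the larger value) shifts it up. Both inequalities are therefore instances of this monotonicity of weighted averages.

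To execute this, I would handle the left inequality first by clearing denominators (both denominators are positive integers). After cross-multiplying, the inequality
\[
  \frac{(n_1+n)\alpha + n_2\beta}{n_1+n_2+n} \le \frac{n_1\alpha + n_2\beta}{n_1+n_2}
\]
becomes
\[
  (n_1+n_2)\bigl[(n_1+n)\alpha + n_2\beta\bigr] \le (n_1+n_2+n)\bigl[n_1\alpha + n_2\beta\bigr].
\]
Expanding both sides, the $\alpha$-coefficients differ by
$(n_1+n_2+n)n_1 - (n_1+n_2)(n_1+n) = -n_2 n$,
and the $\beta$-coefficients differ by
$(n_1+n_2+n)n_2 - (n_1+n_2)n_2 = n_2 n$.
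Hence the inequality reduces to $n_2 n\,(\beta - \alpha) \ge 0$, which holds because $\beta > \alpha$ and $n_2, n \in \N$.

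The right inequality is treated symmetrically: cross-multiplying
\[
  \frac{n_1\alpha + n_2\beta}{n_1+n_2} \le \frac{n_1\alpha + (n_2+n)\beta}{n_1+n_2+n}
\]
and expanding yields, by the same kind of bookkeeping, the condition $n_1 n\,(\beta - \alpha) \ge 0$, again true. There is no real obstacle here; the only thing worth flagging is the need to verify $\eta > 1$ so that $\alpha > 0$ (though in fact only $\beta > \alpha$ is used in the reductions), and this was established in Lemma \ref{lem:pisot-number}. The whole proof should fit in a few lines of algebra.
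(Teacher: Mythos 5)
Your proof is correct and is exactly the direct verification the paper intends: the paper offers no written argument beyond remarking that the inequalities ``can be checked directly by using $\la>\eta>1$'', and your cross-multiplication reducing the two sides to $n_2 n(\log\la-\log\eta)\ge 0$ and $n_1 n(\log\la-\log\eta)\ge 0$ is that check, carried out correctly. Your observation that only $\la>\eta$ (not $\eta>1$) is actually needed is accurate.
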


\begin{proof}[Proof of Proposition \ref{prop:intermediate}]
  By Lemma \ref{lem:b-1<b}, we have $|\Sigma_k^\om| \sum_{j=1}^{m} (b-1)^j \le \|\mathbf{N}^\om_{k}\| \le |\Sigma_k^\om| \sum_{j=1}^{m} b^j$.
  It follows from Theorem \ref{general-result} that
  \[ \dim_H K^\om = \liminf_{k \to \f} \frac{\log \|\mathbf{N}^\om_{k}\|}{k \log b} \quad \text{and} \quad \dim_P K^\om = \limsup_{k \to \f} \frac{\log \|\mathbf{N}^\om_{k}\|}{k \log b}.\]
  Write $\widetilde{\ell}_n := \sum_{i=1}^{n}(p_i+q_i)$. Note that $\ell_n=\sum_{i=1}^{n}(p_i+q_i)+mn.$ Then (\ref{eq:limit-0}) implies that $\widetilde{\ell}_n / \ell_n \to 1$ as $n \to \f$.

  In the following we always assume that $n \in \N$. By choosing an appropriate constant $C>1$, by Lemma \ref{lem:control} we have for $\ell_n < k \le \ell_n + p_{n+1}$,
  \begin{equation}\label{eq:april-27-1}
   C^{-n} \eta^{\sum_{i=0}^{n} q_i}\lambda^{k-\ell_n+\sum_{i=0}^{n} p_i} \le \|\mathbf{N}^\om_{k}\| \le C^n \eta^{\sum_{i=0}^{n} q_i}\lambda^{k-\ell_n+\sum_{i=0}^{n} p_i}; \end{equation}
  and for $\ell_n +p_{n+1} < k \le \ell_n +p_{n+1} + q_{n+1}=\ell_{n+1}-m$,
  \begin{equation}\label{eq:april-27-2}
   C^{-n} \eta^{k- (\ell_n +p_{n+1})+\sum_{i=0}^{n} q_i}\lambda^{\sum_{i=0}^{n+1} p_i} \le \|\mathbf{N}^\om_{k}\| \le C^{n} \eta^{k- (\ell_n +p_{n+1})+\sum_{i=0}^{n} q_i}\lambda^{\sum_{i=0}^{n+1} p_i}. \end{equation}
  By (\ref{eq:limit-s-t}) and (\ref{eq:limit-0}), one can easily verify that
  \[ \dim_H K^\om \le \lim_{n \to \f} \frac{\log \|\mathbf{N}^\om_{\ell_{n+1}-m}\|}{(\ell_{n+1}-m) \log b} = t \frac{\log \eta}{\log b} + (1-t) \frac{\log \la}{\log b}=:\alpha, \]
  and
  \[ \dim_P K^\om \ge \lim_{n \to \f} \frac{\log \|\mathbf{N}^\om_{\ell_n+p_{n+1}}\|}{(\ell_n+p_{n+1}) \log b} = s\frac{\log \eta}{\log b} + (1-s) \frac{\log \la}{\log b}=:\beta. \]
  It remains to show that $\alpha \le \dim_H K^\om \le \dim_P K^\om \le \beta$.

  For $\ell_n < k \le \ell_n + p_{n+1}$, by Lemma \ref{lem:inequality} and (\ref{eq:april-27-1}) we have
  \begin{align*}
    \frac{\log \|\mathbf{N}^\om_{k}\|}{k \log b} & \ge \frac{(\sum_{i=0}^{n}q_i)\log \eta + (k-\ell_n+\sum_{i=0}^{n} p_i) \log \la}{k \log b} - \frac{n\log C}{k \log b} \\
    & = \frac{(\sum_{i=0}^{n}q_i)\log \eta + (k-\ell_n+mn+\sum_{i=0}^{n} p_i) \log \la}{k \log b} - \frac{n\log C+mn\log \la}{k \log b} \\
    & \ge \frac{(\sum_{i=0}^{n}q_i)\log \eta + (\sum_{i=0}^{n} p_i) \log \la}{\widetilde{\ell}_n \log b} - \frac{n\log C+mn\log \la}{\widetilde{\ell}_n\log b}  \rightarrow \alpha \quad\textrm{as}~n\to\f;
  \end{align*}
  and
  \begin{align*}
    \frac{\log \|\mathbf{N}^\om_{k}\|}{k \log b} & \le \frac{(\sum_{i=0}^{n}q_i)\log \eta + (k-\ell_n+\sum_{i=0}^{n} p_i) \log \la}{k \log b} + \frac{n\log C}{k \log b} \\
    & \le \frac{(\sum_{i=0}^{n}q_i +mn)\log \eta + (k-\ell_n+\sum_{i=0}^{n} p_i) \log \la}{k \log b} + \frac{n\log C}{k \log b} \\
    & \le \frac{(\sum_{i=0}^{n}q_i)\log \eta + (\sum_{i=0}^{n+1} p_i) \log \la}{(\widetilde{\ell}_n+p_{n+1}) \log b} + \frac{n\log C}{\widetilde{\ell}_n \log b} \rightarrow \beta\quad\textrm{as}~n\to\f.
  \end{align*}

  For $\ell_n +p_{n+1} < k \le \ell_n +p_{n+1} + q_{n+1}$, again by Lemma \ref{lem:inequality} and (\ref{eq:april-27-2}) we have
  \begin{align*}
    \frac{\log \|\mathbf{N}^\om_{k}\|}{k \log b} & \ge \frac{\big(k-(\ell_n+p_{n+1})+\sum_{i=0}^{n}q_i \big)\log \eta + (\sum_{i=0}^{n+1} p_i) \log \la}{k \log b} -\frac{n\log C}{k \log b} \\
    & = \frac{\big(k-(\ell_n+p_{n+1})+\sum_{i=0}^{n}q_i \big)\log \eta + (mn+\sum_{i=0}^{n+1} p_i) \log \la}{k \log b} - \frac{n\log C+mn\log \la}{k \log b} \\
    & \ge \frac{(\sum_{i=0}^{n+1}q_i)\log \eta + (\sum_{i=0}^{n+1} p_i) \log \la}{\widetilde{\ell}_{n+1} \log b} -\frac{n\log C+mn\log \la}{\widetilde{\ell}_n\log b}  \;\rightarrow \alpha\quad\textrm{as}~n\to\f;
  \end{align*}
  and
  \begin{align*}
    \frac{\log \|\mathbf{N}^\om_{k}\|}{k \log b} & \le \frac{\big(k-(\ell_n+p_{n+1})+\sum_{i=0}^{n}q_i \big)\log \eta + (\sum_{i=0}^{n+1} p_i) \log \la}{k \log b} +\frac{n\log C}{k \log b} \\
    & \le \frac{\big(k-(\ell_n+p_{n+1})+mn+\sum_{i=0}^{n}q_i \big)\log \eta + (\sum_{i=0}^{n+1} p_i) \log \la}{k \log b} + \frac{n\log C}{k \log b} \\
    & \le \frac{(\sum_{i=0}^{n}q_i)\log \eta + (\sum_{i=0}^{n+1} p_i) \log \la}{(\widetilde{\ell}_{n} +p_{n+1}) \log b} +\frac{n\log C}{\widetilde{\ell}_n\log b}  \rightarrow \beta \quad \textrm{as}~n\to\f.
  \end{align*}

  For $\ell_{n+1}-m=\ell_n +p_{n+1} + q_{n+1}< k \le \ell_{n+1}$, by Lemma \ref{lem:b-1<b} we have $\|\mathbf{N}^\om_{\ell_{n+1} - m}\| \le \|\mathbf{N}^\om_{k}\| \le b^m\|\mathbf{N}^\om_{\ell_{n+1} - m}\|$, and hence, \[ \frac{\log \|\mathbf{N}^\om_{\ell_{n+1} - m}\|}{\ell_{n+1} \log b} \le \frac{\log \|\mathbf{N}^\om_{k}\|}{k \log b} \le \frac{\log \|\mathbf{N}^\om_{\ell_{n+1} - m}\| + m \log b}{(\ell_{n+1} - m) \log b},\] which yields $\lim_{n\to\f}\frac{\log\|\mathbf N^\om_k\|}{k\log b}=\alpha$.
  Therefore, we conclude that $\alpha \le \dim_H K^\om \le \dim_P K^\om \le \beta$, as desired.
\end{proof}

\begin{proposition}\label{prop:dim-L-A}
\begin{enumerate}[{\rm(i)}]
  \item If $\displaystyle\limsup_{n\to \f} p_n = \f$, then for any $\om \in L\big(\{p_n\},\{q_n\} \big)$, we have \[ \dim_A K^\om =\frac{\log \la}{\log b}.\]
 \item  If $\displaystyle \limsup_{n\to \f} q_n = \f$, then for any $\om \in L\big(\{p_n\},\{q_n\} \big)$, we have \[ \dim_L K^\om = \displaystyle\frac{\log \eta}{\log b}. \]
     \end{enumerate}
\end{proposition}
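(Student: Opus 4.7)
By Proposition \ref{prop:bound-dimension} we already have $\dim_A K^\om\le\log\la/\log b$ and $\dim_L K^\om\ge\log\eta/\log b$; the plan is therefore to establish the matching reverse inequalities by exhibiting, along suitable subsequences of scales, local cylinder counts that grow like $\la^{p_{n+1}}$ in case (i) and $\eta^{q_{n+1}}$ in case (ii). The essential tools are Lemma \ref{lem:key-estimate}, which passes from global cardinalities $|\Sigma_k^\om|$ to cardinalities of descendants inside a fixed cylinder, together with the one-step estimates (\ref{eq:iteration-1})--(\ref{eq:iteration-2}) from the proof of Lemma \ref{lem:control}, which control a single progressively overlapping or totally distinct window with constants that do not depend on $n$.

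For (i), fix $n\in\N$, a word $\sc\in\Sigma_{\ell_n}^\om$, and a point $x\in I_\sc\cap K^\om$. Set $R=b^{-\ell_n}$ and $r=b^{-(\ell_n+p_{n+1})}$, so that $R/r=b^{p_{n+1}}$. Because every position in the range $\ell_n<k\le\ell_n+p_{n+1}$ is progressively overlapping, (\ref{eq:iteration-1}) gives $\|\mathbf{N}_{\ell_n+p_{n+1}}^\om\|\ge C_1^{-1}\la^{p_{n+1}}\|\mathbf{N}_{\ell_n}^\om\|$. Combining this with Lemma \ref{lem:b-1<b} (to replace $\|\mathbf{N}_k^\om\|$ by $|\Sigma_k^\om|$ up to a factor $b^{m+1}$) and then with Lemma \ref{lem:key-estimate} yields a constant $C>0$, independent of $n$, with
\[|\Sigma_{\ell_n+p_{n+1}}^\om(\sc)|\ge C\la^{p_{n+1}}.\]
Each cylinder at level $\ell_n+p_{n+1}$ has length $r$ and $I_\sc\subset B(x,2R)$, so $N_r(B(x,2R)\cap K^\om)\ge C(R/r)^{\log\la/\log b}$. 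Along the subsequence of $n$'s furnished by $\limsup_n p_n=\f$, this forces $N_r/(R/r)^s\to\f$ for every $s<\log\la/\log b$, and hence $\dim_A K^\om\ge\log\la/\log b$.

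Case (ii) is dual. Pick $\sc\in\Sigma_{\ell_n+p_{n+1}}^\om$ and $x\in I_\sc\cap K^\om$, and set $R=b^{-(\ell_n+p_{n+1})}$, $r=b^{-(\ell_{n+1}-m)}$, so that $R/r=b^{q_{n+1}}$. The totally distinct estimate (\ref{eq:iteration-2}) together with Lemmas \ref{lem:b-1<b} and \ref{lem:key-estimate} produces a constant $C'>0$, independent of $n$, with $|\Sigma_{\ell_{n+1}-m}^\om(\sc')|\le C'\eta^{q_{n+1}}$ for every $\sc'\in\Sigma_{\ell_n+p_{n+1}}^\om$. Since $B(x,R)$ meets at most three cylinders at level $\ell_n+p_{n+1}$, we obtain $N_r(B(x,R)\cap K^\om)\le 3C'(R/r)^{\log\eta/\log b}$; along the subsequence given by $\limsup_n q_n=\f$ this makes $N_r/(R/r)^s\to 0$ for every $s>\log\eta/\log b$, yielding $\dim_L K^\om\le\log\eta/\log b$.

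The only real subtlety is the bookkeeping around the $n$-dependent factors $C_1^n b^{nm}$ that appear in the fully iterated form of Lemma \ref{lem:control}; the point is to apply the one-window estimates (\ref{eq:iteration-1}) or (\ref{eq:iteration-2}) directly, rather than the cumulative bounds (\ref{eq:control-1})--(\ref{eq:control-2}), so that the controlling constant is the absolute constant $C_1$ and the resulting estimate survives multiplication by $\la^{p_{n+1}}$ or $\eta^{q_{n+1}}$ in the relevant direction.
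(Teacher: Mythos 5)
Your proposal is correct and follows essentially the same strategy as the paper's proof: both sides start from the a priori bounds of Proposition~\ref{prop:bound-dimension}, then work at the scale pair $R=b^{-\ell_n}$, $r=b^{-(\ell_n+p_{n+1})}$ (resp.\ $R=b^{-(\ell_n+p_{n+1})}$, $r=b^{-(\ell_n+p_{n+1}+q_{n+1})}$) and show that the local cylinder count inside $B(x,R)$ is pinned to $\la^{p_{n+1}}$ (resp.\ $\eta^{q_{n+1}}$) up to an $n$-independent constant, which along the subsequence where $p_n$ (resp.\ $q_n$) is unbounded defeats any Assouad upper bound (resp.\ lower-dimension lower bound) with exponent strictly below $\log\la/\log b$ (resp.\ strictly above $\log\eta/\log b$). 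The only cosmetic difference is the intermediate tool: the paper passes to the shifted sequence $\tau=\om^{\ell_n+m-1}\om^{\ell_n+m}\ldots$ and invokes inequality~(\ref{eq:nov28-1})/(\ref{eq:nov28-2}) together with Remark~\ref{rem:TD-PO}, whereas you combine the statement of Lemma~\ref{lem:key-estimate} with the single-window estimate~(\ref{eq:iteration-1})/(\ref{eq:iteration-2}) and the $\|\mathbf{N}_k^\om\|\asymp|\Sigma_k^\om|$ comparison from Lemma~\ref{lem:b-1<b}; both routes produce the same $n$-uniform constant, and your closing observation about why one must use the per-window bound rather than the cumulative~(\ref{eq:control-1})--(\ref{eq:control-2}) is exactly the right point.
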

\begin{proof}
Write $s_1=\log\eta/\log b$ and $s_2=\log \la / \log b$.
By Proposition \ref{prop:bound-dimension}, we have $s_1 \le \dim_L K^\om \le \dim_A K^{\om} \le s_2$.

For (i) suppose that $\displaystyle\limsup_{n\to \f} p_n = \f$ and $\om \in L\big(\{p_n\},\{q_n\} \big)$.
To show $\dim_A K^\om \ge s_2$, it suffices to prove that for any $\ep>0$ and any $c>0$, there are $x \in K^\om$ and $R>r>0$ such that \[ N_r(B(x, R) \cap K^\om)>  c \left( \frac{R}{r} \right)^{s_2-\ep}. \]
Fix $\ep>0$ and $c>0$. Then we can find a sufficiently large $p_{n+1}$ such that
\begin{equation}\label{eq:choose-p-n}
  b^{\ep p_{n+1}} > 2 c \la^{m-1} C_2,
\end{equation}
where $C_2$ is defined as in Proposition \ref{prop:TD-PO}.
Take $x \in K^\om$, and let $R=b^{-\ell_n}$ and $r=b^{-(\ell_n +p_{n+1})}$.
Then we can find $\sc \in \Sigma_{\ell_n}^\om$ such that $ I_\sc \cap K^\om \subset B(x, R)  \cap K^\om$.
Note that
$$\Sigma_{\ell_n+p_{n+1}}^\om(\sc) \subset \big\{\sd \in D_b^{\ell_n+p_{n+1}}:  I_\sd \cap (I_\sc \cap K^\om) \ne \emptyset \big\}.$$
It follows that
\[ N_r(B(x, R) \cap K^\om) \ge N_{b^{-(\ell_n + p_{n+1})}}(I_\sc \cap K^\om) \ge \frac{1}{2}|\Sigma_{\ell_n+p_{n+1}}^\om(\sc)|.\]
Let {$\tau=\tau^0\tau^1\ldots=\om^{\ell_n+m-1} \om^{\ell_n + m} \ldots \in (D_b^m)^{\N_0}$}.
Since $\om \in L\big(\{p_n\},\{q_n\} \big)$, the sequence $\tau$ is progressively overlapping at position-$k$ for all {$1\le k\le p_{n+1}-m+1$}.
By (\ref{eq:nov28-1}) in the proof of Lemma \ref{lem:key-estimate} and {Remark \ref{rem:TD-PO}}, we have
\[ |\Sigma_{\ell_n+p_{n+1}}^\om(\sc)| \ge |\Sigma_{p_{n+1}-m+1}^\tau| \ge C_2^{-1} \la^{p_{n+1}-m+1}. \]
Thus, by (\ref{eq:choose-p-n}) we obtain
\[ N_r(B(x, R) \cap K^\om)\ge \frac{1}{2\la^{m-1} C_2} \la^{p_{n+1}}= \frac{b^{\ep p_{n+1}}}{2\la^{m-1} C_2} \left( \frac{R}{r} \right)^{s_2-\ep} > c \left( \frac{R}{r} \right)^{s_2-\ep}. \]

Next we prove (ii). Suppose that $\displaystyle\limsup_{n\to \f} q_n = \f$ and $\om \in L\big(\{p_n\},\{q_n\} \big)$.
To show $\dim_L K^\om \le s_1$, it suffices to prove that for any $\ep>0$ and any $c>0$, there are $x \in K^\om$ and $R>r>0$ such that \[ N_r(B(x, R) \cap K^\om) < c \left( \frac{R}{r} \right)^{s_1+\ep}. \]
Fix $\ep>0$ and $c>0$. Then we can find a sufficiently large $q_{n+1}$ such that
\begin{equation}\label{eq:choose-q-n}
  c \eta^{m-1} b^{\ep q_{n+1}} > 3 b^{m-1} C_2.
\end{equation}
Take $x \in K^\om$, and let $R=b^{-k_1}$ and $r=b^{-k_2}$ with $k_1 = \ell_n + p_{n+1}$ and $k_2=k_1+ q_{n+1}$.
Then we can find $\sc_1,\sc_2,\sc_3 \in \Sigma_{k_1}^\om$ such that $B(x, R)  \cap K^\om \subset I_{\sc_1} \cup I_{\sc_2} \cup I_{\sc_3}$.
So the family of intervals $\{I_\sd : \sd \in \Sigma_{k_2}^\om(\sc_1)\cup \Sigma_{k_2}^\om(\sc_2)\cup\Sigma_{k_2}^\om(\sc_3)\}$ forms a $r$-covering of $B(x,R) \cap K^\om$.
It follows that \[ N_r(B(x, R) \cap K^\om) \le |\Sigma_{k_2}^\om(\sc_1)|+| \Sigma_{k_2}^\om(\sc_2)|+|\Sigma_{k_2}^\om(\sc_3)|. \]
Let {$\tau=\tau^0\tau^1\ldots=\om^{k_1+m-1} \om^{k_1 + m} \ldots \in (D_b^m)^{\N_0}$}.
Since $\om \in L\big(\{p_n\},\{q_n\} \big)$, the sequence $\tau$ is totally distinct at position-$k$ for all {$1\le k\le q_{n+1}-m+1$}.
By (\ref{eq:nov28-2}) in the proof of Lemma \ref{lem:key-estimate} and {Remark \ref{rem:TD-PO}}, we have
\[ |\Sigma_{k_2}^\om(\sc_i)| \le b^{m-1} |\Sigma_{q_{n+1}-m+1}^\tau| \le b^{m-1} C_2 \eta^{q_{n+1}-m+1}. \]
Thus, by (\ref{eq:choose-q-n}) we obtain
\[ N_r(B(x, R) \cap K^\om) \le \frac{3 b^{m-1} C_2}{\eta^{m-1}} \eta^{q_{n+1}}= \frac{3 b^{m-1} C_2}{\eta^{m-1} b^{\ep q_{n+1}}} \left( \frac{R}{r} \right)^{s_1+\ep} < c \left( \frac{R}{r} \right)^{s_1+\ep}. \]
\end{proof}

\begin{lemma}\label{lem:dimension-L}
  We have $\dim_H L\big(\{p_n\},\{q_n\} \big) \ge \frac{1}{m}$.
\end{lemma}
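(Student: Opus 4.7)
My plan is to build an explicit Cantor-type subset $\widetilde L \subset L(\{p_n\},\{q_n\})$ whose prefix-counting function $M_k := \#\{\om^0\ldots\om^{k-1} : \om \in \widetilde L\}$ satisfies $M_k \ge b^k$ for every $k \ge 0$, and then to apply the mass distribution principle in the ultrametric space $((D_b^m)^\N,\varrho)$.

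I will construct $\widetilde L$ inductively. Choose any $\om^0 \in D_b^m$; given $\om^0,\ldots,\om^{\ell_n}$, extend into block $n+1$ by three types of rules. At each of the $p_{n+1}$ progressively overlapping positions, I exploit the equivalence (direct from the definition) that progressive overlap at position-$k$ is equivalent to $\om^{k-m+1},\ldots,\om^k$ being the consecutive length-$m$ sliding windows of a single $(2m-1)$-letter word, so each new such position extends this underlying word by one letter and contributes $b$ choices. At each of the $q_{n+1}$ totally distinct positions, a union bound over the $m-1$ forbidden prefixes gives at least
\[ c_{m,b} := b^m - \sum_{j=1}^{m-1} b^j = \frac{b^{m+1} - 2b^m + b}{b-1} \]
admissible words $\om^k$, and the factorisation $c_{m,b} - b = \frac{(b-2)(b^m - b)}{b-1}$ yields $c_{m,b} \ge b$ for $b \ge 3$ and $m \ge 2$. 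At the $m$ free positions I take the first free word arbitrarily ($b^m$ choices) but constrain the last $m-1$ free words to be the sliding windows of a single $(2m-2)$-letter word ($b^{2m-2}$ choices). This last restriction pre-installs the sliding-window consistency required by the progressive overlap of block $n+2$, so $\widetilde L \subset L(\{p_n\},\{q_n\})$.

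Aggregating these per-position counts and using $c_{m,b} \ge b$, a short case analysis by which type the index $k-1$ is (initial word, progressively overlapping, totally distinct, first free, or inside the last $m-1$ free positions) shows that the exponent of $b$ in $M_k$ exceeds $k$ by a nonnegative multiple of $m-1$, so $M_k \ge b^k$ uniformly in $k$. Let $\mu$ be the Bernoulli probability measure on $\widetilde L$ assigning mass $1/M_k$ to each depth-$k$ cylinder. Since $\varrho$ is an ultrametric, any $r \in (0,1]$ satisfies $b^{-m(k+1)} \le r < b^{-mk}$ for a unique $k \ge 0$, and then $B(x,r)$ is contained in a depth-$(k+1)$ cylinder, so
\[ \mu(B(x,r)) \le \frac{1}{M_{k+1}} \le b^{-(k+1)} \le r^{1/m}. \]
The mass distribution principle now gives $\dim_H \widetilde L \ge 1/m$, whence $\dim_H L(\{p_n\},\{q_n\}) \ge 1/m$.

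The main obstacle I expect is the cross-block coupling at the boundary: progressive overlap at the first position of a new block is not a condition on that word alone but forces a sliding-window consistency on the last $m-1$ free words of the previous block. Treating those $m-1$ free words collectively as one $(2m-2)$-letter word with $b^{2m-2}$ choices (rather than $m-1$ independent words with $b^{m(m-1)}$ choices) is what simultaneously keeps $\widetilde L$ inside $L(\{p_n\},\{q_n\})$ and preserves the uniform bound $M_k \ge b^k$.
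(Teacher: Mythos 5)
Your approach is genuinely different from the paper's and, modulo a few imprecisions, it works. The paper constructs an explicit injection $\Upsilon\colon D_b^\N\to L(\{p_n\},\{q_n\})$ by taking the PO and free coordinates to be the length-$m$ sliding windows of a single sequence $\mathbf{d}\in D_b^\N$ and choosing the TD coordinates so that $\om^k_m=d_k$; one then checks $\varrho(\Upsilon(\mathbf{d}),\Upsilon(\mathbf{d}'))\ge\widetilde\varrho(\mathbf{d},\mathbf{d}')^m$ and concludes by Hölder dimension distortion. You instead build a Cantor-type subset $\widetilde L\subset L$ by tracking branching numbers position by position and then invoke the mass distribution principle. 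Both are ``embed a full-shift-like structure'' arguments; the paper's route is bookkeeping-free once the Hölder estimate is noted, whereas yours needs the uniform bound $M_k\ge b^k$ but makes the lower bound $1/m$ visibly a consequence of $\ge b$ choices per symbol under a metric contracting by $b^{-m}$ per symbol.

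Three points you should tighten. First, the asserted \emph{equivalence} between progressive overlap at position-$k$ and $\om^{k-m+1},\ldots,\om^k$ being sliding windows of a single $(2m-1)$-letter word is false: PO at position $k$ only forces the overlaps of $\om^k$ with each $\om^{k-j}$, not the pairwise overlaps among $\om^{k-1},\ldots,\om^{k-m+1}$ (for $m=3$, take $\om^{k-2}=012$, $\om^{k-1}=020$, $\om^k=201$; PO at $k$ holds but these are not sliding windows). Replace ``is equivalent to'' by ``is implied by''; since you only need a sufficient condition to force $\widetilde L\subset L$, the construction and the count of $b$ extensions per PO step are unaffected. Second, ``the Bernoulli probability measure assigning mass $1/M_k$ to each depth-$k$ cylinder'' is not well defined unless the branching number at each step is prefix-independent, which is not automatic at TD positions (the exact number of admissible $\om^k$ depends on $\om^{k-m+1},\ldots,\om^{k-1}$). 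Either further prune each TD step to exactly $b$ admissible words, making the branching uniform, or use the consistent measure $\mu\big([\om^0\ldots\om^k]\big)=\prod_{j=0}^k N_j^{-1}$, which still satisfies $\mu(C)\le b^{-(k+1)}$ for every depth-$(k+1)$ cylinder $C$ because every $N_j\ge b$. Third, the phrase ``exceeds $k$ by a nonnegative multiple of $m-1$'' is only literally true after the pruning just described; for the argument all you need is that each position contributes a factor $\ge b$, hence $M_k\ge b^k$, and that is correct.
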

\begin{proof}
  Fix any sequence $d_0 d_1 d_2 \ldots \in D_b^\N$, and we construct a sequence $\om=\om^0 \om^1 \ldots \in L\big(\{p_n\},\{q_n\} \big)$ as follows. We first set $d_{-(m-1)} = \cdots= d_{-2} = d_{-1} =0$.
  For $n\in\N_{ 0}$ and $0\le k \le p_1$ or $\ell_n + p_{n+1} + q_{n+1}< k \le \ell_{n+1} + p_{n+2}$, let $\om^k = d_{k-(m-1)} \ldots d_{k-1} d_k$. Then the sequence $\om$ is progressively overlapping at position-$k$ for all $\ell_n < k \le \ell_n + p_{n+1}$.
  For $\ell_n + p_{n+1} < k \le \ell_n + p_{n+1} + q_{n+1}$ we define $\om^k=\om_1^k\om_2^k\ldots\om_m^k$ recursively. Assume that $\om^i$  have been defined for all $0\le i < k$. Then we define $\om^k$ by letting $\om^k_m = d_k$ and choosing $\om^k_j \in D_b \setminus\{ \om^{k-m+j}_m\}$ for all $1 \le j \le m-1$. So the sequence $\om$ is totally distinct at position-$k$ for all $\ell_n + p_{n+1} < k \le \ell_n + p_{n+1} + q_{n+1}$.

  Thus, for any $\mathbf{d}=d_0 d_1 d_2 \ldots \in D_b^\N$, we can construct a sequence $\Upsilon(\mathbf{d})=\om^0 \om^1 \ldots \in L\big(\{p_n\},\{q_n\} \big)$ such that $\om^k_m = d_k$ for all $k \in \N_0$.
  We equip $D_b^\N$ with the metric
  \[ \widetilde{\varrho}(\mathbf{d},\mathbf{d}') = b^{-\inf\{k \in \N_0: d_k \ne d_k'\}} \quad\text{for }~\mathbf{d}=d_0 d_1 d_2 \ldots, ~\mathbf{d}'=d_0' d_1' d_2' \ldots \in D_b^\N. \]
  Then one can check that $\dim_H D_b^\N = 1$ under the metric $\widetilde{\varrho}$.
  Recall the metric $\varrho$ on $(D_b^m)^{\N_0}$ defined in (\ref{eq:metric-varrho}).
  We have \[ \varrho\Big( \Upsilon(\mathbf{d}), \Upsilon(\mathbf{d}')\Big) \ge \Big(\widetilde{\varrho}( \mathbf{d}, \mathbf{d}') \Big)^m \quad \forall \mathbf{d}, \mathbf{d}'\in D_b^\N.  \]
  Therefore, we conclude that $\dim_H L\big(\{p_n\},\{q_n\} \big) \ge \frac{1}{m}$.
\end{proof}

\begin{proof}[Proof of Theorem \ref{thm:intermediate value}]
(i) follows from Proposition \ref{prop:bound-dimension}. Next we prove (ii).
  For any $\frac{\log \eta}{\log b} \le \alpha \le \beta \le \frac{\log \la}{\log b}$ there exist $0\le s \le t \le 1$ such that
  \[\alpha=t \frac{\log \eta}{\log b} + (1-t) \frac{\log \la}{\log b} \quad\text{and}\quad \beta=s\frac{\log \eta}{\log b} + (1-s) \frac{\log \la}{\log b}. \]
  By Proposition \ref{prop:intermediate} and Lemma \ref{lem:dimension-L}, we only need to construct two sequences $\{p_n\}_{n=1}^\f$ and $\{q_n\}_{n=1}^\f$ satisfying (\ref{eq:limit-s-t}) and (\ref{eq:limit-0}).
  Let $\lceil x \rceil$ denote the smallest integer strictly larger than $x$.

  \textbf{Case I.} $0\le s < t \le 1$. Let $t_n = \min\{ t/(1-t), n \}$ and $s_n = \min\{t/s -1, n\}$ for $n \in \N$. In particular, for $t=1$ we have $t_n=n$; and for $s=0$ we have $s_n=n$. Choose arbitrarily $p_1 \in \N$, and let
  \[ q_n = \lceil t_n p_n \rceil\quad \text{and}\quad p_{n+1}= \Big\lceil s_n \sum_{i=1}^{n} (p_i+q_i)\Big\rceil \quad \forall n \in \N. \]
  Note first that $p_{n+1} \ge (t-s)n$ and $q_{n+1} \ge t(t-s)n$, which implies (\ref{eq:limit-0}).
  It is easy to check that
 \[ \lim_{n \to \f} \frac{\sum_{i=1}^{n} q_i}{\sum_{i=1}^{n} p_i} = \lim_{n \to \f}  t_n= \frac{t}{1-t} \quad\text{and}\quad \lim_{n \to \f} \frac{p_{n+1}}{\sum_{i=1}^{n} (p_i+q_i)} = \lim_{n \to \f}  s_n= \frac{t}{s}-1, \]
 which imply (\ref{eq:limit-s-t}).

  \textbf{Case II.} $0 \le s=t \le 1$. Let $p_{n} = \lceil (1-t)n \rceil$ and $q_n = \lceil tn \rceil$ for all $n \in \N$. One can easily verify (\ref{eq:limit-s-t}) and (\ref{eq:limit-0}).

Finally we prove (iii). In the case that $0< s < t < 1$, the sequences $\{p_n\}_{n=1}^\f$ and $\{q_n\}_{n=1}^\f$ constructed above satisfy $p_n \to \f$ and $q_n \to \f$. By Proposition \ref{prop:dim-L-A}, we have for any $\om \in L\big(\{p_n\},\{q_n\} \big)$,
  \[ \frac{\log \eta}{\log b} = \dim_L K^\om < \dim_H K^\om < \dim_P K^{\om} < \dim_A K^\om = \frac{\log \la}{\log b}. \]
  The proof is complete.
\end{proof}

\section{Regularity of $K^\omega$}\label{sec:regularity}

In this section, we investigate some special sequences $\om \in (D_b^m)^{\N_0}$ for which $\dim_H K^\om = \dim_P K^\om$. 
First, we generalize progressively overlapping and totally distinct sequences. 
For $\om \in (D_b^m)^{\N_0}$, define
\begin{align*}
  \Lambda_{\mathrm{PO}}^\om &:= \big\{ k \in \N: \om\;\text{is progressively overlapping at position-$k$} \big\}, \\
  \Lambda_{\mathrm{TD}}^\om &:= \big\{ k \in \N: \om\;\text{is totally distinct at position-$k$} \big\}.
\end{align*}
Recall the matrices $A$ and $B$ in (\ref{eq:A-B}), and the numbers $\lambda$ and $\eta$ defined in Lemma \ref{lem:pisot-number}. 
We also use the same notation $\mathbf{N}^\om_{k}=(|\Sigma^\om_{k+m}|, |\Sigma^\om_{k+m-1}|,\ldots, |\Sigma^\om_{k+1}|)$ for $\om \in (D_b^m)^{\N_0}$ and $k\in \mathbb{N}_{0}$. 

\begin{proposition}
Let $\om \in (D_b^m)^{\N_0}$.

{\rm(i)} If 
\begin{equation}\label{eq:almost-PO}
  \lim_{n \to \f} \frac{\#\big( \Lambda_{\mathrm{PO}}^\om \cap (0,n] \big)}{n} =1,
\end{equation}
then we have \[ \dim_H K^\om = \dim_P K^{\om} = \dim_A K^{\om} = \frac{\log \lambda}{\log b}. \]

{\rm(ii)} If \[ \lim_{n \to \f} \frac{\#\big( \Lambda_{\mathrm{TD}}^\om \cap (0,n] \big)}{n} =1, \]
then we have \[ \dim_L K^{\om} = \dim_H K^\om = \dim_P K^{\om} = \frac{\log \eta}{\log b}. \]
\end{proposition}
Before proving the proposition we remark that in (i) we may not have $\dim_L K^\om=\frac{\log\la}{\log b}$. Note that one can easily construct a sequence $\om\in(D_b^m)^{\N_0}$ such that (\ref{eq:almost-PO}) holds and the length of consecutive integers in $\Lambda^\om_{TD}$ is arbitrarily long. Then by the same argument as in the proof of Proposition \ref{prop:dim-L-A} (ii) we have $\dim_L K^\om=\frac{\log\eta}{\log b}<\frac{\log\la}{\log b}$. Similarly, in (ii) the equality $\dim_A K^\om=\frac{\log \eta}{\log b}$ may not hold.

\begin{proof}
  (i) If $\N\setminus \Lambda_{\mathrm{PO}}^\om$ is a finite set, then the sequence $\om$ is eventually progressively overlapping. By Remark \ref{rem:eventually-PO}, the set $K^\om$ is $\frac{\log \lambda}{\log b}$-Ahlfors regular. 
  In the following, we assume that the set $\N\setminus \Lambda_{\mathrm{PO}}^\om$ is infinite. 
  Write $\N\setminus \Lambda_{\mathrm{PO}}^\om = \big\{n_1, n_2, n_3,\ldots \big\}$ where $n_1 < n_2 < n_3 < \cdots$. Let $n_0=0$. 
  Note that $\om$ is progressively overlapping at position-$k$ for any $n_j < k < n_{j+1}$.
It follows from Lemma \ref{lem:bound-cardinality} that \[ \mathbf{N}^\om_{k} = \mathbf{N}^\om_{n_j} A^{k-n_j} \quad \forall n_j < k < n_{j+1}. \]
For $n_j \le k < n_{j+1}$, by Lemma \ref{lem:bound-A-B} we have
\begin{align*}
  \|\mathbf{N}^\om_{k}\| & \ge C_1^{-1} \lambda^{k-n_j} \|\mathbf{N}^\om_{n_j}\| \ge C_1^{-1} \lambda^{k-n_j} \|\mathbf{N}^\om_{n_j-1}\| \\
  & \ge C_1^{-2} \lambda^{k-n_{j-1}-1} \|\mathbf{N}^\om_{n_{j-1}}\| \ge C_1^{-2} \lambda^{k-n_{j-1}-1} \|\mathbf{N}^\om_{n_{j-1}-1}\| \\
  & \ge \cdots\cdots \\
  & \ge C_1^{-j-1} \lambda^{k-j} \|\mathbf{N}^\om_{0}\|.
\end{align*}
That is, \[ \frac{ \|\mathbf{N}^\om_{k}\| }{ \|\mathbf{N}^\om_{0}\| } \ge \frac{ \lambda^{\#(\Lambda_{\mathrm{PO}}^\om \cap (0,k])} }{ C_1^{1+\#( (0,k] \setminus \Lambda_{\mathrm{PO}}^\om)} } \quad \forall k \in \N. \]
By Theorem \ref{general-result} and (\ref{eq:almost-PO}), it follows  that \[ \dim_H K^\om \ge \frac{\log \lambda}{\log b}. \]
By Proposition \ref{prop:bound-dimension}, we conclude that \[ \dim_H K^\om = \dim_P K^{\om} = \dim_A K^{\om} = \frac{\log \lambda}{\log b}. \]

(ii) The proof is almost identical to that of (i), so we omit it here. 
\end{proof}

Next, we shall prove Theorem \ref{thm:L-p-q}, which is based on the following lemma.

\begin{lemma}\label{lem:matrix-A-B}
  For $p,q \in \N$, the matrix $A^pB^q$ is a primitive nonnegative matrix. 
\end{lemma}
\begin{proof}
  In order to describe the matrix $B^{k}$ for $k \in \N$, we define the sequence $\{v_n\}$ as follows: 
\[
\begin{cases}
  v_{-2m} = \cdots = v_{-m-1} = 0,\; v_{-m} = -1, \; v_{-m+1} = \cdots = v_{-1} = 0, \\
  v_n = b v_{n-1} - v_{n-m} \;\text{for}\; n \in \N_{0}.
\end{cases}
\]
Then we have $v_n = b^n$ for $0\le n \le m-1$. One can check that 
\begin{enumerate}[{\rm(i)}]
  \item the sequence $\{v_n\}_{n=0}^\f$ is positive and is strictly increasing, and \[ v_n > \max\{v_{n-1}, v_{n-2}, \ldots, v_{n-m+1}\} \quad \forall n \ge 0, \]
\item the sequence $\{v_n\}_{n=-m+1}^\f$ is nonnegative, and \[ v_n = b v_{n-1} - v_{n-m}\ge \max\{v_{n-1}, v_{n-2}, \ldots, v_{n-m+1}\} \quad \forall n \ge -m+2. \]
\end{enumerate}

Write $B^{k}=(b_{i,j}^{(k)})_{1\le i,j \le m}$ for $k \in \N$.
We claim that for $k \in \N$,
\begin{equation}\label{eq:matrix-B-k}
 B^k =
   \left(
     \begin{array}{ccccc}
    v_k & v_{k-1}  & \cdots & v_{k-m+2} & v_{k-m+1} \\
    -v_{k-m+1} & -v_{k-m} & \cdots & -v_{k-2m+3} & -v_{k-2m+2} \\
    \vdots & \vdots & \vdots & \vdots & \vdots \\
    -v_{k-2} & -v_{k-3} &\cdots & -v_{k-m}  & -v_{k-m-1} \\
    -v_{k-1} &-v_{k-2} &\cdots &-v_{k-m+1} &-v_{k-m}
     \end{array}
   \right)_{m \times m}.
\end{equation}
That is, $b_{1,j}^{(k)} = v_{k-j+1}$ for $1\le j \le m$, and $b_{i,j}^{(k)} = - v_{k+i-j-m}$ for $1< i \le m, 1 \le j \le m$.

We will prove the claim (\ref{eq:matrix-B-k}) by induction on $k$. 
First, by (\ref{eq:A-B}) the claim (\ref{eq:matrix-B-k}) holds for $k=1$. 
Then  we assume that the claim (\ref{eq:matrix-B-k}) holds for some $k \in \N$.  
Note that $B^{k+1}= B \cdot B^k$. 
By (\ref{eq:A-B}) we have 
\begin{itemize}
  \item for $1 \le j \le m$, $b_{1,j}^{(k+1)} = b b_{1,j}^{(k)} + b_{2,j}^{(k)} = b v_{k+1-j} - v_{k+2-j-m} = v_{(k+1)-j+1}$, where the last equality follows from (ii);
  \item for $1< i < m$ and $1 \le j \le m$, $b_{i,j}^{(k+1)} = b_{i+1,j}^{(k)} = - v_{k+1+i-j-m}$;
  \item for $1 \le j \le m$, $b_{m,j}^{(k+1)} = - b_{1,j}^{(k)} = - v_{k+1-j}$.
\end{itemize}
This means that the claim (\ref{eq:matrix-B-k}) also holds for $k+1$. 
By induction on $k$, we prove the claim (\ref{eq:matrix-B-k}). 

For $k \in \N$, write $C_k = A B^k = (c_{i,j}^{(k)})_{1\le i,j \le m}$. 
Then by (\ref{eq:A-B}) we have 
\begin{equation}\label{eq:matrix-C-k}
\begin{cases}
  c_{i,j}^{(k)} 
  = (b-1) v_{k-j+1} - v_{k-j+1-(m-i)} \;\text{for}\; 1\le i < m, 1\le j \le m, \\
  c_{m,j}^{(k)}
  = (b-1) v_{k-j+1} \;\text{for}\; 1 \le j \le m.
\end{cases}
\end{equation}
By (ii) and (\ref{eq:matrix-C-k}), we first have that the matrix $AB^k$ is nonnegative for all $k \in \N$. 
Thus, the matrix $A^p B^q$ is nonnegative for all $p \in \N, q \in \N$.  
For $k \ge m-1$, by (i) and (\ref{eq:matrix-C-k}) we obtain that all entries of the matrix $AB^k$ are positive. 
Thus, the matrix $A^p B^q$ is positive for $p \in \N, q \in \N_{\ge m-1}$. 

Next, fix $1 \le k < m-1$. Note that $v_{-m}=-1$. 
By (i) and (\ref{eq:matrix-C-k}) we have \[ c_{i,j}^{(k)} > 0 \quad\text{for}\; j\le k+1 \;\text{or}\;  j=k+1+i. \]
Using the fact that $c_{i,1}^{(k)} >0$ for $1 \le i \le m$, $c_{1,j}^{(k)} >0$ for $1 \le j \le k+1$, and $c_{i, k+1+i}^{(k)}>0$ for $1 \le i \le m-1 -k$, one can check that the matrix $A B^k$ is primitive.
For $\ell \in \N$, write $A^{\ell} B^k = (u_{i,j}^{(\ell)})_{1\le i,j \le m}$. 
By induction on $\ell$, we can show that $u_{i,1}^{(\ell)} >0$ for $1 \le i \le m$, $u_{1,j}^{(\ell)} >0$ for $1 \le j \le \min\{k+\ell,m\}$, and $u_{i, k+\ell +i}^{(\ell)}>0$ for $1 \le i \le \max\{1,m-\ell-k\}$.
This implies that the matrix $A^\ell B^k$ is primitive for all $\ell \in \N$. 

Therefore,  for any $p,q \in \N$ the matrix $A^pB^q$ is a primitive nonnegative matrix.
\end{proof}

\begin{proof}[Proof of Theorem \ref{thm:L-p-q}]
  Fix $p,q \in \N$. 
  By Lemma \ref{lem:matrix-A-B}, the matrix $A^pB^q$ is a primitive nonnegative matrix. 
  Let $\lambda_{p,q}$ be the largest eigenvalue of the matrix $A^p B^q$. 
  For $\om \in L_{p,q}$, by Lemma \ref{lem:bound-cardinality} we have \[ \mathbf{N}^\om_{k(p+q)} = \mathbf{N}^\om_{0} (A^p B^q)^k \quad \forall k \in \N. \]
  As we have done in Lemma \ref{lem:bound-A-B}, there exists a constant $C>1$ such that \[ C^{-1} \lambda_{p,q}^k \le \|\mathbf{N}^\om_{k(p+q)}\| \le C \lambda_{p,q}^k\quad \forall k \in \N. \]
  By Proposition \ref{prop:Ahlfors-regular}, the set $K^{\om}$ is $\frac{\log \lambda_{p,q}}{(p+q)\log b}$-Ahlfors regular. 
  
  Next, we consider $\lambda_{np,nq}$ for $n \in \N$. Note that for $\om \in L_{np,nq}$, we have  $\mathbf{N}^\om_{kn(p+q)} = \mathbf{N}^\om_{0} (A^{np} B^{nq})^k$ for $k \in \N$.
By Lemma \ref{lem:bound-A-B}, we have \[ C_1^{-2k} \lambda^{knp} \eta^{knq}\le \frac{\| \mathbf{N}^\om_{kn(p+q)} \|}{ \|\mathbf{N}^\om_{0}\| } \le C_1^{2k} \lambda^{knp} \eta^{knq}. \]
On the other hand, \[ \lim_{k \to \f} \frac{\log \| \mathbf{N}^\om_{kn(p+q)} \| }{kn(p+q)} = \frac{\log \lambda_{np,nq}}{n(p+q)}. \]
Thus, we obtain \[ \frac{p\log \lambda}{p+q} + \frac{q\log \eta}{p+q} - \frac{2\log C_1}{n(p+q)} \le \frac{\log \lambda_{np,nq}}{n(p+q)} \le \frac{p\log \lambda}{p+q} + \frac{q\log \eta}{p+q} + \frac{2\log C_1}{n(p+q)}. \]
It follows that \[ \lim_{n \to \f} \frac{\log \lambda_{np,nq}}{n(p+q)} = \frac{p\log \lambda}{p+q} + \frac{q\log \eta}{p+q}. \]
This means that the set \[ \bigg\{ \frac{\log \lambda_{p,q}}{(p+q)\log b}: p,q \in \N \bigg\} \] is dense in $[\log_b \eta, \log_b \lambda]$. 
\end{proof}

At the end of this section, we give a new pattern which also results in Ahlfors regularity. 

\begin{example}
  Let $m\in \mathbb{N}_{\geq 3}$ and $\om=\om^0 \om^1 \om^2\ldots \in (D_b^m)^{\N_0}$, where $\om^k = \om^k_1 \om^k_2 \ldots \om^k_m \in D_b^m$. 
  Suppose that for $k\in\N_{\geq m-1}$, we have 
\begin{equation}\label{eq:mix}
\begin{cases}
  \omega^k_1 \ldots \omega^k_{m-j} \ne \omega^{k-j}_{j+1} \ldots \omega^{k-j}_m\quad \forall 1 \le j \le m-2, \\
  \omega^k_1=\omega^{k-m+1}_m.
\end{cases}
\end{equation}

Note first that 
\[ \Sigma_{k+1}^\om=\Big(\Sigma_{k}^\om\times \big(D_b\backslash \big\{ \omega_{m}^{k-m+1}\big\}\big)\Big)\cup \big\{ d_1\ldots d_{k}\omega_{m}^{k-m+1}\in \Sigma_{k+1}^\om:d_1\ldots d_{k}\in \Sigma_{k}^\om \big\},\] where the union is disjoint. 
So, we obtain 
\begin{equation}\label{eq:nov13-1}
\begin{split}
|\Sigma_{k+1}^\om|-(b-1)|\Sigma_{k}^\om|
& = \#\left\{ d_1\ldots d_{k}\in  \Sigma_{k}^\om:d_1\ldots d_{k}\omega_{m}^{k-m+1}\in \Sigma_{k+1}^\om\right\} \\
&= \#\left\{ d_1\ldots d_{k}\in  \Sigma_{k}^\om:d_1\ldots d_{k}\omega_{1}^{k}\in \Sigma_{k+1}^\om\right\},
\end{split}
\end{equation}
where the last equality follows from $\omega^k_1=\omega^{k-m+1}_m$. 
On the other hand, 
\[ \Sigma_{k+m-1}^\om \times D_b=\Sigma_{k+m}^\om\cup \big\{ d_1\ldots d_{k}\omega_{1}^{k}\ldots \omega_{m}^{k}:d_1\ldots d_{k}\omega_{1}^{k}\ldots \omega_{m-1}^{k}\in \Sigma_{k+m-1}^\om\big\},
\]
where the union is disjoint. Thus, we have 
\begin{equation}\label{eq:nov13-2}
 b|\Sigma_{k+m-1}^\om|-|\Sigma_{k+m}^\om| = \# \big\{ d_1\ldots d_{k}\omega_{1}^{k}\ldots \omega_{m}^{k}:d_1\ldots d_{k}\omega_{1}^{k}\ldots \omega_{m-1}^{k}\in \Sigma_{k+m-1}^\om\big\}. 
 \end{equation}
For any $d_1\ldots d_{k} \in \Sigma_k^\om$, by (\ref{eq:mix}) we have \[ d_1\ldots d_{k}\omega_{1}^{k}\in \Sigma_{k+1}^\om\;\;\text{if and only if}\;\; d_1\ldots d_{k} \omega_{1}^{k}\ldots \omega_{m-1}^{k}\in \Sigma_{k+m-1}^\om. \]
Therefore, by (\ref{eq:nov13-1}) and (\ref{eq:nov13-2}) we obtain $|\Sigma_{k+1}^\om|-(b-1)|\Sigma_{k}^\om| = b|\Sigma_{k+m-1}^\om|-|\Sigma_{k+m}^\om|$, i.e., \[ |\Sigma_{k+m}^\om|=b|\Sigma_{k+m-1}^\om|-|\Sigma_{k+1}^\om|+(b-1)|\Sigma_{k}^\om|\quad \forall k \ge m-1. \]

Let $\gamma \in (b-1,b)$ be the root of equation $x^m - b x^{m-1} + x -(b-1)=0$. 
As we have done in Lemmas \ref{lem:pisot-number}, \ref{lem:bound-A-B} and Proposition \ref{prop:TD-PO}, we can show that $\gamma$ is a Pisot number, and there exists a constant $C>1$ such that \[ C^{-1} \gamma^k \le |\Sigma_{k}^\om| \le C \gamma^k \quad \forall k \in \N. \]
By Proposition \ref{prop:Ahlfors-regular}, the set $K^{\om}$ is $\frac{\log \gamma}{\log b}$-Ahlfors regular. 
Moreover, we have $\eta< \gamma < \lambda$. 
\end{example}

 Theorem \ref{thm:L-p-q} shows that for a countable and dense set of $s\in[\frac{\log \eta}{\log b}, \frac{\log \la}{\log b}]$ there are infinitely many $\om\in(D_b^m)^{\N_0}$ such that $K^\om$ is $s$-Ahlfors regular. We conjecture that this Ahlfors regular property should hold for all $s\in[\frac{\log \eta}{\log b}, \frac{\log \la}{\log b}]$.

\section{Applications in badly approximable numbers, non-recurrence points and joint spectral radius}\label{sec:applications}
In this section we will prove   Theorems \ref{thm:otwillappro} and \ref{thm:notrecurrence} for the  set of badly approximable numbers and non-recurrence points, respectively, and prove Theorem \ref{thm:finiteness property} that the finiteness  property  holds for the joint spectral radius of  $\set{A_{\sd}: \sd\in D_b^m}$.

\begin{lemma}\label{lem:notsharplowerbound}
Suppose that $m \in \N_{\ge 10}$. For any $\om=\om^0\om^1\ldots,\tau=\tau^0\tau^1\ldots \in(D_b^m)^{\N_0}$ we have
\[
\dim_H K^{\om,\tau}\ge  1- \frac{2}{m-2} \Big( \frac{\log m}{\log b} +1 \Big),
\]
where $$K^{\om,\tau}:=\set{ \sum_{i=1}^{\f}\frac{d_i}{b^i}:  d_{k+1}d_{k+2}\ldots d_{k+m} \in D_b^m \setminus \{\om^k, \tau^k\} \quad \forall k \in \N_0 }.$$
\end{lemma}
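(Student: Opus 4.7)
The strategy is to extend the framework of Sections~2--3 to systems with two moving holes. First, an argument analogous to Theorem~\ref{general-result} gives
\[
\dim_H K^{\om,\tau}\ge\liminf_{n\to\infty}\frac{\log|\Sigma_n^{\om,\tau}|}{n\log b},
\]
where $\Sigma_n^{\om,\tau}=\{d_1\ldots d_n\in D_b^n:d_{k+1}\ldots d_{k+m}\notin\{\om^k,\tau^k\}~\forall\,0\le k\le n-m\}$, so it suffices to lower-bound $|\Sigma_n^{\om,\tau}|$. Mirroring Lemma~\ref{lem:bound-cardinality}---subtracting at most $2|\Sigma_k^{\om,\tau}|$ bad one-letter extensions from the $b|\Sigma_{k+m-1}^{\om,\tau}|$ candidates---gives the two-hole recurrence
\[
|\Sigma_{k+m}^{\om,\tau}|\ge b|\Sigma_{k+m-1}^{\om,\tau}|-2|\Sigma_k^{\om,\tau}|\qquad\forall k\in\N_0.
\]

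Next, I would construct a reference pair $(\xi,\xi')\in(D_b^m)^\N\times(D_b^m)^\N$ achieving this recurrence with equality. The required property, a two-hole analogue of ``totally distinct'', is that for every $k$ and every $1\le j\le\min(k,m-1)$ each of $\xi^k_1\ldots\xi^k_{m-j}$ and $\xi'^k_1\ldots\xi'^k_{m-j}$ differs from both of $\xi^{k-j}_{j+1}\ldots\xi^{k-j}_m$ and $\xi'^{k-j}_{j+1}\ldots\xi'^{k-j}_m$, together with $\xi^k\ne\xi'^k$; a greedy construction shows such a pair exists for every $b\ge 3$ provided $m\ge 10$. With this reference in hand, the comparison induction of Lemma~\ref{lem:bound-cardinality-2}---applied with the largest real root $\rho\in(b-1,b)$ of $x^m-bx^{m-1}+2=0$ in place of $\eta$---gives $|\Sigma_k^{\om,\tau}|\ge|\Sigma_k^{\xi,\xi'}|$, and a matrix/Perron argument mirroring Lemma~\ref{lem:bound-A-B} and Proposition~\ref{prop:TD-PO} gives $|\Sigma_k^{\xi,\xi'}|\ge C\rho^k$. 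Combining yields $\dim_H K^{\om,\tau}\ge\log_b\rho$. Since $\rho>b-2/(b-1)^{m-1}$, an elementary estimate for $\log_b(1-\ep)$ then gives $\log_b\rho\ge 1-\frac{2(\log_b m+1)}{m-2}$ for $m\ge 10$, completing the proof.

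\textbf{Main obstacle.} The delicate point is the existence of the doubly totally distinct pair. A naive union bound for the forbidden prefix constraints on $\xi^k$ (and symmetrically on $\xi'^k$) consumes a $2/(b-1)$ fraction of the $b^m$ candidates, which is tight when $b=3$ and leaves no slack in the worst case; one must exploit overlaps among the forbidden prefixes (many of which coincide) or resort to an explicit construction. This is precisely where the hypothesis $m\ge 10$ enters: it provides enough combinatorial room both for the greedy construction and for the final numerical bound to close.
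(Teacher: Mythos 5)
Your plan is genuinely different from the paper's, and the gap lies in your very first step. You propose to extend the machinery of Theorem~\ref{general-result} to the two-hole survivor set and conclude $\dim_H K^{\om,\tau}\ge\liminf_n \frac{\log|\Sigma_n^{\om,\tau}|}{n\log b}$. But the paper's proof of this lower bound rests on Lemma~\ref{lem:key-estimate}, whose key step is the construction of a connecting word $u_1\ldots u_{m-1}$ with each $u_i$ avoiding the two digits $\om^{k-m+i}_m$ and $\om^{k-1+i}_1$ — possible precisely because $b\ge 3$. With two holes, each $u_i$ must avoid up to four digits ($\om^{k-m+i}_m,\tau^{k-m+i}_m,\om^{k-1+i}_1,\tau^{k-1+i}_1$), and for $b=3$ or $b=4$ no valid connecting word need exist. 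Without the analogue of Lemma~\ref{lem:key-estimate}, you have no control on the cylinder counts $|\Sigma_{k+n}^{\om,\tau}(\sc)|$ relative to the global count, and hence no Frostman/mass-distribution argument to convert the growth rate of $|\Sigma_n^{\om,\tau}|$ into a lower bound on Hausdorff dimension. You do not supply a replacement argument, so the plan does not close in the base cases $b\in\{3,4\}$, which are permitted by the paper's standing hypothesis.

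The remainder of your plan is plausible but also under-examined: the recurrence $|\Sigma_{k+m}^{\om,\tau}|\ge b|\Sigma_{k+m-1}^{\om,\tau}|-2|\Sigma_k^{\om,\tau}|$ is correct, the "doubly totally distinct" reference pair does exist by a greedy count (the union bound leaves at least $3$ candidates even when $b=3$), and the comparison induction of Lemma~\ref{lem:bound-cardinality-2} does port over with $\rho=b-2\rho^{-(m-1)}$ replacing $\eta$. But the Perron-type argument needs care since for $b=3$ one has $g(1)=1-3+2=0$, so $x=1$ is always a root of $x^m-bx^{m-1}+2$ and $\rho$ is a Perron but not a Pisot number; the companion matrix then has an eigenvalue on the unit circle and the ``$\|\mathbf{v}B^n\|\to\f$ unless $c_0(\mathbf{v})=0$'' argument must be redone. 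The paper sidesteps all of these issues: it constructs an explicit homogeneous Moran subset of $K^{\om,\tau}$ by freezing $\ell\approx\log_b m$ digits out of every block of $q\approx m/2$ digits, choosing the frozen word $\sd^{(j)}\in D_b^\ell$ so as to simultaneously avoid the $2q<b^\ell$ forbidden subwords coming from both holes, and then reads off $\dim_H\ge 1-\ell/q$ from the Feng--Wen--Wu formula. That construction works uniformly for $b\ge 3$ and needs no connecting-word lemma, no recurrence, and no Perron theory; the role of $m\ge 10$ there is simply to ensure $\ell<q$.
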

\begin{proof}
  We will construct a Moran subset of $K^{\om,\tau}$ to estimate the lower bound of its Hausdorff dimension.

  Choose $q,\ell\in \N$ such that $2q \le m < 2(q+1)$ and $b^{\ell-1} \le m <b^{\ell}$.
  Note that $m \ge 10$ and $b \ge 3$. We have $1 \le \ell < q$.
  For any $j \in \N_0$, noting that $b^\ell> 2q$, we can choose  a word $\sd^{(j)}=d_1^{(j)}d_2^{(j)} \ldots d_\ell^{(j)} \in D_b^\ell$ such that
 \begin{equation}\label{eq:construt-subset}
    \sd^{(j)}\notin  \bigcup_{i=0}^{q-1}\set{\om^{jq+i}_{q+1-i}\om^{jq+i}_{q+2-i}\ldots \om^{jq+i}_{q+\ell-i},~\tau^{jq+i}_{q+1-i}\tau^{jq+i}_{q+2-i}\ldots \tau^{jq+i}_{q+\ell-i}}.
 \end{equation}
 Then we define $$\Lambda :=\big\{ (c_i)_{i=1}^\f \in D_b^\N : c_{(j+1)q+1} c_{(j+1)q+2} \ldots c_{(j+1)q+\ell} = \sd^{(j)} \quad \forall j\in \N_0 \big\}.$$
 Take any $(c_i)_{i=1}^\f \in \Lambda$.
 For $k \in \N_0$, write $k = jq + i$ with $j \in \N_0$ and $i \in \{0,1,\ldots, q-1\}$. By (\ref{eq:construt-subset}) we have \[ c_{k+1}\ldots c_{k+m} = c_{k+1} \ldots c_{k+q-i} d_1^{(j)} \ldots d_\ell^{(j)} c_{k+q+\ell-i+1} \ldots c_{k+m} \not\in \{ \om^k,\tau^k \}.\]
 Then it follows that $\pi_b(\Lambda)  \subset K^{\om,\tau}$.
 Note that the set $\pi_b(\Lambda)$ is a homogeneous Moran set.
 By \cite[Theorem 2.1]{Feng-Wen-Wu-97}
 we obtain $$\dim_H K^{\om,\tau} \ge \dim_H \pi_b(\Lambda) = 1 - \frac{\ell}{q} \ge 1- \frac{2}{m-2} \Big( \frac{\log m}{\log b} +1 \Big),$$
 as desired.
\end{proof}

\begin{proof}[Proof of Theorem \ref{thm:otwillappro}]
Note that $K\left(\set{B_n}\right)=\bigcup_{n=0}^\f K_n$, where
\[
K_n:=\set{x\in[0,1): T_b^k (x)\notin B_k\quad \forall k \in \mathbb{N}_{\ge n}}.
\]
Since $K_n\subset K_{n+1}$ for all $n\in \mathbb{N}_{0}$, by the countable stability of Hausdorff dimension
we have
$$
  \dim_H K\left(\set{B_n}\right)=\sup_{n\ge 0}\dim_H K_n=\lim_{n\to\infty}\dim_H K_n.
$$
For $n\in \mathbb{N}_{0}$ let
\begin{equation*}
\widetilde{K}_n := T_b^n (K_n)=\set{y\in[0,1): T_b^k (y)\notin B_{k+n}\quad \forall k\in \mathbb{N}_{0}}.
\end{equation*}
Then $\dim_H \widetilde{K}_n=\dim_H K_n$ for any $n\in \mathbb{N}_{0}$. So,
\begin{equation}\label{eq:dec3-3}
  \dim_H K\left(\set{B_n}\right)=\lim_{n\to\infty}\dim_H \widetilde{K}_n.
\end{equation}

First we prove (i). Write $r_n = \mathrm{diam}~(B_{n})$ for $n\in \mathbb{N}_0$. Suppose that $r_{n}\to 0 $ as $n\to \infty$. Then for any $m\in\N_{\ge 10}$ we can find $n_0\in\N$ such that $r_{n_0+k} < b^{-m}$ for all $k \in \N_0$.
Take $n \ge n_0$. Then for any $k \in \N_0$ we can find two words $\sc_k, \sd_k\in D_b^m$ such that $B_{k+n} \subset I_{\sc_k} \cup I_{\sd_k}$. It follows that
 \begin{equation}\label{eq:dec3-4}
 \widetilde{K}^{\om,\tau}:=\set{ x \in [0,1) : T_b^k (x) \not\in I_{\sc_k} \cup I_{\sd_k} \quad \forall k \in \mathbb{N}_{0} } \subset \widetilde{K}_n,
 \end{equation}
where $\om=\sc_0\sc_1\sc_2\ldots,\tau=\sd_0\sd_1\sd_2\ldots\in\left(D_b^m\right)^{\N_0}$.
Note that $\widetilde{K}^{\om,\tau}$ and $K^{\om,\tau}$ differ up to a countable set.
By (\ref{eq:dec3-4}) and Lemma \ref{lem:notsharplowerbound}, we obtain that
$$ \dim_H \widetilde{K}_n \ge \dim_H \widetilde{K}^{\om,\tau} =\dim_H K^{\om,\tau} \ge  1 - \frac{2}{m-2} \Big( \frac{\log m}{\log b} +1 \Big). $$
It follows from (\ref{eq:dec3-3}) that
\[ \dim_H K\left(\set{B_n}\right) \ge 1 - \frac{2}{m-2} \Big( \frac{\log m}{\log b} +1 \Big)\quad \forall m\in\N_{\ge 10}. \]
Letting $m \to \f$, we conclude that $\dim_H K\left(\set{B_n}\right) =1$.

Next we prove (ii). Suppose  $\displaystyle\liminf_{n\to\infty} r_n=r >0$. Choose $m \in \N$ such that $2b^{-m} < r$. Then there exists $n_1\in\N$ such that  $r_{n_1+k}> 2b^{-m}$ for all  $k \in \N_0$.
Take  $n \ge n_1$.
For each $k \in \mathbb{N}_{0}$ we can choose $\sd_k \in D_b^m$ such that $I_{\sd_k}\subset B_{k+n}$.
Then we have
\begin{equation}\label{eq:widetilde{K}^om}
\widetilde K_n \subset \set{ x \in [0,1) : T_b^k (x) \not\in \mathrm{int}\big( I_{\sd_k} \big)  \quad \forall k \in \mathbb{N}_{0}} =: \widetilde{K}^\om,
\end{equation}
where $\om=\sd_0\sd_1\sd_2\ldots\in(D_b^m)^{\N_0}$.
Note that the sets $\widetilde{K}^\om$ and $K^\om$ differ up to a countable set.
So, by Theorem \ref{thm:intermediate value} we obtain
$$\dim_H \widetilde{K}_n \le \dim_H \widetilde{K}^\om =\dim_H K^\om \le \frac{\log \la}{\log b} < 1,$$
where $b-1<\lambda<b$ satisfies the equation $x^m -(b-1)(x^{m-1} + \cdots + x +1)=0$. It follows from (\ref{eq:dec3-3})
that $\dim_H K\left(\set{B_n}\right)< 1$, completing the proof.
\end{proof}

\begin{proof}[Proof of Theorem \ref{thm:notrecurrence}]
Note that $E(\phi)=\bigcup_{n=1}^\f E_n(\phi)$, where
\[
 E_n(\phi):=\set{x\in[0,1): |T_b^k (x)-x|\ge \phi(k)\quad \forall k\ge n}.
\]
It is clear that $E_n(\phi)\subset E_{n+1}(\phi)$ for all $n\in \N$. It follows that
\begin{equation}\label{eq:dimension-E}
  \dim_H E(\phi) =\sup_{n\geq 1}\dim_H E_n(\phi)= \lim_{n \to \f} \dim_H E_n(\phi).
\end{equation}

 First  we prove (i). Suppose $\phi(n)\to 0$ as $n\to\infty$. For any $m\in\N_{\geq 10}$ we can find $n_0\in\N$ such that $\phi(n)<b^{-m}$ for all $n\ge n_0$. For $n\ge \max\{m,n_0\}$ we have
   \[ E_n(\phi)\cap [0,b^{-m}]\supset \set{x\in [0,b^{-m}]: T_b^k (x) \not\in [0,2 b^{-m}) \quad \forall k\ge n} \supset \widetilde{E}_n, \]
   where
 \[ \widetilde{E}_n:=\set{\sum_{i=1}^\f\frac{c_i}{b^i}: c_1\ldots c_m=0^m, c_{k+1}\ldots c_{k+m}\notin D_b^m\setminus  \{ 0^m, 0^{m-1} 1 \}\quad \forall k\ge n}. \]
   By Lemma \ref{lem:notsharplowerbound} we obtain that
   \[ \dim_H E_n(\phi) \ge \dim_H E_n(\phi)\cap [0,b^{-m}] \ge \dim_H \widetilde{E}_n\ge \dim_H (T_b^n(\widetilde{E}_n)) \ge 1- \frac{2}{m-2} \Big( \frac{\log m}{\log b} +1 \Big).   \]
  It follows from (\ref{eq:dimension-E}) that
  \[
   \dim_H E(\phi)\ge 1- \frac{2}{m-2} \Big( \frac{\log m}{\log b} +1 \Big)\quad \forall m\in\N_{\ge 10}.
  \]
   Letting $m\to\infty$ we conclude that $\dim_H E\left(\phi\right)=1$ as desired.
   
   Next we prove (ii).  Suppose on the contrary that $\displaystyle\liminf_{n\to\infty}\phi(n)>0$. Note that $\phi(n)>0$ for all $n\in\N$.
Then $r :=\displaystyle\inf_{n \ge 1} \phi(n) >0$.
Choose $m \in \N_{\ge 10}$ such that $b^{-m} < r$.
Then for any word $\sd=d_1d_2\ldots d_m\in D_b^m$, we have for $n\in\N$,
\begin{align*}
 E_n(\phi)\cap I_\sd\subset \left\{x\in [0,1): T_b^k (x) \notin  I_\sd\quad \forall k\geq n\right\}\subset T_b^{-n}(\widetilde{K}^\om) \quad\textrm{with}\quad \om=\sd^\f\in\left(D_b^m\right)^{\N_0},
\end{align*}
where $\widetilde{K}^\om$ is defined in (\ref{eq:widetilde{K}^om}).
    By Theorem \ref{thm:intermediate value} it follows that
    \begin{equation}\label{eq:dec6-1}
       \dim_H (E_n(\phi)\cap I_\sd)\le \dim_H \widetilde{K}^\om=\dim_H K^\om\le \frac{\log \la}{\log b } ,
     \end{equation}
      where $\la\in(b-1,b)$ is the largest real root of $x^m=(b-1)(x^{m-1}+x^{m-2}+\cdots+x+1)$. Note that (\ref{eq:dec6-1}) holds for all $n$ and all $\sd\in D_b^m$. Then by (\ref{eq:dimension-E}) and using $E_n\left(\phi\right)\subset\bigcup_{\sd\in D_b^m} (E_n(\phi)\cap I_\sd)$ we conclude that
      \[
      \dim_H E\left(\phi\right)\le \frac{\log \la}{\log b }<1.
      \]
 \end{proof}

 Recall from (\ref{eq:joint spectral radius}) that the joint spectral radius of $\mathcal A=\set{A_{\mathbf d}: \mathbf d\in D_b^m}$ is given by
 \[ \widehat\rho(\mathcal A)=\lim_{n\to\f}\sup_{\om^0\om^1\ldots\om^{n-1}\in (D_b^m)^n}\|A_{\om^0}A_{\om^1}\cdots A_{\om^{n-1}}\|^{1/n}. \]

\begin{proof}[Proof of Theorem \ref{thm:finiteness property}]
By Lemmas \ref{lem:counting} and \ref{lem:bound-cardinality-2}, there exists a constant $C_2 > 1$ such that for any $\om^0\om^1\ldots\om^{n-1}\in (D_b^m)^n$, we have \[ \|A_{\om^0}A_{\om^1}\cdots A_{\om^{n-1}}\| \le C_2 \la^{n+m-1}. \]
It follows that $\widehat\rho(\mathcal A) \le \la$.

On the other hand, suppose that $\om= \om^{0} \om^{1} \om^{2} \ldots = (\om^0\om^1\ldots \om^{n-1})^\f\in \left(D_b^m\right)^{\N_0}$ is a periodic and progressively overlapping sequence.
By Lemma \ref{lem:counting} and Proposition \ref{prop:TD-PO}, we have
\[
C_2^{-1} \la^{nk+m-1}\le |\Sigma_{nk+m-1}^\om|=\|(A_{\om^0}A_{\om^1}\ldots A_{\om^{n-1}})^k\|\le C_2 \la^{nk+m-1}.
\]
This implies that
\[
\rho(A_{\om^0}A_{\om^1}\ldots A_{\om^{n-1}})^{\frac{1}{n}}=\lim_{k\to\infty}\|(A_{\om^0}A_{\om^1}\cdots A_{\om^{n-1}})^k\|^{\frac{1}{nk}}=\la=\widehat\rho(\mathcal A),
\]
completing the proof.
\end{proof}

\section{Final remarks}\label{sec:final-remarks}
At the end of this paper we make some remarks on our main results Theorems \ref{general-result}, \ref{thm:H=P=B=AOP} and \ref{thm:intermediate value}.

(i) In this paper we assumed  $b\in\N_{\ge 3}$ which excludes the natural base  $b=2$. Indeed, for $b=2$ our proof of Lemma \ref{lem:key-estimate} does not work, since in this case we have less choice for the connecting  word $u_1\ldots u_{m-1}\in D_b^{m-1}$. We believe that our main results Theorems \ref{general-result}, \ref{thm:H=P=B=AOP} and \ref{thm:intermediate value} still hold for $b=2$. 

(ii) The survivor set $K^\om$ studied in this paper involves  one moving hole. Our results can be extended to study   survivor sets with multiple moving holes for sufficiently large $b$ and $m$. More precisely, for $m, M\in\N$, let $\vec{\om}=(\om(1), \om(2),\ldots, \om(M))\in \prod_{\ell=1}^{M}(D_b^m)^{\N_0}$, where each $\om(\ell)=\om^0(\ell)\om^1(\ell)\ldots \in\left(D_b^m\right)^{\N_0}$. Then the survivor set with multiple moving holes $\vec{\om}$ is defined by
$$K^{\vec{\om}}:=\set{ \sum_{i=1}^{\f}\frac{d_i}{b^i}:  d_{k+1}d_{k+2}\ldots d_{k+m} \in D^m_b\backslash \bigcup_{\ell=1}^M\{\om^k(\ell)\} \quad \forall k \in \mathbb{N}_0 }.$$
By the same argument as in the proof of Theorem \ref{general-result} we can show that
\[
\dim_H K^{\vec{\om}}=\underline{\dim}_B K^{\vec{\om}}=\liminf_{n\to\f}\frac{\log\|A_{\vec{\om}^0}A_{\vec{\om}^1}\cdots A_{\vec{\om}^n}\|}{n\log b},
\]
and
\[
\dim_P K^{\vec{\om}}=\overline{\dim}_B K^{\vec{\om}}=\limsup_{n\to\f}\frac{\log\|A_{\vec{\om}^0}A_{\vec{\om}^1}\cdots A_{\vec{\om}^n}\|}{n\log b},
\]
where each $A_{\vec{\om}^j}$ is the adjacency matrix of the subshift of finite type in $(D_b^\N, \si)$ with the set of forbidden words $\set{\om^j(1), \om^j(2),\ldots, \om^j(M)}\subset D_b^m$. In this case, it is interesting to study the dimension equality  results as in Theorem \ref{thm:H=P=B=AOP} and the intermediate value property results as in Theorem \ref{thm:intermediate value}.

Note that if the blocks $\om^j(\ell), j\in\N_0, 1\le \ell\le M$ have different length, but these blocks have finitely many patterns, i.e., $W:=\set{\om^j(\ell): j\in\N_0, 1\le \ell\le M}$ is a finite set, then we can still get analogous results as in Theorem \ref{general-result} by possibly extending the length of short blocks to achieve the maximum block length in $W$.

(iii) Since we are working more on the symbolic space, our main results can be extended to homogeneous self-similar sets in $\R^d$ satisfying the strong separation condition. More precisely, for $b\in\N_{\ge 2}$,  $r\in (0,1/b)$ and $\mathcal D\subset D_b^d=\set{0,1,\ldots, b-1}^d$ with $|\mathcal D|\geq 3$, we define the iterated function system  $\mathcal F$ on $\R^d$ by
\begin{equation*}
  \mathcal{F} :=\left\{f_a(x)=rO(x+a): a\in \mathcal D\right\},
\end{equation*}
where $O$ is an orthogonal  matrix in $\R^d$ so that $\mathcal F$ satisfies the strong separation condition.
For $\om=\om^0\om^1\ldots \in(\mathcal D^m)^{\N_0}$, let
$$ K_{\mathcal{F}}^{\om}:=\set{ \sum_{n=1}^{\f} r^n O^n d_n:  d_{k+1}d_{k+2}\ldots d_{k+m} \in \mathcal D^m\setminus \{\om^k\} \quad \forall k\in \mathbb{N}_{ 0}}.$$
Then by the same argument as in the proof of Theorem \ref{general-result} we obtain
 $$\dim_H K_{\mathcal{F}}^{\om} = \underline{\dim}_B K_{\mathcal{F}}^{\om} =\liminf_{n\to\infty}\frac{1}{-n \log r}\log \|\tilde A_{\om^0}\tilde A_{\om^1}\cdots \tilde A_{\om^{n-1}}\|,$$
 and
 $$\dim_P K_{\mathcal{F}}^{\om} = \overline{\dim}_B K_{\mathcal{F}}^{\om} = \limsup_{n\to\infty}\frac{1}{-n \log r}\log\|\tilde A_{\om^0}\tilde A_{\om^1}\cdots \tilde A_{\om^{n-1}}\|,$$
 where $\tilde A_{\om^j}$ denotes the adjacency matrix of the shift of finite type in $(\mathcal D^\N, \si)$ with the forbidden block $\om^j\in \mathcal D^m$.
 Similarly, the results in  Theorems \ref{thm:H=P=B=AOP} and \ref{thm:intermediate value} can also be extended for $K^\om_{\mathcal F}$ with $b$ replaced by $\frac{1}{r}$.

\section*{Acknowledgements}
The authors thank the anonymous referee for many useful comments which greatly improve the paper, in particular Theorem 1.8. The authors would also thank Natalia Jurga for bringing their attention to  the finiteness conjecture of real matrices, and thank Hanfeng Li and Dejun Feng for some useful  discussions. Moreover, the authors thank Aihua Fan for his suggestion on Remark \ref{rem:dim-equal}. The first author was supported by Chongqing NSF: CQYC20220511052 and Scientific Research Innovation Capacity Support Project for Young Faculty No.~ZYGXQNISKYCXNLZCXM-P2P. The third author was supported by NSFC Nos.~12501110,~12471085 and the China Postdoctoral Science Foundation No.~2024M763857.


\end{document}